\documentclass[a4paper, 11pt]{amsart}
\usepackage{amssymb,amsfonts,amsmath}
\usepackage{hyperref}
\usepackage{todonotes}
\usepackage{comment}
\usepackage{shuffle}
\usepackage[dvipsnames]{xcolor}

\usepackage{bm}
\usepackage{bbm}

\usepackage[margin=25mm]{geometry}

\usepackage[all,cmtip]{xy}
\usepackage{tikz}
\usetikzlibrary{matrix,arrows,decorations.pathmorphing, decorations.pathreplacing, angles,quotes, automata, matrix, positioning, calc, shapes.multipart}
\usepackage{tikz-cd}

 \DeclareFontFamily{U}{wncy}{}
\DeclareFontShape{U}{wncy}{m}{n}{<->wncyr10}{}
\DeclareSymbolFont{mcy}{U}{wncy}{m}{n}
\DeclareMathSymbol{\Sh}{\mathord}{mcy}{"58}

\newcommand{\parabolic}{\mathsf{P}}

\newcommand{\seed}{\Sigma}
\newcommand{\brick}{\mathbf{brick}}
\newcommand{\charlat}{\mathfrak{X}}

\newcommand{\Br}{\mathrm{Br}}

\newcommand{\SL}{\mathrm{SL}}

\newcommand{\borel}{\mathsf{B}}
\newcommand{\unipotent}{\mathsf{U}}
\newcommand{\flag}[1]{\mathsf{B}^{#1}}

\newcommand{\weave}{\mathfrak{w}}
\newcommand{\orich}[1]{\mathsf{R}^{\circ}_{#1}}
\newcommand{\rich}[1]{\mathsf{R}_{#1}}
\newcommand{\rpoly}[1]{\mathrm{R}_{#1}}

\newcommand{\rel}[1]{\xrightarrow{#1}}

\newcommand{\words}{\mathbb{W}}
\newcommand{\rootf}{\mathsf{r}}

\newcommand{\PP}{\mathbb{P}}

\newcommand{\conv}{\operatorname{conv}}

\newcommand{\supp}{\mathrm{supp}}
\newcommand{\word}[1]{\mathbf{#1}}
\newcommand{\wword}[1]{\mathbbm{#1}}
\DeclareMathOperator{\dem}{\delta}
\newcommand{\group}{\mathsf{G}}
\newcommand{\wo}{w_{0}}

\newcommand{\Pic}{\mathrm{Pic}}
\newcommand{\Cl}{\mathrm{Cl}}

\usepackage{mathrsfs}

\newcommand{\C}{\mathbb{C}}
\newcommand{\F}{\mathbb{F}}

\newcommand{\Z}{\mathbb{Z}}

\newcommand{\rank}{\mathrm{rank}}
\newcommand{\BS}{\mathrm{BS}}
\newcommand{\dynkin}{\mathsf{D}}

\newcommand{\wt}{\mathtt{w}}
\newcommand{\dil}[1]{\mathbb{T}_{#1}}
\newcommand{\frozen}[1]{d_{#1}}
\newcommand{\noness}[1]{I_{#1}}

\DeclareMathOperator{\Spec}{Spec}

\newtheorem{theorem}{Theorem}[section]

\newtheorem{proposition}[theorem]{Proposition}%[chapter]
\newtheorem{corollary}[theorem]{Corollary}%[chapter]
\newtheorem{lemma}[theorem]{Lemma}%[chapter]

\newtheorem{notation}[theorem]{Notation}

\theoremstyle{definition}
\newtheorem{remark}[theorem]{Remark}
\newtheorem{definition}[theorem]{Definition}
\newtheorem{example}[theorem]{Example}
\newtheorem{question}[theorem]{Question}

\makeatletter
\makeatletter
\def\@tocline#1#2#3#4#5#6#7{\relax
  \ifnum #1>\c@tocdepth % then omit
  \else
    \par \addpenalty\@secpenalty\addvspace{#2}%
    \begingroup \hyphenpenalty\@M
    \@ifempty{#4}{%
      \@tempdima\csname r@tocindent\number#1\endcsname\relax
    }{%
      \@tempdima#4\relax
    }%
    \parindent\z@ \leftskip#3\relax \advance\leftskip\@tempdima\relax
    \rightskip\@pnumwidth plus4em \parfillskip-\@pnumwidth
    #5\leavevmode\hskip-\@tempdima
      \ifcase #1
       \or\or \hskip 1em \or \hskip 2em \else \hskip 3em \fi%
      #6\nobreak\relax
    \dotfill\hbox to\@pnumwidth{\@tocpagenum{#7}}\par
    \nobreak
    \endgroup
  \fi}
\makeatother

\def\CC{\mathbb{C}}

\title[Torus actions on compactified  braid varieties]{Torus actions on compactified braid varieties and polytopality of subword complexes}

\author{Lara Bossinger}
\email{lara@im.unam.mx}
\address{Universidad Nacional Autónoma de México, Instituto de Matemáticas, Unidad Oaxaca, León 2, centro histórico, 68000 Oaxaca de Juárez, México}

\author{Mikhail Gorsky}
\email{mikhail.gorskii@univ-st-etienne.fr}
\address{Universit\"at Hamburg, Fachbereich Mathematik, Bundesstraße 55, 20146 Hamburg, Germany
\newline 
{\tiny{and}} Institut Camille Jordan UMR 5208, Université Jean Monnet, CNRS, Centrale Lyon, INSA Lyon, Université Claude Bernard Lyon 1, 20, rue Annino, 42023, Saint-Étienne, France}

\author{José Simental}
\email{simental@im.unam.mx}
\address{Universidad Nacional Autónoma de México, Instituto de Matemáticas. Ciudad Universitaria, CDMX, México}

\subjclass{13F60, 14M25, 14M15, 05E99}
\keywords{Cluster algebras, brick varieties, subword complexes, Richardson varieties}

\begin{document}

\begin{abstract}
Every cluster variety admits an action of its cluster dilation group. We prove that, in the case of braid varieties for simple Lie groups, this action always extends to a regular action on each of the brick compactifications. We explore two applications of this result. First, we show that any closed Richardson variety admits a faithful action of a torus of rank the Kazhdan-Lusztig $d$-invariant, answering affirmatively a recent question of E.~Gorsky--S.~Kim--M.~Sherman-Bennett. The same result holds for projected Richardson varieties. Second, we show that the braid variety is a torus if and only if
for each  of its brick compactifications, the polar dual of the moment polytope for this action realizes the corresponding subword complex. The braid words satisfying this property turn out to be precisely the double root free words of V.~Pilaud and C.~Stump. This provides a novel approach to the longstanding open question of the polytopality of spherical subword complexes asked by A.~Knutson and E.~Miller, and in particular gives infinite families of subword complexes admitting polytopal realizations in dimension higher than the rank of the corresponding Coxeter group. As a common consequence of these two applications, we classify all Bruhat intervals in finite crystallographic Coxeter groups which are isomorphic to face lattices of convex polytopes via certain double root free words.
\end{abstract}

\maketitle

\section{Introduction}

\subsection{Actions on cluster varieties} Let $A$ be a Fomin--Zelevinsky cluster algebra \cite{FominZelevinsky-I}. By definition, $A$ comes equipped with a family of generators, called \emph{cluster variables}, which are organized into overlapping subsets of the same finite cardinality called \emph{clusters}. We call an automorphism $\varphi: A \to A$ a \emph{cluster dilation}\footnote{Cluster dilations are often called \emph{cluster automorphisms}, but this term also has a very different meaning, so we decided not to use it. The term \emph{cluster dilations} was coined in \cite{PMS}.} if it sends every cluster variable to a scalar multiple of itself. The set of all cluster dilations forms a group, called the \emph{cluster dilation group} of $A$, and denoted by $\dil{A}$. By the Laurent phenomenon, a cluster dilation is determined by its action in a single cluster, so that in fact $\dil{A}$ is a subgroup of a torus. 

We consider the associated \emph{cluster variety} $\mathcal{A}$. By this, we mean either $\Spec(A)$ or the union of tori given by the nonvanishing of monomials in clusters -- the difference between these two is measured by the \emph{deep locus} \cite{CGSS} and it will not play a role in this introduction. By definition, the group $\dil{A}$ acts on $\mathcal{A}$ faithfully and algebraically. We consider \emph{partial compactifications} $\overline{\mathcal{A}}$ of $\mathcal{A}$. These are quasi-projective varieties together with an embedding $\mathcal{A} \hookrightarrow \overline{\mathcal{A}}$ that identifies $\mathcal{A}$ with a dense, open subset of $\overline{\mathcal{A}}$. We are interested in the following question.

\begin{question}\label{question}
Let $\mathcal{A}$ be a cluster variety, and let $\mathcal{A} \hookrightarrow \overline{\mathcal{A}}$ be a partial compactification. Does the action of $\dil{A}$ on $\mathcal{A}$ extend to an algebraic action on $\overline{\mathcal{A}}$?    
\end{question}

Gross, Hacking, Keel and Kontsevich's framework \cite{GHKK} of compactifying cluster varieties naturally fits this question as is shown in \cite{BCMNC}:
they provide a recipe to construct minimal models of $\mathcal A$-cluster varieties (\emph{i.e.} unions of cluster tori) that are invariant under the action of the cluster dilation group or subtori of it. The key idea is compactifying a cluster variety by building a $\dil{A}$-graded ring with a theta basis using so called \emph{broken line convex sets} \cite{brokenline_convex}.
Grassmannians and partial flag varieties are examples of this construction.

However, the answer to Question \ref{question} is in general \emph{no}. For example, a torus $(\C^{\times})^{n}$ is naturally a cluster variety, with cluster dilation group $\dil{} = (\C^{\times})^{n}$ acting on itself by left multiplication. Thus, given a partial compactification $(\C^{\times})^{n} \hookrightarrow \overline{\mathcal{A}}$, the answer to Question \ref{question} is affirmative if and only if $\overline{\mathcal{A}}$ is in fact a toric variety. Nevertheless, our first main result says that Question \ref{question} has an affirmative answer in many cases of interest.

\begin{theorem}\label{thm:main-1-intro}
    Question \ref{question} has an affirmative answer in the following cases:
    \begin{enumerate}
        \item When $\mathcal{A}$ is a braid variety for a simple Lie group \cite{CGGLSS, GLSBS, GLSB}, and $\overline{\mathcal{A}}$ is a brick compactification \cite{escobar}. 
        \item When $\mathcal{A}$ is an open Richardson variety in the flag variety for a simple Lie group, and $\overline{\mathcal{A}}$ is the corresponding closed Richardson variety.
    \end{enumerate}
\end{theorem}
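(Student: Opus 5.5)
Part (1) I would prove by building the torus action directly on the brick variety and checking that it restricts to the cluster dilation action on the open braid variety. Fix a braid word $\beta=\sigma_{i_1}\cdots\sigma_{i_\ell}$ for the simple group $\group$ with maximal torus $T$. Realize the brick variety as the space of flag sequences $(\borel_0,\borel_1,\dots,\borel_\ell)$ with $\borel_0$ the standard flag, consecutive flags in relative position $e$ or $s_{i_k}$, and $\borel_\ell$ in a fixed relative position $\delta(\beta)$ with respect to $\borel_0$ (or a fixed flag). The braid variety is the open subset where every step is exactly $s_{i_k}$. In the Bott--Samelson coordinates $\borel_k = B_{i_1}(z_1)\cdots B_{i_k}(z_k)\borel$, with $B_i(z)=x_i(z)\dot s_i$, the group $T$ acts by $t\cdot(\borel_k)_k=(t\borel_k)_k$. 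Conjugating $t$ through the product rescales the $z_k$ by characters. This action preserves the closed brick condition, so it is a regular action on the whole brick variety.

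This $T$-action alone is too small: its image in $\dil{A}$ typically has rank at most $\rank\group$. So I would enlarge it to the torus $\widetilde T=T^{\ell+1}$ acting on the Bott--Samelson tower by $(t_0,\dots,t_\ell)$. Here $t_k$ is inserted between $B_{i_k}$ and $B_{i_{k+1}}$ and then absorbed using $t\,x_i(z)\dot s_i = x_i(\alpha_i(t)z)\dot s_i\,(s_i^{-1}ts_i)$. This is regular on the Bott--Samelson variety, and each fibrewise $\PP^1$ coordinate $z_k$ is simply scaled, including at $\infty$. I would then take the subtorus $\widetilde T_\beta$ preserving the condition on $\borel_\ell$, i.e.\ fixing the final flag's relative position data. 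That condition amounts to requiring the accumulated torus element at the end to lie in a suitable stabilizer, which yields a subtorus defined by linear equations on characters. The restriction of $\widetilde T_\beta$ to the open braid variety acts by rescaling each coordinate $z_k$.

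The main step is to identify the image of $\widetilde T_\beta$ with the full cluster dilation group $\dil{A}$. The cluster variables of the braid variety in the initial seed of \cite{CGGLSS, GLSBS} are generalized minors of the Bott--Samelson products, i.e.\ $T$-weight vectors. The action of $\widetilde T_\beta$ therefore rescales every initial cluster variable, and by the Laurent phenomenon every cluster variable; hence it maps into $\dil{A}$. For surjectivity I would compute $\dil{A}$ as the kernel of the exchange matrix, $\{\lambda: \lambda^{\tilde B}=1\}$ on the initial seed, and compare its rank $\#\text{frozen}+\dim\ker$ against the rank of the character lattice of $\widetilde T_\beta$. The plan is to match these rank by rank using the Lusztig-cycle/weave description of the exchange matrix, then check that the map on character lattices is saturated, which also covers the finite part of $\dil{A}$. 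This comparison is the expected obstacle, especially the non-simply-laced folding and torsion issues. As a first attempt I would argue that every dilation is determined by its scalars on the initial cluster and solve for the $t_k$ inductively along the word, mutating through the weave.

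Part (2) then follows by taking $\beta$ to be a reduced word for $w$ followed by one for $v^{-1}w_0$ (or the standard choice $\beta(v,w)$). In that case the braid variety is the open Richardson variety $\orich{v,w}$, and a brick variety maps $T$-equivariantly and birationally onto the closed Richardson $\rich{v,w}$ by forgetting all flags but one. The extended torus acts on the brick variety, and the projection to $\rich{v,w}$ has connected fibres. To descend the action I would use the fact that $\rich{v,w}$ is normal, so a group action on the source commuting with a proper birational map having connected fibres descends regularly. Equivariance on the open part, together with normality via Zariski's main theorem, then gives an action on $\rich{v,w}$ extending the dilation action.
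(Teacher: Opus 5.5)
Your part (1) rests on the claim that $\widetilde T=T^{\ell+1}$ acts regularly on $\BS(\word{i})$, with $t_k$ inserted between the $k$-th and $(k+1)$-st factors and slid to the right, and with each $z_k$ ``simply scaled, including at $\infty$''. This claim is false.

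The insertion only defines a rescaling in the chart $C(\phi_+)$. It does not glue with the other charts $C(\phi)$, because the transition maps \eqref{eq:transition-function} slide the non-toral elements $U_i(z)\in\borel$ through the later factors, and these do not commute with the inserted $t_k$. In other words, the $z_k$ are not global fibre coordinates of the $\PP^1$-bundle tower.

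Concretely, take $\group=\SL_2$ and $\word{i}=11$. The transition $C(++)\dashrightarrow C(-+)$ is $(w_1,w_2)=(z_1^{-1},z_1^2z_2-z_1)$. The scaling $(z_1,z_2)\mapsto(az_1,bz_2)$ then becomes $w_1\mapsto a^{-1}w_1$ and $w_2\mapsto a^2b\,w_2+(a^2b-a)\,w_1^{-1}$. This has a pole along $w_1=0$ unless $ab=1$, i.e.\ unless the scaling comes from the diagonal $T$. Inserting $t_1$ gives $a=1$, $b=\alpha_1(t_1)$, so it is not regular on $\BS(11)\cong\PP^1\times\PP^1$.

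Hence there is no regular action on $\BS(\word{i})$ to restrict. Passing to the subtorus $\widetilde T_\beta$ that preserves the condition on the last flag does not rescue the argument. Whether that subtorus (which you intend to identify with $\dil{\word{i}}$) acts regularly on $\brick(\word{i})$ is exactly the content of the theorem, and nothing in your proposal addresses the boundary charts. The rank and saturation comparison you single out as the main obstacle is not where the difficulty lies: by \eqref{eq:zeta-embedding-1}, $\dil{\word{i}}$ is already realized as a group of scalings of the $z_t$.

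What is missing is a reason why, for every chart $C_{\word{i}}(\phi)$, the coordinates $z_{t,\phi}$ are $\dil{\word{i}}$-homogeneous. These are non-monomial Laurent polynomials in the braid coordinates, such as $z_1^2z_2-z_1$ above. As the example shows, homogeneity fails for general scalings, so it has to come from the cluster structure. The paper obtains it in these steps:
\begin{enumerate}
\item It restricts to charts with $\phi=+$ on essential crossings (Corollary~\ref{cor:not-essential}).
\item It doubles the letters where $\phi=-$ to get $\wword{i}$, and identifies $(\C^\times)^{\#\{t:\phi(t)=-\}}\times X(\word{i})$ with an open subset of $X(\wword{i})$ via trivalent weave moves.
\item It proves $\dil{\wword{i}}\cong\dil{\word{i}}$ (Lemma~\ref{lem:iso-tori}), because each new trivalent vertex is mutable when the doubled letter is non-essential.
\item The chart coordinates are then restrictions of $\dil{\wword{i}}$-homogeneous affine coordinates on $X(\wword{i})$ (Proposition~\ref{prop:grading-one-minus}), so the gradings on all charts are compatible and glue.
\end{enumerate}

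Your part (2) is essentially the paper's argument and is fine once part (1) is in place. Normality of $\rich{v,w}$, together with properness and birationality of $\varpi$, gives $\varpi_*\mathcal{O}_{\brick(\word{i})}=\mathcal{O}_{\rich{v,w}}$, and then Blanchard's lemma applies.
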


Our proof is rather short and explicit, despite the fact that the rank of $\dil{\mathcal{A}}$ may be fairly large, and it does not use any heavy machinery of the minimal model program and nontrivial computations of broken line convex sets. The result suggests that brick varieties and closed Richardson varieties might fit into the setup of \cite{GHKK, BCMNC}. The computations and combinatorics in the Richardson case are typically more complicated if one just looks at them on their own without considering their brick resolutions and the more general class of all braid varieties, so we expect the verification of this for brick varieties to be easier compared to closed Richardson varieties; we plan to address this in future research.

In the rest of the introduction, we provide context for and state several consequences of Theorem \ref{thm:main-1-intro}.

\subsection{Braid and brick varieties} Braid varieties are affine, smooth algebraic varieties that have recently gained prominence due to their connections to contact and symplectic geometry \cite{ACSHLLW, CasalsWeng, CGGS1}, categorical link invariants \cite{CGGS2, Trinh}, Lie theory \cite{CGGLSS}, character varieties \cite{Mellit-character}, mirror symmetry \cite{CGGLSS, CGSS}, among others. Examples of braid varieties include positroid varieties in the Grassmannian \cite{GalashinLam-positroid}, double Bott-Samelson varieties \cite{SW}, reduced double Bruhat cells \cite{FominZelevinsky-Bruhat}, augmentation varieties of certain Legendrian links \cite{CGGS1}, and open Richardson varieties in the flag variety \cite{CGGLSS}. 

Let us quickly review the definition of the braid variety, referring the reader to Section \ref{sec:braid-varieties} for details. We fix a simple algebraic group $\group$, a pair of opposite Borel subgroups $\borel_{+}, \borel_{-} \subseteq \group$, and let $\dynkin$ denote the Dynkin diagram of $\group$. We denote by $W$ the corresponding Weyl group, with generators $s_{i}, i \in \dynkin$, and by $\Br_W$ the braid group, with generators $\sigma_i$, $i \in \dynkin$. Given a word $\word{i} = i_1i_2\dots i_r$ in the alphabet $\dynkin$, the braid variety is defined to be
\[
X(\word{i}) = \{(\flag{0}, \dots, \flag{r}) \in (\group/\borel_{+})^{r+1} \mid \flag{0} = \borel_{+}, \flag{r} = \dem(\word{i})\borel_{+}, \flag{t-1} \rel{s_{i_t}} \flag{t} \; \text{for every} \; t = 1, \dots, r\},
\]
where $\dem(\word{i}) \in W$ is the Demazure product and $\rightarrow$ denotes relative positions of flags. We remark that, up to a canonical isomorphism, $X(\word{i})$ depends only on the braid $\sigma_{i_1}\cdots \sigma_{i_r}$ defined by the word $\word{i}$.

Recently, both \cite{CGGLSS} and \cite{GLSB} constructed a cluster structure on the coordinate algebra $\C[X(\word{i})]$, making $X(\word{i})$ a cluster variety (see also \cite{GLSBS}, that constructs a cluster structure assuming $\group = \SL_n$). By the main result of \cite{CGGSSBS}, these cluster structures coincide. We denote by $\dil{\word{i}}$ the corresponding cluster dilation group. The cluster structure on $X(\word{i})$ is locally acyclic and really full rank (cf. \cite[Theorem 7.13, Corollary 8.5]{CGGLSS}) so the cluster dilation group $\dil{\word{i}}$ is a torus of rank equal to the number of frozen variables, cf. \cite[Proposition 5.1]{LamSpeyer-I}.

The brick variety $\brick(\word{i})$ was introduced by Escobar in \cite{escobar}, see also \cite[Appendix A]{KLS_projections}. The definition of the brick variety is similar to that of $X(\word{i})$, except that, instead of requiring $\flag{t-1} \rel{s_{i_t}} \flag{t}$, we require that either $\flag{t-1} = \flag{t}$ or $\flag{t-1} \rel{s_{i_t}} \flag{t}$. The brick variety $\brick({\word{i}})$ is a smooth projective variety \cite[Theorem 20]{escobar}, and by definition we have an open embedding $X(\word{i}) \hookrightarrow \brick(\word{i})$, so that $\brick(\word{i})$ is a compactification of $X(\word{i})$. 

We remark that the maximal torus $T = \borel_{+}\cap \borel_{-}$ naturally acts on $X(\word{i})$ and this action is by cluster dilations \cite[Corollary 4.26]{CGSS}. The action of $T$ extends naturally to an action on $\brick(\word{i})$. It has nice properties: for example, it is Hamiltonian, and the moment polytope is known \cite{escobar} to be the \emph{brick polytope} of the word $\word{i}$ introduced by Pilaud and Stump \cite{pilaud_stump_15} (generalizing the construction of \cite{pilaud_santos_12} for $\group = \SL_n$). However, the torus $\dil{\word{i}}$ can be significantly larger than $T$. For example, for $\group = \SL_n$, the torus $T$ has rank $n-1$, while $\dil{\word{i}}$ can have dimension up to $\binom{n}{2}$, see \cite[Proposition 6.3]{CGGLSS} and infinite families of examples in Section~\ref{sect:examples}, so Theorem \ref{thm:main-1-intro}(a) says that $\brick(\word{i})$ can have many symmetries that do not come from symmetries of the flag variety itself. This allows to decrease the upper bound on the minimal complexity of $\brick(\word{i})$ as a $\mathsf{T}$-variety\footnote{That is, a normal variety $X$ over $\CC$ with a faithful algebraic action of some algebraic torus $\mathsf{T}$, which has no relation to $T$ in general. Its complexity is the difference $\dim(X) - \rank(\mathsf{T})$. The trivial torus always acts faithfully with the maximal possible complexity $\dim(X)$, so one reasonably looks for actions of tori of higher rank, equivalently, of lower complexity.} from $\dim(\brick(\word{i})) - \rank (T)$ to the number of mutable variables in any given cluster seed in $\CC[X(\word{i})]$. $\mathsf{T}$-varieties naturally generalize toric varieties (which have complexity $0$ by definition), and for varieties of low minimal complexity a lot of tools which are standard and useful in the study of toric varieties admit at least partial generalizations (see \cite{T_varieties} and references therein), so decreasing such an upper bound should prove useful.

We have the following properties of the action $\dil{\word{i}} \curvearrowright \brick(\word{i})$.

\begin{theorem} \label{thm:intro_properties_of_the_action}
The group $\dil{\word{i}}$ acts algebraically and faithfully on $\brick(\word{i})$ with finitely many fixed points. Moreover,
\begin{enumerate}
    \item The fixed points are in bijection with the subwords $\word{j} = i_{j_1}\cdots i_{j_{k}}$ of $\word{i}$ such that $s_{i_{j_1}}\cdots s_{i_{j_k}} = \dem(\word{i})$. 
    \item For every subword $\word{j}$ of $\word{i}$ with $\dem(\word{j}) = \dem(\word{i})$, we have an embedding $\brick(\word{j}) \hookrightarrow \brick(\word{i})$. The images of $\brick(\word{j})$ are $\dil{\word{i}}$-invariant, and their subset for $\word{j}$ of length $r - 1$ spans the Picard group $\Pic(\brick(\word{i}))$. 
    \item The moment polytope of $\brick(\word{i})$ with respect to the $\dil{\word{i}}$-action projects onto (a dilation of) the brick polytope of $\word{i}$.
\end{enumerate}
\end{theorem}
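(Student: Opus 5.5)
Our plan is to reduce everything to an explicit description of the dilation torus $\dil{\word{i}}$ in terms of the braid matrix coordinates, and then to extend this description step by step along the word. Recall that $X(\word{i})$ is parametrized by $(z_1,\dots,z_r)$, with $\flag{t} = B_{i_1}(z_1)\cdots B_{i_t}(z_t)\borel_{+}$. The brick variety $\brick(\word{i})$ is covered by affine charts $U_{\word{j}}$, one for each subword $\word{j}$ with $s_{\word{j}} = \dem(\word{i})$. In the chart $U_{\word{j}}$, the letters outside $\word{j}$ are allowed to degenerate: their step is $\flag{t-1}=\flag{t}$ at the origin of that coordinate. Each $U_{\word{j}}$ is an affine space with coordinates given by braid-matrix-type parameters.

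First, following \cite{CGGLSS, CGSS}, we will use the description of the frozen variables as characters, namely products of generalized minors attached to the letters that are ``essential'' in the Demazure weave. The dilation group $\dil{\word{i}}$ is the torus acting on the frozen variables by independent scalings, with mutable variables scaling by the induced monomial weights. Concretely, we will realize $\dil{\word{i}}$ as a subtorus of $T^{r}$ acting by simultaneous conjugation-type gauge transformations $\flag{t}\mapsto h_t\cdot$ on consecutive flags. Equivalently, each $z_t$ is rescaled by a character $\alpha_{i_t}$ evaluated at a torus element $h_t$. The tuple $(h_t)$ is constrained so that the endpoints remain fixed, with $h_0 = h_r$ up to the $T$-action, and so that $h_t = s_{i_t}(h_{t-1})$ at non-degenerate steps. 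The key claim is that this gauge action, written in each chart $U_{\word{j}}$, acts linearly and diagonally on the coordinates. Hence it is regular on every chart, and the actions agree on overlaps because they agree on the dense open set $X(\word{i})$. This gives the algebraic extension, and faithfulness follows from faithfulness on $X(\word{i})$. The main obstacle is verifying that the cluster dilation torus is \emph{exactly} this gauge torus, and that the degenerate letters, where $\flag{t-1}=\flag{t}$ and the relation $h_t=s_{i_t}h_{t-1}$ fails, still carry a well-defined weight. We expect to handle this by an induction on $r$: deleting the last letter either preserves $\dem$ or not, and we compare rank counts with the number of frozen variables, namely $r-\ell(\dem(\word{i}))$ mutable directions subtracted appropriately.

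For the fixed points in (a), each chart $U_{\word{j}}$ is a $\dil{\word{i}}$-stable affine space with a linear diagonal action. Its origin, the unique point lying in $\brick(\word{j})$ and realizing the $T$-fixed flag sequence of $\word{j}$, is therefore fixed. Finiteness of the fixed locus, and the absence of other fixed points, will follow once we show that all weights on each chart are nonzero. It already suffices that $T\subseteq\dil{\word{i}}$ has this property, since its weights are roots along the walk $s_{\word{j}}$-prefixes. Escobar's description of $T$-fixed points as exactly these origins then gives the bijection.

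For (b), the embedding $\brick(\word{j})\hookrightarrow\brick(\word{i})$ imposes $\flag{t-1}=\flag{t}$ at the deleted positions. This is a coordinate hyperplane in each chart, hence invariant under any diagonal action. The divisors for $\word{j}$ of length $r-1$ are the boundary divisors of the complement of $X(\word{i})$. Since $X(\word{i})$ is affine with trivial Picard group (its coordinate ring is a UFD by the cluster structure, or alternatively since it is a smooth affine variety with $\Pic=0$ via the standard torus-chart argument), the localization sequence $\bigoplus \Z[D]\to\Pic(\brick(\word{i}))\to\Pic(X(\word{i}))=0$ shows that these divisors span. Finally, for (c), the inclusion $T\hookrightarrow\dil{\word{i}}$ induces a restriction of characters. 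Under this restriction, the $\dil{\word{i}}$-moment map composed with the projection gives the $T$-moment map for the restricted linearization. Taking a linearized ample bundle whose $T$-moment polytope is Escobar's brick polytope, possibly after passing to a power to linearize the larger torus, gives the dilation.
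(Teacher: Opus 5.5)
\textbf{The main gap: the extension of the action.} The extension of the $\dil{\word{i}}$-action to $\brick(\word{i})$ is not established, and the mechanism you propose cannot establish it. Your ``gauge torus'' imposes $h_t=s_{i_t}(h_{t-1})$ at every non-degenerate step. On $X(\word{i})$ every step is non-degenerate, so the whole tuple $(h_t)$ is determined by $h_0$. Since $hB_{i}(z)=B_{i}(\alpha_{i}(h)z)\,s_{i}(h)$, the resulting rescaling of $(z_1,\dots,z_r)$ is, up to reparametrization, just the diagonal action of $T$. So the torus you describe is the image of a homomorphism from $T$, of rank at most $\rank T$.

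By contrast, $\rank \dil{\word{i}}$ equals the number of frozen variables and is often larger:
\begin{itemize}
\item for $\word{i}=112211$ (Example~\ref{ex:112211}) it is $3$ versus $2$;
\item for the Bruhat hypercubes of Example~\ref{ex:hypercubes} it is $n2^{n-1}$ versus $2^n-1$.
\end{itemize}
Extending the $T$-action is trivial, since it is the diagonal action on flags. So the ``main obstacle'' you flag is not a technicality: the identification is false, and the argument only recovers the known $T$-action.

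What must actually be proved is that, for every chart $C_{\word{i}}(\phi)$, the coordinates $z_{t,\phi}$ are $\dil{\word{i}}$-homogeneous when written via the sliding transition maps as rational functions of $z_1,\dots,z_r$ on $X(\word{i})$. Your induction on $r$ comparing rank counts does not address this. (Also, $r-\ell(\dem(\word{i}))$ is the total number of cluster variables, not the number of mutable ones.)

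The paper proves the homogeneity as follows:
\begin{itemize}
\item It restricts to charts that are $+$ on essential crossings (Corollary~\ref{cor:not-essential}).
\item It realizes $\C^\times\times X(\word{i})$ as the principal open set $z_{t_0+1}\neq 0$ of $X(\wword{i})$, where $\wword{i}$ doubles the letter $i_{t_0}$.
\item This is a cluster quasi-isomorphism after freezing $z_{t_0+1}$, so the functions entering the transition maps become cluster monomials, hence homogeneous.
\item The decisive input is $\dil{\wword{i}}\cong\dil{\word{i}}$ (Lemma~\ref{lem:iso-tori}). It holds because for non-essential $t_0$ the new trivalent vertex is mutable, so the frozen counts agree, together with a faithfulness argument.
\end{itemize}

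\textbf{Further claims needing repair.}
\begin{itemize}
\item \emph{Charts are not affine spaces.} The chart at the fixed point indexed by $\word{j}=\word{i}=111$ is $X(111)\cong\C^{2}\setminus\{xy=1\}$ (Example~\ref{ex:intro_111}). Your fixed-point argument survives if you use instead that $C_{\word{i}}(\phi)$ is closed in $C(\phi)\cong\C^{r}$, with the torus acting on coordinates by characters whose $T$-components are roots. That is the paper's argument, but it presupposes the homogeneity above.
\item \emph{Picard group.} Your localization-sequence argument is a legitimate and shorter alternative to the paper's route. The paper uses Bia\l ynicki-Birula, surjectivity of $H^2(\BS(\word{i}),\Z)\to H^2(\brick(\word{i}),\Z)$, and restriction of Bott--Samelson boundary divisors. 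Your route needs $\Cl(X(\word{i}))=0$. This is true, but not for the reasons you give: cluster algebras need not be factorial, and the cluster-torus argument only shows that $\Cl(X(\word{i}))$ is generated by components of zero loci of cluster variables. You need a genuine reference for factoriality of $\C[X(\word{i})]$. The argument the paper follows, from the proof of \cite[Lemma 5.29]{CGGLSS}, is itself of this Bott--Samelson type, so you would essentially be outsourcing this step.
\item \emph{Part (3).} $T\to\dil{\word{i}}$ need not be injective, since the center of $\group$ acts trivially. This is harmless, and otherwise your argument for (3) matches the paper's.
\end{itemize}
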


\subsection{Toric brick varieties and subword complexes} We explore applications of Theorem \ref{thm:main-1-intro} when the braid variety $X(\word{i})$ is a torus. In particular, in this case $\brick(\word{i})$ is a smooth toric variety, and $(\brick(\word{i}), \brick(\word{i})\setminus X(\word{i}))$ is a toric pair. The braid variety $X(\word{i})$ is a torus if and only if it does not have mutable variables. Somewhat surprisingly, we show that the braid words $\word{i}$ for which $X(\word{i})$ is a torus are precisely the \emph{double root free} words (see Definition~\ref{def:double root free}), that have already appeared in work of V. Pilaud and C. Stump \cite{PilaudStump-EL} on subword complexes and their flip graphs. 

\begin{theorem}\label{thm:toric-brick-intro}
    Let $\word{i}$ be a braid word. The following conditions are equivalent. 
    \begin{itemize}
\item[(a)] $X(\word{i})$ is an algebraic torus;
\item[(b)] $\mathbb{C}[X(\word{i})]$ is a cluster algebra with no mutable vertices;
\item[(c)] $\brick(\word{i})$ is toric with respect to the action of $\dil{\word{i}}$;
\item[(d)] $(\brick(\word{i}), \brick(\word{i}) \backslash X(\word{i}))$ is a toric pair;
\item[(e)] $\word{i}$ does not contain any subword $\word{j} = i_{j_1}\cdots i_{j_k}$ such that $s_{i_{j_1}}\cdots s_{i_{j_k}} = \dem(\word{i})$ and $\word{j}$ is not reduced;
\item[(f)] $\word{i}$ is double root free.
\end{itemize}
\end{theorem}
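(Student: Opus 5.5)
The plan is to arrange the six conditions in two clusters, the geometric ones (a)--(d) and the combinatorial ones (e)--(f), and then connect the clusters through the fixed-point description of Theorem \ref{thm:intro_properties_of_the_action}(a).

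\textbf{The cluster (a)$\Leftrightarrow$(b).} The cluster structure on $\C[X(\word{i})]$ is locally acyclic and really full rank. If there are no mutable vertices, the cluster algebra is a Laurent polynomial ring in the frozen variables, so $X(\word{i})$ is a torus. Conversely, suppose $X(\word{i})$ is a torus. Then every invertible function is a monomial times a scalar, and the units of $\C[X(\word{i})]$ are generated by the frozen variables. Since the algebra is full rank, the Krull dimension equals the number of frozen variables plus the number of mutable variables. It must also equal the rank of the unit group modulo scalars, which is the number of frozen variables. Hence there are no mutable vertices.

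\textbf{The cluster (b)$\Rightarrow$(c)$\Rightarrow$(d)$\Rightarrow$(a).} If there are no mutable vertices, then $\dil{\word{i}}$ has rank equal to the number of frozen variables, which equals $\dim X(\word{i})$. By Theorem \ref{thm:main-1-intro}(a) and the faithfulness in Theorem \ref{thm:intro_properties_of_the_action}, this action extends to $\brick(\word{i})$. Since $\brick(\word{i})$ is smooth, hence normal, and $X(\word{i})$ is an open dense orbit, $\brick(\word{i})$ is toric; this gives (c). For (d), we need the boundary to be exactly the toric boundary. If $\brick(\word{i})$ is toric for $\dil{\word{i}}$, its open orbit is a torus $O$. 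The subset $X(\word{i})$ is invariant, open and dense, so $O\subseteq X(\word{i})$. Any proper orbit inside $X(\word{i})$ would have dimension less than $\dim X(\word{i})$. The rank count shows $\dil{\word{i}}$ acts on $X(\word{i})$ through a torus of dimension $\dim X(\word{i})$ with generic free orbit. Because $X(\word{i})$ is affine and the frozen variables are units that are $\dil{\word{i}}$-weight vectors, $X(\word{i})$ contains no lower-dimensional orbit: on a smaller orbit some frozen variable would have to vanish. Hence $X(\word{i}) = O$, which gives both (d) and (a).

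\textbf{The link (a)$\Leftrightarrow$(e) via fixed points.} By Theorem \ref{thm:intro_properties_of_the_action}(a), the fixed points of $\dil{\word{i}}$ on $\brick(\word{i})$ correspond to subwords with product $\dem(\word{i})$. For a smooth projective toric variety, the number of fixed points equals the Euler characteristic. Alternatively, I would use the stratification of $\brick(\word{i})$ by the open braid varieties $X(\word{j})$, where $\word{j}$ runs over subwords with $\dem(\word{j})=\dem(\word{i})$. Counting points over $\F_q$ and taking the Euler characteristic, the contribution of each stratum is the Euler characteristic of $X(\word{j})$.

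The cleanest route is to use the known fact that the point count of $X(\word{i})$ is governed by the $R$-polynomial of the Deodhar decomposition: $X(\word{i})$ decomposes into cells $(\C^\times)^a\times \C^b$, indexed by distinguished subexpressions for $\dem(\word{i})$. Here $b$ counts the positions where the subexpression decreases, and these can only occur in non-reduced subwords. The variety is a torus exactly when the only such subexpression is the single reduced one giving the open torus cell. This is precisely (e): any non-reduced subword with product $\dem(\word{i})$ yields another cell. For the converse I would use that a torus has Euler characteristic $0$ but a single cell of maximal dimension.

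This Deodhar analysis is the main obstacle. I expect to check that non-reduced subwords always produce extra Deodhar cells, using the positive distinguished subexpression argument.

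\textbf{The link (e)$\Leftrightarrow$(f).} This is combinatorial. Double root freeness, in the sense of Definition~\ref{def:double root free}, says that no facet of the subword complex has a repeated root in its root function. A repeated root at positions $p<p'$ allows simultaneous insertion of two letters into a reduced complement while keeping the product equal to $\dem(\word{i})$. Conversely, I would take a minimal non-reduced subword with product $\dem(\word{i})$, delete a cancelling pair using the exchange property, and read off a double root. I would carry this out with the root function and flip machinery of Pilaud--Stump.
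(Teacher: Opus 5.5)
Granting the Deodhar decomposition of braid varieties, your direction (e)$\Rightarrow$(a) works. The converse, ``(e) fails $\Rightarrow$ $X(\word{i})$ is not a torus'', is exactly the step you leave open, and the tools you name do not close it.

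First, the Euler characteristic cannot detect tori. For $\word{i}=1111$ in type $A_1$ one has $\#X(\word{i})(\F_q)=(q-1)^3+2q(q-1)$. So $\chi(X(\word{i}))=0$, although $X(\word{i})$ is not a torus. You need the $E$-polynomial (equivalently the point count): if $X(\word{i})\cong(\C^\times)^d$, the cells with a $\C$-factor contribute $\sum(q-1)^aq^b=0$, and each summand is positive for $q>1$, so there are none.

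Second, and more seriously, a non-reduced subword with product $\dem(\word{i})$ need not index a cell. In $1111$ only two of the four non-reduced subwords with product $s_1$ are distinguished. The positive distinguished subexpression only controls the top cell, so a lemma producing a distinguished subexpression with a down step is genuinely missing. One way to supply it:
\begin{itemize}
\item Append a reduced word for $\dem(\word{i})^{-1}w_0$ so that $\dem(\word{i})=w_0$.
\item Expand the coefficient of $H_{w_0}$ in $H_{s_{i_1}}\cdots H_{s_{i_r}}$ (Iwahori--Hecke algebra) over sequences $u_0=e,\dots,u_r=w_0$ with $u_t\in\{u_{t-1},u_{t-1}s_{i_t}\}$. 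The step is forced up whenever $u_{t-1}s_{i_t}>u_{t-1}$, and the weight is $(q-1)^{\#\mathrm{stays}}q^{\#\mathrm{downs}}$.
\item Let $\word{j}$ be non-reduced with $\pi(\word{j})=w_0$, and let $t\notin\word{j}$ be its first forbidden stay, with prefix product $u$. Then $us_{i_t}>u$ makes $s_{i_t}$ a left descent of $u^{-1}w_0$, which is the product of the later letters of $\word{j}$.
\item By strong exchange (valid for non-reduced expressions) you may add $t$ and delete one later letter of $\word{j}$, keeping product and length.
\item The first violation moves strictly right. You therefore reach an admissible sequence with $|\word{j}|$ non-stay steps, hence with $(|\word{j}|-\ell(w_0))/2\ge 1$ downs.
\end{itemize}
With this lemma your route is a genuine alternative to the paper's cluster-theoretic argument. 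There, Lemma~\ref{lem:criterion for mutables} gives a non-Demazure-frozen trivalent vertex for such $\word{j}$. Proposition~\ref{prop:no mutables in subwords} transports the mutable variable from $X(\word{j})$ to $X(\word{i})$ by cyclic rotation and extension of weaves. Under (e), every trivalent vertex is Demazure-frozen via Lemma~\ref{lem:braid_invariance_property_2}. Your route avoids cyclic rotation but imports the Deodhar decomposition.

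Two further steps need repair. In (f)$\Rightarrow$(e), deleting a cancelling pair from a minimal non-reduced subword $\word{i}(u)\,a\,\word{i}(v)\,b\,\word{i}(w)$ only gives $v(\alpha_b)=\pm\alpha_a$, i.e.\ roots $\beta,\pm\beta$ at the two positions. The minus sign does occur: for $111$ and the facet $\{1,3\}$ the roots are $\alpha_1,-\alpha_1$. The paper handles this by noting that exactly one position of the reduced complement carries the root $\pm\beta$. The three positions then span a triangle in the subword complex, and its leftmost (positive greedy) facet has two equal positive roots by \cite[Lemma 2.6]{BergeronCeballos}.

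In (c)$\Rightarrow$(d), you claim a smaller orbit in $X(\word{i})$ forces a frozen variable to vanish. This presupposes that the frozen weights span the character lattice of $\dil{\word{i}}$ rationally, which you have not shown and which fails in general. In $X(111)\cong\C^2\setminus\{xy=1\}$ the frozen variable is a unit, hence a scalar times a power of $xy-1$, and so has weight $0$; yet the origin is a fixed point. Use the rank count instead, as the paper does. Condition (c) gives $\rank\dil{\word{i}}=\dim X(\word{i})$, which is the number of all variables in a seed. But $\rank\dil{\word{i}}$ equals the number of frozen variables, so there are no mutable ones, and (b), hence (a) and (d), follow.
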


We remark that $\brick(\word{i})$ can be a toric variety even when $X(\word{i})$ is not a torus.

\begin{example}
\label{ex:intro_111}
    Consider the word $\word{i} = 111$. Note that $\word{i}$ manifestly does not satisfy the conditions (e), (f) of Theorem \ref{thm:toric-brick-intro}. In this case, the brick variety is $\brick({\word{i}}) \cong \mathbb{P}^1 \times \mathbb{P}^1$, while $X(\word{i}) \cong \C^2 \setminus \{xy = 1\}$, cf. \cite[Example 4.16]{CGGS2}, which is not a torus. In fact, $X(\word{i})$ is a cluster variety of type $A_1$ with one frozen variable, so $\dil{\word{i}} \cong \C^{\times}$, and the action is given by $t.(x,y) = (tx, t^{-1}y)$. The brick variety $\brick(\word{i})$ is toric with respect to the larger torus $(\C^{\times})^{2}$, but this torus does not preserve the braid variety $X(\word{i}) \hookrightarrow \brick(\word{i})$. 
\end{example}

The double root free condition is designed, in a reasonable sense, precisely to prevent the word $\word{i}$ from having the words $111$ as subwords of its projections to generalized parabolic subgroups of $\group$.

\bigskip

The equivalence (a) $\Longleftrightarrow$ (d) can be alternatively established using the notion of \emph{log Calabi--Yau pairs of cluster type} introduced in \cite{EFM}. We prove in Proposition~\ref{prop:brick_cluster_pair} that 
the pair $(\brick(\word{i}), \brick(\word{i}) \backslash X(\word{i}))$ is of cluster type for an arbitrary $\word{i}$, following a suggestion from the work \cite{EFMS}. By \cite[Theorem 1.10]{EFM}, this does imply that such a pair is toric if and only if $X(\word{i})$ is a torus, albeit in a rather indirect way. We stress out that the rest of Theorem~\ref{thm:toric-brick-intro} and the more general Theorem~\ref{thm:intro_properties_of_the_action} do not follow from this directly.

\bigskip

We apply Theorem \ref{thm:toric-brick-intro} to the problem of finding polytopal realizations of subword complexes. Recall that, given a braid word $\word{i} = i_1\cdots i_r$ and an element $w \in W$, the subword complex $\Delta(\word{i}, w)$ is the simplicial complex on the vertex set $[r]$ whose faces consist of those subwords $\word{j}$ of $\word{i}$ whose complement $\word{i} \setminus \word{j}$ contains a reduced word for $w$. These complexes have been introduced by Knutson and Miller \cite{knutson_miller_04, knutson_miller_05} to study Gr\"obner degenerations of matrix Schubert varieties. They proved several nice general properties of subword complexes. In particular for $w = \dem(\word{i})$, as a topological manifold with boundary, $\Delta(\word{i}, \dem(\word{i}))$ is shellable and homeomorphic to a sphere. In this case, the dual complex is precisely the intersection complex of divisors in the boundary of the brick compactification $\brick(\word{i})$ of the braid variety $X(\word{i})$. Subword complexes have been also related to the fibers of exponentiation maps to totally nonnegative spaces \cite{DHM}, representations of quivers over $\mathbb{F}_1$ and their Hall algebras \cite{GoLi}, Hopf algebras and comultiplication in the coordinate rings of unipotent groups and Mirković-Vilonen polytopes \cite{BergeronCeballos, Zijun1}, and finite type cluster algebras \cite{pilaud_stump_15, JLS}. 

In \cite[Question 6.4]{knutson_miller_04}, Knutson and Miller ask whether every spherical subword complex $\Delta(\word{i}, \delta(\word{i}))$ can be realized as a convex polytope. Many spherical subword complexes for words in Coxeter groups of rank 1 and 2 can be realized via simplices or certain cyclic polytopes, see \cite[Section 6]{cls}. Pilaud and Stump \cite{pilaud_stump_15} found realizations for a class of words called \emph{root independent}. In our terms, these are precisely the words for which the natural map from $T = \borel_+ \cap \borel_-$ to $\dil{\word{i}}$ is surjective. 

In this case, the work \cite{pilaud_stump_15} shows that the polar dual of the brick polytope of $\word{i}$ realizes $\Delta(\word{i}, \delta(\word{i}))$.
A very important and most studied family of brick polytopes in the root independent case are the \emph{generalized associahedra}, closely related to the structure of cluster algebras associated to orientations of $\dynkin$. In this case, many interesting polytopal realizations of subword complexes exist, see the survey \cite{PSZ} and references therein. 
Still, as discussed earlier, root independency is a very restrictive condition. 

A fairly ad hoc approach to the problem of polytopality has been attempted in \cite{manneville} following \cite{gorsky_edge, G_subword_3}: these works studied transformations of subword complexes induced by certain natural operations on words. Some of these transformations preserve polytopality, but some do not, and checking whether a given subword complex can be obtained from a polytopal subword complex using only polytopality-preserving operations of this kind is complicated and has brought only very limited success.

By \cite[Corollary 2.16]{cls}, polytopality of all spherical subword complexes is equivalent to polytopality of a special family called \emph{generalized multi-associahedra}, and a lot of work since then concentrated in this direction. For $k = 1$, generalized $k$-associahedra are the usual generalized associahedra. Beyond small rank, despite much effort, only few explicit examples and no infinite families of polytopal realizations of generalized $k$-associahedra with $k > 1$ have been found to date, see \cite{CRS} and references therein. We do not make progress on the problem of polytopal realizations of multi-associahedra. Instead, we get back to more general subword complexes and show that our results allow to find realizations via brick varieties in much larger generality than that of root independent words. Namely, we show the polytopality of double root free subword complexes.

\begin{theorem}\label{thm:subword-complex-intro}
A word $\word{i}$ is double root free, so that $\brick(\word{i})$ is toric with respect to the action of $\dil{\word{i}}$, if and only if the polar dual of the moment polytope of $\brick(\word{i})$ with respect to the $\dil{\word{i}}$-action realizes the subword complex $\Delta(\word{i}, \dem(\word{i}))$. 
\end{theorem}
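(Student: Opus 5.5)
The plan is to prove the two directions separately, using Theorem \ref{thm:toric-brick-intro} and Theorem \ref{thm:intro_properties_of_the_action}.

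For the forward direction, assume $\word{i}$ is double root free. By Theorem \ref{thm:toric-brick-intro}, $\brick(\word{i})$ is then a smooth projective toric variety for $\dil{\word{i}}$, which has rank $\dim \brick(\word{i})$. Its toric boundary is $\brick(\word{i})\setminus X(\word{i})$, by item (d).

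Choose an ample $\dil{\word{i}}$-linearized line bundle; this is possible since $\brick(\word{i})$ is projective, and the divisors $\brick(\word{j})$ with $\word{j}$ of length $r-1$ span $\Pic$. The moment polytope $P$ is then a full-dimensional simple lattice polytope. Its normal fan is the fan of $\brick(\word{i})$, and its face lattice is anti-isomorphic to the orbit-closure poset. So the polar dual $P^{\circ}$ is a simplicial polytope whose boundary complex is the nerve of the toric boundary divisors.

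Next I would identify this nerve with $\Delta(\word{i},\dem(\word{i}))$. The boundary divisors are $D_t=\brick(\word{i}\setminus t)$ for those $t\in[r]$ with $\dem(\word{i}\setminus t)=\dem(\word{i})$, i.e. the vertices of the subword complex. By Theorem \ref{thm:intro_properties_of_the_action}(b), the intersection $\bigcap_{t\in J} D_t$ is $\brick(\word{i}\setminus J)$ when $\dem(\word{i}\setminus J)=\dem(\word{i})$, and is empty otherwise. In the first case $\word{i}\setminus J$ contains a reduced word for $\dem(\word{i})$, so $J$ is a face. Hence the face lattices agree, including the empty face and the facets. The facets correspond to the torus fixed points, which match Theorem \ref{thm:intro_properties_of_the_action}(a), since by (e) the complements of facets are reduced words for $\dem(\word{i})$.

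For the converse, suppose the polar dual of the moment polytope $P$ realizes $\Delta(\word{i},\dem(\word{i}))$. This sphere has dimension $\dim\brick(\word{i})-1$: its facets have $r-\ell(\dem(\word{i}))=\dim X(\word{i})$ vertices. So $P$ has dimension $\dim \brick(\word{i})$. Since $P$ spans an affine space of dimension at most $\rank\dil{\word{i}}$, we get $\rank \dil{\word{i}}\ge\dim\brick(\word{i})$. Faithfulness of the action then forces a dense orbit, so $\brick(\word{i})$ is toric for $\dil{\word{i}}$, and item (c) together with Theorem \ref{thm:toric-brick-intro} gives double root freeness.

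An alternative route for the converse is a vertex count. The vertices of $P$ are images of fixed points, which by Theorem \ref{thm:intro_properties_of_the_action}(a) are indexed by all subwords with product $\dem(\word{i})$. The facets of $\Delta$ correspond only to reduced such subwords. Equality of the two counts forces condition (e).

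The main obstacle is making the converse rigorous. The moment polytope could a priori have dimension smaller than $\dim \brick(\word{i})$ while its polar dual still has the right combinatorics in a degenerate sense. I would handle this by using the dimension of the realization as above. In the forward direction, I must also ensure that distinct fixed points map to distinct vertices. This follows from ampleness in the toric setting.
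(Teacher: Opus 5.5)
Your proof is correct. The forward direction is essentially the paper's argument for (c) $\Rightarrow$ (i) in Theorem \ref{thm:braid-is-torus}: once $\brick(\word{i})$ is $\dil{\word{i}}$-toric, its orbits are the strata $\brick(\word{i})^{\word{j}}\cong X(\word{j})$, and Escobar's stratification identifies the orbit-closure poset with the dual of $\Delta(\word{i},\dem(\word{i}))$. One citation needs fixing. The formula $\bigcap_{t\in J}D_t=\brick(\word{i}\setminus J)$ (or $\emptyset$) comes from the closure relations in Theorem \ref{thm:escobar_stratification}. It does not come from item (b) of Theorem \ref{thm:intro_properties_of_the_action}, which only gives invariance and the spanning of $\Pic$.

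For the converse your route differs from the paper's. The paper uses Lemma \ref{lem:fixed_points}: a non-reduced subword $\word{j}$ with $\pi(\word{j})=\dem(\word{i})$ gives an extra fixed point. From this it concludes that the moment polytope has more vertices than $\Delta(\word{i},\dem(\word{i}))$ has facets.

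You instead argue by dimension. A realization forces $\dim P=r-\ell(\dem(\word{i}))=\dim\brick(\word{i})$. On the other hand $\dim P\le\rank\,\dil{\word{i}}$, which is the number of frozen variables. Hence every cluster variable is frozen. This is the same count the paper uses for (e) $\Rightarrow$ (b), and it is the more robust argument. When the action is not toric, nothing forces distinct fixed points to map to distinct vertices of $P$.

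The word $\word{i}=111$ shows this concretely. The rank-one torus $\dil{\word{i}}$ has four fixed points, but $P$ is a segment with only two vertices. That is fewer than the three facets of $\Delta(111,s_1)$, so a pure vertex count gives no contradiction. The dimension mismatch (segment versus triangle) does.

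Your fallback vertex-count route has exactly this defect. You should drop it, or reduce it to the dimension argument.

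What the fixed-point approach buys is a combinatorial explanation of the failure: the extra fixed points are indexed by non-reduced subwords for $\dem(\word{i})$. Your argument buys a proof that needs no information about where the fixed points land.
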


In the root independent case, the action of the cluster dilation group $\word{i}$ coincides with the action of the maximal torus $T$, and Theorem \ref{thm:subword-complex-intro} follows from works of Escobar \cite{escobar} and Pilaud-Stump \cite{pilaud_stump_15}. We note that in case of generalized associahedra, a different approach to polytopal realizations first appeared in the physics literature \cite{ABHY} and then was generalized in terms of representation theory of finite-dimensional algebras \cite{BMCLDMTY, PPPP}\footnote{The relevant setup was further axiomatized in \cite{GNP2}: the fans whose polytopal realizations are of interest generalize \emph{$g$-vector fans} of cluster algebras and are known as \emph{$g$-vector fans of 0-Auslander extriangulated categories}, see \cite{BorveKaipel} for the most general framework to date.}. These realizations were recently interpreted as moment polytopes \cite{GT}. Our approach is the opposite: we start from a toric variety and then check that the polar dual of its moment polytope realizes the subword complex.

We note that Theorem~\ref{thm:subword-complex-intro} and Thereom~\ref{thm:intro_properties_of_the_action}.(3) together imply that for double root free words, the subword complexes $\Delta(\word{i}, \dem(\word{i}))$ admit polytopal realizations whose polar duals project onto brick polytopes. This property does not hold for arbitrary words, failing already in such small examples as $\word{i} = 1212121$, cf. \cite[Proposition 5.9]{pilaud_santos_12}. 

Further, for the word $\word{i} = 111$ from Example~\ref{ex:intro_111}, the brick variety is isomorphic to $\mathbb{P}^1 \times \mathbb{P}^1$, while the subword complex is the boundary complex of the triangle, so the toric variety, the polar dual of whose moment polytope can realize the subword complex, is $\mathbb{P}^2$. All of this suggests that double root free case is indeed the largest reasonable generality where brick varieties can be used to realize spherical subword complexes, at least without using nontrivial elementary transformations.

\subsection{Richardson varieties} Now let $v, w \in W$ be elements of the Weyl group of $\group$. These determine a Bruhat cell $C_w$ and an opposite Bruhat cell $C^v$ in the flag variety $\group/\borel_{+}$. The \emph{open Richardson variety is}
\[
\orich{v,w} = C^v \cap C_w \subseteq \group/\borel_{+},
\]
and the closed Richardson variety $\rich{v,w}$ is simply the closure of $\orich{v,w}$ inside $\group/\borel_{+}$. Obviously, $\rich{v,w}$ is a compactification of $\orich{v,w}$. We note that $\orich{v,w}$ is nonempty if and only if $v \leq w$ in the Bruhat order, and in this case $\rich{v,w} = \bigsqcup_{v \leq v' \leq w' \leq w} \orich{v',w'}$, so that open Richardson varieties form a natural stratification of the flag variety $\group/\borel_{+}$. Open Richardson varieties appear in many areas of Lie theory, for example, the Kazhdan-Lusztig $R$-polynomial \cite{KazhdanLusztig} is the point-count of $\orich{v,w}$ over the finite field $\mathbb{F}_q$, and the homology of open Richardson varieties computes extension groups between Verma modules, see e.g. \cite{GalashinLam-qt}. In fact, every open Richardson variety is isomorphic to a braid variety \cite[Theorem 3.14]{CGGLSS}, so in particular $\orich{v,w}$ admits a cluster structure. The rank of the cluster dilation group is in fact the Kazhdan-Lusztig $d$-invariant $d_{v,w}$ \cite{BarkleyGaetzLam, patimo}. We denote this group by $\dil{v,w}$. The following result is analogous to Theorem \ref{thm:intro_properties_of_the_action}. We remark, however, that closed Richardson varieties do not coincide with brick varieties: instead, brick varieties for certain words provide resolutions of closed Richardson varieties, which can be singular.

\begin{theorem}\label{thm:richardson-intro}
    The group $\dil{v,w}$ acts algebraically and faithfully on the closed Richardson variety $\rich{v,w}$ with finitely many fixed points. Moreover,
    \begin{enumerate}
        \item The fixed points are in bijection with the elements of the Bruhat interval $[v,w]$.
        \item For every $v \leq v' \leq w' \leq w$, we have an embedding $\rich{v',w'} \hookrightarrow \rich{v,w}$. The image of $\rich{v',w'}$ is $\dil{v,w}$-invariant. 
        \item The moment polytope of $\rich{v,w}$ with respect to the $\dil{v,w}$-action projects onto (a dilation of) the Bruhat interval polytope of $[v,w]$. 
    \end{enumerate}
\end{theorem}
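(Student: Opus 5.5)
We will deduce the statement from Theorem~\ref{thm:intro_properties_of_the_action} applied to a suitable braid word. Concretely, choose reduced words $\word{v}$ for $v^{-1}w_0$ (or a suitable variant) and $\word{w}$ for $w$ so that, by \cite[Theorem 3.14]{CGGLSS}, $\orich{v,w}\cong X(\word{i})$ for the concatenated word $\word{i}$. We will also use the standard fact \cite[Appendix A]{KLS_projections} that the map $\pi:\brick(\word{i})\to \group/\borel_{+}$ is a resolution of $\rich{v,w}$ extending this isomorphism. Concretely, $\pi$ forgets all flags except one intermediate flag. Under this identification the cluster dilation group $\dil{v,w}$ is $\dil{\word{i}}$, which acts on $\brick(\word{i})$ by Theorem~\ref{thm:intro_properties_of_the_action}.

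The key step is to descend the action along $\pi$. Since $\rich{v,w}$ is normal (Richardson varieties are normal with rational singularities) and $\pi$ is proper birational with connected fibres, we have $\pi_*\mathcal{O}_{\brick(\word{i})}=\mathcal{O}_{\rich{v,w}}$. It therefore suffices to show that $\dil{\word{i}}$ permutes the fibres of $\pi$; an action preserving the fibres of such a contraction descends to a regular action on the base. Equivalently, if $\mathcal{L}$ is the pullback of an ample line bundle from $\group/\borel_+$, one can linearize $\mathcal{L}$ (possibly after passing to a power) and take $\mathrm{Proj}$ of the invariant section ring, which recovers $\rich{v,w}$.

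To see that fibres are preserved, we will check that each exceptional stratum of $\pi$ is a union of the $\dil{\word{i}}$-invariant subvarieties $\brick(\word{j})$ from Theorem~\ref{thm:intro_properties_of_the_action}(2). We then argue that on each stratum $\pi$ restricts to the projection of a smaller brick/braid variety onto the open Richardson $\orich{v',w'}$. The required equivariance is that the dilation action on $\brick(\word{j})$ is compatible with the restriction map $\dil{\word{i}}\to\dil{\word{j}}$, where $\dil{\word{j}}$ acts on $X(\word{j})\cong \orich{v',w'}\times(\text{fibre})$. We expect this compatibility of the cluster dilations with the fibration structure on the strata to be the main obstacle. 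The first thing to try is the explicit description of dilations via frozen variables, i.e.\ generalized minors of the intermediate flag, checking that these minors are $\dil{\word{i}}$-semi-invariant.

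The remaining claims follow from this. Faithfulness holds because the action is already faithful on the dense open $\orich{v,w}$. For (1), the fixed points are images of the brick fixed points. These map to $T$-fixed flags $u\borel_+$ lying in $\rich{v,w}$, so $u\in[v,w]$, and each such $u$ is attained. Finiteness follows because the fibre over $u$ is a single point under the descended action, as $\dil{\word{i}}\supseteq$ image of $T$. For (2), $\rich{v',w'}$ is the image under $\pi$ of invariant brick subvarieties, hence invariant. For (3), the moment polytope is the image of that of $\brick(\word{i})$ for the pulled-back bundle. Projecting along the inclusion $T\to\dil{v,w}$ gives the $T$-moment polytope of $\rich{v,w}$, which is the convex hull of $\{u\lambda : u\in[v,w]\}$, namely the Bruhat interval polytope.
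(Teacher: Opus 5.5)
Your architecture is the paper's. You resolve $\rich{v,w}$ by $\varpi\colon\brick(\word{i})\to\rich{v,w}$ with $\word{i}=\word{i}(w)\word{i}(v^{-1}w_0)$. You use $\varpi_*\mathcal{O}_{\brick(\word{i})}=\mathcal{O}_{\rich{v,w}}$ to push the $\dil{\word{i}}$-action down; your normality plus Zariski argument for this is fine, while the paper cites \cite[Theorem A.3]{KLS_projections}. Then you read off (2) from the invariant $\brick(\word{j})$, (1) from $T$-fixed points, and (3) from $T\to\dil{v,w}$.

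The gap is the descent step itself, which is the whole content of the extension claim. You reduce it to showing that $\dil{\word{i}}$ permutes the fibres of $\varpi$, propose a stratum-by-stratum check, and leave it as ``the main obstacle''. So as written the proposal does not prove that the action extends. The stratum plan also relies on compatibilities you never establish and on a product structure that fails in general: a stratum $X(\word{j})$ need not map onto a single $\orich{v',w'}$. For example, take $\word{i}=121121$ and the subword $11121$ at positions $1,3,4,5,6$. On $X(11121)$ the intermediate flag ranges over $\{\borel_+\}\sqcup\orich{e,s_1}$.

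The missing idea is that no fibre analysis is needed. $\dil{\word{i}}$ is a torus, hence connected. Blanchard's lemma \cite[Theorem 7.2.1]{Brion_structure} says: if a connected algebraic group acts on $X$ and $f\colon X\to Y$ is proper with $f_*\mathcal{O}_X=\mathcal{O}_Y$, then there is a unique action on $Y$ making $f$ equivariant. Preservation of fibres is forced by connectedness via the rigidity lemma, and this is exactly the paper's proof.

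Your ``equivalent'' section-ring route would also work, but not as stated, since Proj of the \emph{invariant} section ring is a GIT quotient, not $\rich{v,w}$. What you want is the full ring $\bigoplus_k H^0(\brick(\word{i}),\varpi^*\mathcal{L}^{k})\cong\bigoplus_k H^0(\rich{v,w},\mathcal{L}^{k})$, which follows from the projection formula. $\dil{\word{i}}$ acts on it once a power of $\varpi^*\mathcal{L}$ is linearized. Its Proj is $\rich{v,w}$ with an action making $\varpi$ equivariant, again with no fibre check.

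Once the action exists, your deductions of (1)--(3) are essentially the paper's, with two small fixes for (1):
\begin{enumerate}
\item $\dil{v,w}$-fixed points are $T$-fixed because the descended action, pulled back along $T\to\dil{v,w}$, agrees with the usual $T$-action on the dense open $\orich{v,w}$. Hence the fixed points lie among the finitely many $u\borel_+$ with $u\in[v,w]$.
\item Each such point is fixed because $\{u\borel_+\}=\rich{u,u}$ is invariant by (2).
\end{enumerate}
Your stated reason for finiteness (``the fibre over $u$ is a single point'') is not meaningful and is not needed.
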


As with Theorem \ref{thm:intro_properties_of_the_action}, the torus $T = \borel_{+} \cap\borel_{-}$ acts on $\orich{v,w}$ by cluster dilations, and the action of $T$ clearly extends to $\rich{v,w}$. However, the rank of the torus $\dil{v,w}$ can be significantly larger than that of $T$, cf. \cite{hypercubes}. So the closed Richardson variety $\rich{v,w}$ admits many intrinsic symmetries that do not come from symmetries of the flag variety. 
Note that Theorem \ref{thm:richardson-intro} answers \cite[Question 1.6]{GKSB} affirmatively, even beyond type $A$. 

\subsection{Toric Richardson varieties and Bruhat intervals} We use Theorem \ref{thm:richardson-intro} to characterize those pairs $(v,w)$ for which the closed Richardson variety $\rich{v,w}$ is toric, generalizing  \cite[Theorem 1.2]{GKSB} beyond type $A$. 

\begin{theorem}\label{thm:toric-richardson-intro}
Let $v \leq w$ in Bruhat order. Let $\word{i}(w)$ be a reduced word for $w$, and $\word{i}(v^{-1}w_0)$ a reduced word for $v^{-1}w_0$. The following are equivalent.
\begin{itemize}
    \item[(a)] The open Richardson variety $\orich{v,w}$ is a torus.
    \item[(b)] The closed Richardson variety $\rich{v,w}$ is toric.
    \item[(c)] The Bruhat interval does not contain a subinterval isomorphic to $S_3$.
    \item[(d)] The Bruhat interval $[v,w]$ is a lattice.
    \item[(e)] The interval poset $\mathrm{Int}_{v,w} := \{[v',w'] \mid [v', w'] \subseteq [v,w]\}\cup\{\emptyset\}$, ordered by inclusion, is a lattice.
    \item[(f)] The Bruhat interval $[v,w]$ is the face lattice of a convex polytope.
    \item[(g)] The interval poset $\mathrm{Int}_{v,w}$ is the face lattice of a convex polytope.
    \item[(h)]  The concatenation $\word{i}(w)\word{i}(v^{-1}w_0)$ is double root free. 
    \item[(i)] The Kazhdan-Lusztig $d$-invariant is $d_{v,w} = \ell(w) - \ell(v)$.
\end{itemize}
\end{theorem}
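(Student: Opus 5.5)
The plan is to route everything through the braid word $\word{i} := \word{i}(w)\word{i}(v^{-1}w_0)$. By \cite[Theorem 3.14]{CGGLSS}, $X(\word{i}) \cong \orich{v,w}$, and $\dem(\word{i}) = w_0$. Under this identification $\rich{v,w}$ is the image of $\brick(\word{i})$ under the map remembering a single flag. This gives (a) $\Leftrightarrow$ (h) directly from Theorem~\ref{thm:toric-brick-intro} (a) $\Leftrightarrow$ (f).

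For (i): the rank of $\dil{v,w}$ is $d_{v,w}$, while $\dim \orich{v,w} = \ell(w)-\ell(v)$. The cluster variety is a torus exactly when there are no mutable vertices, i.e.\ when the number of frozen variables equals the dimension. This gives (a) $\Leftrightarrow$ (i).

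For (a) $\Rightarrow$ (b): by Theorem~\ref{thm:richardson-intro}, $\dil{v,w}$ acts faithfully on $\rich{v,w}$. If $\orich{v,w}$ is a torus, this action has a dense orbit, so $\rich{v,w}$ is toric, since Richardson varieties are normal. For (b) $\Rightarrow$ (a), I would argue via boundary divisors. In the toric case, the $\dil{v,w}$-invariant boundary $\rich{v,w}\setminus\orich{v,w}$, which is the union of the $\rich{v',w'}$ from Theorem~\ref{thm:richardson-intro}(2), has to be contained in the toric boundary. A dimension count on a torus acting with finitely many fixed points then forces the acting torus to have rank $\ell(w)-\ell(v)$. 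Alternatively, I would prefer to invoke the cluster-type log Calabi--Yau argument of \cite{EFM} for the pair $(\rich{v,w}, \partial)$, mirroring the brick case. I expect this step to be the main obstacle. The issue is that $\rich{v,w}$ might a priori be toric for some torus not containing $\dil{v,w}$ (compare Example~\ref{ex:intro_111}), so I must use that the anticanonical boundary is the Richardson boundary. That fact is standard, via Knutson–Lam–Speyer.

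For the combinatorial side, (c) $\Leftrightarrow$ (d) $\Leftrightarrow$ (e) would be proved by translating (h) into interval language. Double root freeness of $\word{i}$ fails exactly when some subword has the shape of a parabolic $111$-pattern, meaning two subexpressions for $w_0$ differing by a "double root". Under the bijection of Theorem~\ref{thm:richardson-intro}(1) between fixed points and $[v,w]$, this corresponds to a length-$2$ subinterval $[x,y]$ containing three or more elements beyond the endpoints. Bruhat intervals of length $2$ always have exactly two middle elements, so the obstruction is a length-$3$ subinterval isomorphic to $S_3$ (i.e.\ $k$-crowns with $k=3$). Standard results (Dyer's classification of short intervals, and the known fact that an interval is a lattice iff it is free of such subintervals) then give (c) $\Leftrightarrow$ (d) $\Leftrightarrow$ (e).

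Finally, for (f) and (g), assume (b). The $\dil{v,w}$-orbit closures of the toric variety $\rich{v,w}$ are exactly the $\rich{v',w'}$, which are invariant by Theorem~\ref{thm:richardson-intro}(2). Hence the face lattice of the moment polytope is $\mathrm{Int}_{v,w}$, giving (g), and the vertex–face restriction yields (f) with the polar dual. Conversely, (f) or (g) implies (d) or (e) trivially, since face lattices are lattices. This closes the cycle.
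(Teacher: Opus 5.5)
\textbf{There are genuine gaps.} Your cycle only establishes implications \emph{out of} (a): (a)$\Leftrightarrow$(h)$\Leftrightarrow$(i), (a)$\Rightarrow$(b), (a)$\Rightarrow$(g)$\Rightarrow$(f)$\Rightarrow$(d), and (g)$\Rightarrow$(e). Nothing valid leads from (b)--(g) back into (a) or (h).

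The step meant to do this is the ``translation'' of (h) into (c), and it is not an argument. The bijection between $\dil{v,w}$-fixed points of $\rich{v,w}$ and $[v,w]$ is the wrong object. Double roots correspond to non-reduced subwords $\word{j}$ with $\pi(\word{j})=w_0$, i.e.\ to extra fixed points of $\brick(\word{i})$ (Lemma~\ref{lem:fixed_points}, Proposition~\ref{prop:no_nonreduced_double_root_free}). The map $\varpi$ collapses these onto points $x\borel_{+}$ with $x\in[v,w]$, and nothing in your sketch turns such a subword into a subinterval $[x,y]\cong S_3$. There are also slips along the way:
\begin{itemize}
\item Length-$2$ intervals with three or more middle elements do not exist.
\item $S_3$ is the $2$-crown (the bigon), not the $3$-crown. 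The $3$-crown is the face lattice of a triangle and is harmless.
\end{itemize}
The easy direction (a)$\Rightarrow$(c) is fine: $\orich{x,y}\cong X(\word{j})$ for a subword $\word{j}$, which is a torus by Theorem~\ref{thm:braid-is-torus}(c). The hard direction, no $S_3$ forces a torus, is exactly what is missing. Citing ``lattice iff no $S_3$'' is Dyer's unpublished result, whose nontrivial half is part of what the theorem asserts. Even granting it, it does not lead back to (a). The paper closes the loop by importing (c)$\Rightarrow$(a) from \cite[Theorem 3.6]{GKSB}. It then derives (d)--(g) from the fact that the face lattice of the $\dil{v,w}$-moment polytope of the toric $\rich{v,w}$ is $\mathrm{Int}_{v,w}$.

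Your treatment of (b)$\Rightarrow$(a) also fails. In (b), ``toric'' refers to an arbitrary torus, unrelated to $\dil{v,w}$ or to the Richardson boundary. Your dimension count gives no information about $d_{v,w}=\operatorname{rank}\dil{v,w}$. The route via \cite{EFM} can at best characterize when $(\rich{v,w},\rich{v,w}\setminus\orich{v,w})$ is a toric \emph{pair}. Knowing the boundary is anticanonical and of cluster type does not upgrade ``toric'' to ``toric pair''. For example, $\brick(111)\cong\PP^1\times\PP^1$ is toric and its boundary pair is of cluster type (Proposition~\ref{prop:brick_cluster_pair}), yet $X(111)$ is not a torus. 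So an input specific to Richardson varieties is needed; the paper uses \cite[Lemma 3.5]{GKSB} for (b)$\Rightarrow$(c).

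Two smaller issues affect your last paragraph:
\begin{itemize}
\item Deriving (g) from (b) tacitly uses that $\dil{v,w}$ acts with a dense orbit, which is (a), not (b).
\item Invariance of the $\rich{v',w'}$ only makes the $\orich{v',w'}$ unions of orbits. The paper shows each is a single orbit by lifting it to $X(\word{j})$ for a suitable subword $\word{j}$, which is a single orbit by Corollary~\ref{cor:subwords_tori_orbits}.
\end{itemize}
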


Note that $S_3$ is the face poset of a bigon, while any other interval of length $2$ is the face lattice of a regular polygon. So the equivalence between (c) and (f) in Theorem \ref{thm:toric-richardson-intro} says that the obvious restriction on length $2$ intervals is enough to guarantee the Bruhat interval is the face lattice of a convex polytope. Conversely, it would be interesting to know for which convex polytopes their face lattice is isomorphic to a Bruhat interval, but we will not pursue that here. We remark that the equivalences (c) $\Longleftrightarrow$ (d) $\Longleftrightarrow$ (f) are contained in unpublished work of Dyer \cite{Dyer_unpublished}. 

We also note that for closed Richardson varieties, the situation from Example~\ref{ex:intro_111} cannot happen, and indeed the equivalence (a) $\Longleftrightarrow$ (b) in Theorem \ref{thm:toric-richardson-intro} shows that they are toric if and only if they are toric with respect to $\dil{v,w}$.

\subsection{Organization of the paper} Section \ref{sec:preliminaries} contains background knowledge. First, it sets up notation for braids and braid words. The section then discusses subword complexes, brick varieties and their stratifications, and braid varieties and their cluster structure. This section contains no new results. In Section \ref{sec:cluster-dilation-braid} we study cluster dilation groups of braid varieties: the main objective of this section is to establish compatibilities of cluster dilation groups with distinct maps between braid varieties. 

Section \ref{sec:torus-actions-bricks} contains our first main result, namely we prove Theorem \ref{thm:main-1-intro}(a) as Theorem \ref{thm:action-extends}. Theorem \ref{thm:intro_properties_of_the_action} is then proved as Lemma \ref{lem:fixed_points} and Corollaries \ref{cor:invariant_divisors} and \ref{cor:proj_moment_to_brick_polytopes}. In Section \ref{sec:toric-brick} we study when the brick variety $\brick(\word{i})$ is $\dil{\word{i}}$-toric, in particular proving Theorems \ref{thm:toric-brick-intro} and \ref{thm:subword-complex-intro} as Theorem \ref{thm:braid-is-torus}. 

In Section \ref{sec:richardsons} we study Richardson varieties and prove Theorem \ref{thm:main-1-intro}(b). In fact, we prove this in the more general case of projected Richardson varieties. We also prove Theorem \ref{thm:richardson-intro} as Theorem \ref{thm:action-extends-richardsons} and Proposition \ref{prop:fixed-points-richardsons},  and Theorem \ref{thm:toric-richardson-intro} as Theorem \ref{thm:toric-richardson-variety}. We finish the paper with several examples in Section \ref{sect:examples}. 

\subsection*{Acknowledgements}

We thank Eugene Gorsky for useful discussions of the work \cite{GKSB}. We also thank Soyeon Kim and Melissa Sherman-Bennett for their comments on an earlier version of this manuscript. MG is very grateful to Paul Philippe, Petra Schwer, and especially Stéphane Gaussent and Zijun Li for many discussions of Bott-Samelson varieties and subword complexes, refuelling his interest in the the latter. He is also thankful to Vincent Pilaud for discussions of the problem of polytopal realizations of subword complexes in early 2010s.

LB acknowledges support from UNAM dgapa 2026 PAPIIT project IN106126 and SECIHTI project CF-2023-G-106. 
MG acknowledges support by the Deutsche Forschungsgemeinschaft (DFG, German Research Foundation) – SFB 1624 – ``Higher structures, moduli spaces and integrability'' – 506632645 and the Procope project ``Buildings, galleries and beyond''.  JS acknowledges support from UNAM PAPIIT Grant IA101526, and SECIHTI project CF-2023-G-106. He is also grateful to the CNRS-UNAM-SECIHTI Solomon Lefschetz International Laboratory for sponsoring his travel to the Institut Fourier in Grenoble, where many ideas of this paper took shape.

\section{Preliminaries}\label{sec:preliminaries} 

We refer the reader to \cite{bb_coxeter} and to \cite{Cox_toric_book, Fulton} for the background on Coxeter groups and on toric geometry and moment maps, respectively. When we talk about roots in the context of Coxeter groups, we always mean the standard geometric representation.

For a concise discussion of relevant notions and properties of cluster varieties in general and of cluster structures on braid varieties, see \cite[Sections 2 and 4]{CGSS} and \cite{Tyler}; see also \cite{FWZ_123, FWZ_6, LamSpeyer-I} for more details.
Since we will study moment maps, all the varieties will be considered over $\CC$.  

\subsection{Braids and words}

Let $(W, S)$ be a Coxeter system with Coxeter-Dynkin diagram $\dynkin$. In a slight abuse of notation, we identify $\dynkin$ with its set of vertices, so the simple reflections are denoted by $s_i$ for $i \in \dynkin$. 
Denote by $\Delta(W)=\{\alpha_i:i\in \dynkin\}$ the simple roots of $W$ and by $\nabla(W)=\{\omega_i:i\in \dynkin\}$ the fundamental weights.
Consider the Artin braid group associated to $W$, denoted by $\Br_W$. It is generated by $\sigma_i$ for $i\in\mathsf{D}$ subject to the relations 
\[
(\sigma_i\sigma_j)^{m_{ij}}=(\sigma_j\sigma_i)^{m_{ji}},
\]
where $m_{ij} = m_{ji}$ is the entry of the Coxeter matrix of $(W, S)$. 
In particular, $\sigma_i\sigma_j=\sigma_j\sigma_i$ if $i$ and $j$ are not connected by an edge in $\dynkin$. The monoid generated by $\sigma_i$'s without their inverses is called the \emph{positive braid monoid} and denoted $\Br_W^+$.

We  consider words in the alphabet $\dynkin$, $\word{i} = i_1i_2\cdots i_r$. 
We always call such words \emph{braid words}. Every braid word defines a positive braid $\beta(\word{i}) = \sigma_{i_1}\cdots \sigma_{i_r}$. 
Two braid words $\word{i}$ and $\word{i}'$ define the same braid if and only if one can be obtained from the other by a finite sequence of  \emph{braid moves} 
\[
\word{i}=\word{i}_1 (ij)^{m_{ij}}\word{i}_2\,\,\,\, \to \,\,\,\, \word{i}_1 (ji)^{m_{ji}}\word{i}_2=\word{i}'.
\]

We have a homomorphism 
\[
\pi:\Br_W^+\to W \quad \text{ given by }\quad \sigma_i\mapsto s_i.
\]
By a slight abuse of notation, if $\word{i}$ is a braid word, we denote $\pi(\word{i}) = \pi(\beta(\word{i}))$. 
For $w\in W$ we denote its minimal length representative in $\Br_W$ by $\beta(w)$. By another abuse of notation, $\word{i}(w)$ 
means an arbitrary word for $\beta(w)$. Note that $\word{i}(w)$ is only defined up to braid moves.

Another important map from the set of braid words to $W$ is the \emph{Demazure product}, defined recursively by
\[
\dem(i)=s_i, \quad \dem(\word{i} j)=\left\{ \begin{matrix}
    \dem(\word{i})s_j & \text{if } \ell(\dem(\word{i})s_j)=\ell(\dem(\word{i}))+1 \\
    \dem(\word{i}) & \text{if } \ell(\dem(\word{i})s_j)=\ell(\dem(\word{i}))-1
\end{matrix}\right..
\]
The Demazure product is well defined and satisfies
\[
\dem(j\word{i})=\left\{ \begin{matrix}
    s_j\dem(\word{i}) & \text{if } \ell(s_j\dem(\word{i}))=\ell(\dem(\word{i}))+1 \\
    \dem(\word{i}) & \text{if } \ell(s_j\dem(\word{i}))=\ell(\dem(\word{i}))-1
\end{matrix}\right..
\]

An equivalent definition of the Demazure product $\delta(\word{i})$ of $\word{i}$ is that it is the largest element of $W$ with respect to the Bruhat order such that $\word{i}$ contains some reduced word for its minimal lift as a (possibly non-consecutive) subword.
Notice that for $k\ge 1$ we have $\delta(i^k)=s_i$ while $\delta(\word{i}(w))=w$ for $w\in W$. 

The Demazure product factors through the positive braid monoid, meaning that if $\beta(\word{i}) = \beta(\word{j})$ then $\dem(\word{i}) = \dem(\word{j})$. Thus, for $\beta \in \Br_W^{+}$, we can define its Demazure product $\dem(\beta) \in W$ unambiguously to be the Demazure product of any word representative for $\beta$.

Finally, for $u,v\in W$ we write 
\[
u*v=\delta(\beta(u)\beta(v)) = \delta(\word{i}(u)\word{i}(v)).
\]

\subsection{Background on subword complexes}
\label{sect:background_subword}
This subsection follows \cite{escobar, cls, PilaudStump-EL,pilaud_stump_15}.

Given a word $\word{i}=i_1\cdots i_r$, a \emph{subword} of $\word{i}$ is a word $\word{j}$ consisting of a subsequence of the letters of $\word{i}$. 
That is, there exists a subsequence $(j_1<\dots<j_s)\subset (1<\dots<r)$ so that $\word{j}=i_{j_1}\cdots i_{j_s}$.
The word $\word{i}$ has $2^r$ subwords. 
Given a subword $\word{j}$, denote by $\word{i}\setminus \word{j}$ the subword of $\word{i}$ corresponding to the subset of indices $[r]\setminus \{j_1,\dots,j_s\}$.
Denote by $\word{i}_{(k)}:=\pi(i_{1}\cdots i_k)$, so that $\word{i}_{(r)}=\pi(\word{i})$ and set $\word{i}_{(0)}=1$. Moreover, given a subword $\word{j}$ of $\word{i}$, denote $\word{j}_{(k)} = \overrightarrow{\prod}_{j_{\ell} \leq k}s_{i_{j_{\ell}}}$.

\begin{definition}
Let $\word{i}=i_1\cdots i_r$ be a word in $S$ and $w \in W$ . The \emph{subword complex} $\Delta(\word{i},w)$ is the simplicial complex on the vertex set $[r]$ whose faces are the subwords $\word{j}$ of $\word{i}$ such that the 
word $\word{i}\setminus\word{j}$ contains a word for $\beta(w)$ as a subword. The facets of  $\Delta(\word{i},w)$ are thus the subwords $\word{j}$ of $\word{i}$ such that the word $\word{i}\setminus\word{j}$ is a word for $\beta(w)$.
\end{definition}

We note that the dual simplicial complex, called  \emph{dual subword complex}, is then the simplicial complex on the vertex set $[r]$ whose faces are the subwords $\word{j}$ of $\word{i}$ which contain  words for $\beta(w)$ as subwords and, in particular, facets are the subwords $\word{j}$ of $\word{i}$ which are words for $\beta(w)$. 

Knutson and Miller \cite{knutson_miller_04} proved that every subword complex is vertex-decomposable, and so shellable. Further, in \cite[Corollary 3.8]{knutson_miller_04} they established that $\Delta(\word{i},w)$, as a topological manifold with boundary, is homeomorphic to a sphere if and only if $\delta(\word{i}) = w$, and is homeomorphic to a ball otherwise. We will be interested only in \emph{spherical} subword complexes, i.e. those with $\delta(\word{i}) = w$.

If two words $\word{i}, \word{i'}$ are related by a commutation move, i.e. a braid move $\word{i}_1 ij \word{i}_2 \to \word{i}_1 ji \word{i}_2$, with $ij = ji$, then for any $w$, the subword complexes $\Delta(\word{i},w)$ and $\Delta(\word{i'},w)$ are canonically isomorphic, cf. \cite[Proposition 3.8]{cls}. For general braid moves, the relation is significantly more complicated and involves edge subdivisions and their inverses, see \cite{G_subword_3, manneville}. 

\begin{example}
\label{ex:A_1_simplex}
For $W = A_1$, with $w_0 = s_1$, consider the word $1^r := 11\ldots 1$ of length $r$. The subword complex $\Delta(1^r, w_0)$ is the boundary complex of an $(r-1)$-dimensional simplex. Indeed, the complement to every proper or empty subword of $1^r$ contains a word $1$ for $\beta(w_0)$, so every proper or empty subword of $1^r$ is a face of $\Delta(1^r, w_0)$.
\end{example}

\begin{example}
For $W = A_2$, with $w_0 = s_1s_2s_1 = s_2s_1s_2$, the subword complex $\Delta(12121, w_0)$ is the boundary complex of a pentagon. Indeed, each pair of cyclically consecutive letters in $12121$ is a complement to a word for $\beta(w_0)$. By doing a braid move, we can go from $12121$ to $12212$. The complex $\Delta(12212, w_0)$ is the boundary complex of a combinatorial square, as in particular the letter at the fourth position is not a vertex. Similarly, we can do a different braid move and go from $12121$ to $11211$, and the complex $\Delta(11211, w_0)$ is again the boundary complex of a combinatorial square, as in particular the letter at the third position is not a vertex.
\end{example}

For a word $\word{i}=i_1\cdots i_r$ define the \emph{root function} 
\begin{equation}\label{eq:root fct}
    \rootf(\cdot,\cdot):\{\text{subwords of }\word{i}\} \times [m] \to \Delta(W), \quad \text{ given by } \rootf(\word{j},k):=(\word{i}\setminus \word{j})_{(k-1)}(\alpha_{i_k}).
\end{equation}
Given a subword complex $\Delta(\word{i},w)$ and a face $\word{j}$, we define the \emph{root configuration} of $\word{j}$ as the multiset 
\begin{equation}\label{eq:root config}
    R(\word{j})=\left\{\{\rootf(\word{j},k):k\in[r]\}\right\}.
\end{equation}

Consider a facet $\word{j}$ of $\Delta(\word{i},w)$ and an index $j$ in $\word{j}$.
If there exists another facet $\word{j}'$ of $\Delta(\word{i},w)$ and an index $j'$ in $\word{j}'$ so that $\word{j}\setminus j=\word{j}'\setminus j'$ we say that $\word{j}$ and $\word{j'}$ are \emph{adjacent facets}.
Moreover, we say that $j$ is \emph{flippable} in $\word{j}$ and that $\word{j}'$ is obtained from $\word{j}$ by \emph{flipping} $j$.

\begin{definition}\label{def:double root free}
    A subword complex $\Delta(\word{i},w)$ is said to have a \emph{double root} if there exists a facet $\word{j}$ and two flippable indices $j\not=j'\in[r]$ in $\word{j}$ for which $\rootf(\word{j},j)=\rootf(\word{j},j')$.
If no double roots exist for $\Delta(\word{i},w)$ it is called \emph{double root free}.
We will say that a word $\word{i}$ is \emph{double root free} if the spherical subword complex $\Delta(\word{i}, \dem(\word{i}))$ is double root free. Note that in the spherical case, each index in each facet is flippable, so the double root condition concerns simply all pairs of indices in facets.
\end{definition}

The ridge graph of a simplicial complex has its maximal faces as vertices and codimension 1 faces as edges. The ridge graph of $\Delta(\word{i},w)$ is also called its \emph{flip graph}. 
It admits a natural acyclic orientation, where a flip replacing a position $i$ by position $j$ is \emph{increasing} if $i < j$. See \cite[Remark 4.5]{knutson_miller_04} and \cite{PilaudStump-EL} for the discussion of properties of the increasing flip graph. Pilaud and Stump show \cite[Proposition 5.1]{PilaudStump-EL} that the word $\word{i}$ is double root free if and only if the increasing flip graph coincides with the Hasse diagram of the partial order on facets given by the transitive closure of increasing flips. This in particular implies nice formulas for the Möbius function of this partial order for double root free words. We note that it follows from the proof of \cite[Proposition 5.1]{PilaudStump-EL} that the word $\word{i}$ is double root free if and only if its flip graph does not contain a triangle.

\begin{example}
Consider the word $1^3 = 111$ from Example~\ref{ex:A_1_simplex}. The subword complex $\Delta(1^3, w_0)$ is the boundary complex of a triangle. For the facet given by the first two positions in $1^3$, both of these indices are flippable and the corresponding roots are both just $\alpha_1$, so the complex has a double root. Note however that for the facet given by the first and the third positions, the roots are $\alpha_1$ and $s_1(\alpha_1) = -\alpha_1$. This shows that it is possible that only some facets of a subword complexes have two copies of the same root.
\end{example}

A spherical subword complex $\Delta(\word{i},w)$ is said to be \emph{root independent} if the roots in the root configuration of each facet are linearly independent. By \cite[Lemma 3.6]{pilaud_stump_15}, this is equivalent to the roots in the root configuration of some facet being linearly independent. We say that a word $\word{i}$ is \emph{root independent} if $\Delta(\word{i},w)$ is root independent. 

Clearly, each root independent word is double root free, but the converse is not true, as the next example shows. Further examples will be discussed in Section~\ref{sect:examples}. 

\begin{example}
\label{ex:112211}
Consider the word $\word{i} = 112211$ in $W = A_2$. For each facet, the root configuration contains exactly one of $\alpha_1$ and $-\alpha_1$, exactly one of $\alpha_1 + \alpha_2$ and $-\alpha_1 -  \alpha_2$, and exactly one of $\alpha_2$ and $-\alpha_2$. There are no repetitions in such configurations, so the word $\word{i}$ is double root free. Since $(\alpha_1, \alpha_1 + \alpha_2, \alpha_2)$ are not linearly independent, $\word{i}$ is not root independent. 
\end{example}

\bigskip

Assume now for the rest of this subsection that $(W, S)$ is a finite Coxeter system. 

For any word $\word{j}$ with $\dem(\word{j}) = \dem$, there exists an isomorphism \cite[Theorem 3.7]{cls}
\begin{equation}
\label{eq:extending_words}
\Delta(\word{j}, \dem) \cong \Delta(\word{j} \word{k}, w_0), \,\, \mbox{for any} \,\, \word{k} \,\, \mbox{of the form} \,\, \word{i}(\dem^{-1}w_0).
\end{equation}
We now recall the definition of certain polytopes introduced in \cite{pilaud_santos_12, pilaud_stump_15} and related to spherical subword complexes. Thanks to isomorphisms \eqref{eq:extending_words}, we may and will assume without loss of generality that $\word{i} = i_1\cdots i_r$ is a braid word with $\dem(\word{i}) = w_0$. Denote by $i_{1^*}$ the unique vertex of $\dynkin$ such that $s_{i_{1}^*} = w_0 s_{i_1} w_0$.
We call the word $\word{i'} = i_2\cdots i_r i_{1}^*$ the \emph{cyclic rotation} of the word $\word{i}$. We have an explicit isomorphism \cite[Proposition 3.9]{cls}
\[
\Delta(\word{i}, w_0) \cong \Delta(\word{i'}, w_0).
\]

For a word $\word{i}$ with $\dem(\word{i}) = w_0$ and a facet $\word{j}$  of the spherical subword complex $\Delta(\word{i}, w_0)$, we define the \emph{weight function}
\begin{equation}\label{eq:weight fct}
    w(\cdot,\cdot):\{\text{subwords of }\word{i}\} \times [m] \to \nabla(W), \quad \text{ given by } w(\word{j},k):=(\word{i}\setminus \word{j})_{(k-1)}(\omega_{i_k})
\end{equation}
and the \emph{brick vector} of the facet $\word{j}$ as
\begin{equation}\label{eq:brick vector}
    B(\word{j})=\sum_{k=1}^r w(\word{j},k).
\end{equation}
The \emph{brick polytope} of $\Delta(\word{i},w_0)$ is the convex hull of all brick vectors
\begin{equation}\label{eq:brick polytope}
    B(\word{i},w_0)=\conv\{B(\word{j}) : \word{j} \in \Delta(\word{i},w_0) \text{ and } (\word{i}\setminus \word{j})_{(m)} = w_0\}.
\end{equation}

Pilaud and Stump \cite[Theorem 4.8, Section 7]{pilaud_stump_15} proved that the polar dual of the brick polytope $B(\word{i},w_0)$ realizes the subword complex  $\Delta(\word{i}, w_0)$ (meaning that the polar dual of $B(\word{i},w_0)$ is a simplicial polytope, isomorphic to $\Delta(\word{i}, w_0)$ as an abstract simplicial complex; equivalently, the nerve complex of $B(\word{i},w_0)$ is isomorphic to $\Delta(\word{i}, w_0)$) if and only if the word $\word{i}$ is root independent. The definition of brick polytopes generalizes to arbitrary words in finite Coxeter systems, cf. \cite{escobar}, and this result generalizes to arbitrary spherical subword complexes for such systems. 

For some root dependent words $\word{i}$, 
it is known that there cannot exist a polytopal realization of $\Delta(\word{i}, w_0)$ whose polar dual would project onto the brick polytope. One example is given by the word $1212121$ in $W = A_2$, cf. \cite[Proposition 5.9]{pilaud_santos_12}.
We will see in Corollary~\ref{cor:polytope_projects_onto_brick} that double root free words behave nicely in this aspect, as for them there always exist  polytopal realizations of $\Delta(\word{i}, w_0)$ whose polar duals project onto the corresponding brick polytopes.

\medskip

\subsection{Brick varieties and their stratifications}

We fix a simple algebraic group $\group$ with Weyl group $W$. More precisely, we 
choose a Borel subgroup $\borel=\borel_+$. Denote by $\borel_-$ its opposite. Further, let $\unipotent_{\pm}$ be the corresponding unipotent subgroups and denote by $T=\borel_+\cap \borel_-$ the algebraic torus, so that $W = N_{\group}(T)/T$. 
Then $\group/\borel$ is called the \emph{flag variety}, it is a smooth projective algebraic variety. The points of $\group/\borel$ are called \emph{flags}. Moreover, the coset of the identity $\borel\in \group/\borel$ is called the \emph{standard flag}.
Points in $\group/\borel$ correspond to Borel subgroups of $\group$. The point corresponding to $\borel_-$ is called the \emph{anti-standard flag}.

\medskip
We denote by $\wo$ the longest element of $W$. By abuse of notation, for an element $w \in W$ we denote by $w \in \group$ a lift to $\group$ so that, for example, $\borel_- = \wo\borel_+\wo$.

The group $\group$ acts diagonally on $\group/\borel \times \group/\borel$, and the orbits are indexed by elements of $W$. More precisely, for every pair of flags $(\flag{1}, \flag{2})$ there exist $g \in G$ and an element $w \in W$ such that $(g\flag{1}, g\flag{2}) = (\borel_{+}, w\borel_{+})$. The element $w$ is uniquely determined by the pair $(\flag{1}, \flag{2})$, and in this case we say that $w$ is the relative position of $(\flag{1}, \flag{2})$ and write $\flag{1} \rel{w} \flag{2}$. Note that $\flag{1} \rel{w} \flag{2}$ is equivalent to $\flag{2} \rel{w^{-1}} \flag{1}$. 

\begin{definition}
    Let $\word{i} = i_1\cdots i_r$ be a braid word. 
    \begin{enumerate}
        \item The \emph{Bott-Samelson variety}  $\BS(\word{i})$ is the variety consisting of all $(r+1)$-tuples of flags $(\flag{0}, \flag{1}, \dots, \flag{r})$ satisfying
        \[
        \borel_{+} = \flag{0} \rel{s_{i_1}} \flag{1} \rel{s_{i_2}} \flag{2} \rel{s_{i_3}} \cdots \rel{s_{i_r}} \flag{r}.
        \]
    \item The \emph{brick variety} $\brick(\word{i})$ is the closed subvariety of $\BS(\word{i})$ cut out by the condition that the last flag $\flag{r}$ is $\dem(\word{i})\borel_{+}$.
    \end{enumerate}

    Note that the Bott-Samelson variety $\BS(\word{i})$ admits a map $\BS({\word{i}}) \to \group/\borel_{+}$ that simply projects an $(r+1)$-tuple to its last flag, and $\brick(\word{i})$ is a fiber of this map. 
\end{definition}

Bott-Samelson varieties are a classical object first studied in \cite{BottSamelson}. The brick variety $\brick(\word{i})$ was introduced by Escobar in \cite[Definition 19]{escobar}. In \cite[Theorem 20]{escobar}, she proves that $\brick(\word{i})$ is a smooth, irreducible projective algebraic variety of dimension $r - \ell(\dem(\word{i}))$.

Given a subword $\word{j}$ of the word $\word{i}$, we define the following locally closed subvariety of the brick manifold $\brick(\word{i})$: 
\[
\brick(\word{i})^{\word{j}} := \{(\flag{0}, \dots, \flag{r}) \in \brick(\word{i}) \mid \flag{t-1} \neq \flag{t} \; \text{if and only if} \; t \in \word{j}\}. 
\]
Note that $\brick(\word{i})^{\word{j}}$ is empty unless $\dem(\word{j}) = \dem(\word{i})$. The following result is due to Escobar.

\begin{theorem}
[\cite{escobar}, Theorem 24]
\label{thm:escobar_stratification}
We have that the decomposition
\[
\brick(\word{i}) = \bigsqcup_{\substack{\word{j} \; \text{a subword of} \; \word{i} \\ \dem(\word{j}) = \dem(\word{i})}} \brick(\word{i})^{\word{j}},
\]
is a stratification of the brick variety whose intersection complex is dual to the subword complex $\Delta(\word{i},\delta(\word{i}))$.
Moreover,
\[
\overline{\brick(\word{i})^{\word{j}}} = \bigsqcup_{\word{k} \; \text{a subword of} \; \word{j}} \brick(\word{i})^{\word{k}}. 
\]
\end{theorem}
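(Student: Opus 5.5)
The plan is to describe each stratum $\brick(\word{i})^{\word{j}}$ explicitly as a product of one-step configurations, and then read off both the disjoint decomposition and the closure relation from the geometry of the map $\BS(\word{i}) \to \group/\borel_{+}$.

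\emph{Step 1: the disjoint decomposition.} A point of $\BS(\word{i})$ is a chain $\flag{0} = \borel_{+}, \flag{1}, \dots, \flag{r}$ in which each pair $(\flag{t-1}, \flag{t})$ is in relative position $s_{i_t}$ in the closed sense, meaning either equal or in position $s_{i_t}$. Define $\word{j}$ as the set of indices $t$ with $\flag{t-1} \neq \flag{t}$. This partitions $\brick(\word{i})$ set-theoretically.

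\emph{Step 2: the Demazure constraint.} Deleting the repeated flags gives a point of the open Bott--Samelson cell of $\word{j}$ that ends at $\dem(\word{i})\borel_{+}$. By the standard multiplication of Bruhat cells, $\borel s_{j_1}\borel \cdots \borel s_{j_k}\borel = \borel\,\dem(\word{j})\borel \sqcup (\text{lower cells})$. More precisely, the endpoints reachable by such chains lie in $\overline{\borel\,\dem(\word{j})\borel}/\borel$. Hence $\dem(\word{i}) \leq \dem(\word{j})$, and the reverse inequality $\dem(\word{j}) \leq \dem(\word{i})$ holds for any subword. So the stratum is nonempty only if $\dem(\word{j}) = \dem(\word{i})$.

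Conversely, each such stratum is isomorphic to the fiber over $\dem(\word{i})\borel$ of the open cell $\borel s_{j_1}\borel \times_{\borel} \cdots \to \group/\borel$. It is therefore a braid-variety-type space, which is nonempty and smooth of dimension $|\word{j}| - \ell(\dem(\word{i}))$; this is the content of \cite[Theorem 20]{escobar} applied to $\word{j}$.

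\emph{Step 3: the closure relation.} For the inclusion $\subseteq$: equality $\flag{t-1} = \flag{t}$ is a closed condition. Hence the closure of $\brick(\word{i})^{\word{j}}$ lies in the union of strata $\brick(\word{i})^{\word{k}}$ with $\word{k} \subseteq \word{j}$.

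For the inclusion $\supseteq$: the locus where $\flag{t-1} = \flag{t}$ for all $t \notin \word{j}$ is a copy of $\brick(\word{j})$. By Escobar's theorem applied to $\word{j}$, this copy is irreducible, of dimension $|\word{j}| - \ell(\dem(\word{i}))$, and has $\brick(\word{i})^{\word{j}}$ as a dense open subset. Its closure is therefore all of $\brick(\word{j})$, which is exactly the union of the strata $\brick(\word{i})^{\word{k}}$ with $\word{k} \subseteq \word{j}$.

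\emph{Step 4: the intersection complex.} Each closed stratum $\brick(\word{j})$ with $|\word{j}| = r-1$ is a smooth divisor $D_t$, where $t$ is the omitted index. Intersections of these divisors are $\bigcap_{t \in S} D_t = \brick(\word{i} \setminus S)$, which is nonempty exactly when $\dem(\word{i}\setminus S) = \dem(\word{i})$. That condition says $S$ is a face of $\Delta(\word{i}, \dem(\word{i}))$, which gives the claimed duality.

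\emph{Main obstacle.} The hardest point is the transversality of these intersections, i.e.\ that the stratification is by normal crossings. I would prove this in local coordinates. At a point of a stratum, each omitted letter $t$ contributes a coordinate $z_t \in \mathbb{C}$ parametrizing $\flag{t} \in \mathbb{P}^1$, with $z_t = 0$ meaning $\flag{t} = \flag{t-1}$. The smoothness argument of Escobar shows that the final-flag condition can be solved for the remaining coordinates. In those coordinates $D_t = \{z_t = 0\}$, so the divisors are coordinate hyperplanes and meet transversally.
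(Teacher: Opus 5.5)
The paper gives no proof of this statement. It is quoted from Escobar (Theorem~24 there), so there is no internal argument to compare against. Your proposal is a correct, self-contained proof.

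The structure is the natural one:
\begin{itemize}
\item partition $\brick(\word{i})$ by the set of non-degenerate steps;
\item use the Bruhat-cell product to force $\dem(\word{j})=\dem(\word{i})$: the product $\borel s_{j_1}\borel\cdots\borel s_{j_k}\borel$ is a union of double cosets $\borel u\borel$ with $u\le\dem(\word{j})$, and it does include $u=\dem(\word{j})$;
\item use closedness of the conditions $\flag{t-1}=\flag{t}$ for one inclusion of the closure relation;
\item use irreducibility of $\brick(\word{j})$ for the other inclusion. This is Escobar's Theorem~20, which is proved independently via the fibration over the open Schubert cell, so nothing is circular.
\end{itemize}
Steps 1--3 already give the full combinatorial content of the statement. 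The strata are indexed by complements of faces of $\Delta(\word{i},\dem(\word{i}))$, and the closure order is reverse inclusion of faces. Step~4 just restates this in terms of the divisors $D_t$. Compared with the bare citation, your route makes explicit exactly which inputs are needed. It also connects to the paper's own charts: your $z_t$ are the coordinates $z_{t,\phi_{\word{j}}}$, $t\notin\word{j}$, on $C_{\word{i}}(\phi_{\word{j}})$ from Remark~\ref{rmk:equations-braid}, since $B^{-}_{i}(z)\in\borel_{+}$ if and only if $z=0$.

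Two points to tighten.

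First, nonemptiness of the open stratum $\brick(\word{i})^{\word{j}}\cong X(\word{j})$ is not supplied by Theorem~20. It comes from the top cell $\borel\dem(\word{j})\borel$ actually occurring in the product you quote, together with translating by $\borel_{+}$ to land exactly on $\dem(\word{i})\borel_{+}$. You need this for the density claim in Step~3.

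Second, transversality is more than the statement asks for. Your sentence ``the final-flag condition can be solved for the remaining coordinates'' is its whole content, so it must be argued. Here is one way:
\begin{itemize}
\item Fix a point $p$ of the stratum. Restrict the endpoint map to the coordinate subspace $\{z_{t,\phi_{\word{j}}}=0 : t\notin\word{j}\}$. This restriction is the endpoint map of the open Bott--Samelson cell of $\word{j}$.
\item That map is a submersion onto the cell $\borel_{+}\dem(\word{i})\borel_{+}/\borel_{+}$. Indeed, by $\borel_{+}$-equivariance its preimage is isomorphic to $(\unipotent_{+}\cap\dem(\word{i})\unipotent_{-}\dem(\word{i})^{-1})\times X(\word{j})$.
\item Hence the full endpoint map is a submersion near $p$, and $T_p\brick(\word{i})$ is its kernel.
\item That kernel surjects onto the coordinates $z_{t,\phi_{\word{j}}}$, $t\notin\word{j}$, which is exactly transversality of the $D_t$ at $p$.
\end{itemize}
Equivalently, the boundary of $\BS(\word{i})$ is simple normal crossing. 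The same equivariant trivialization of the preimage of the open cell, as (cell)$\times\brick(\word{i})$, carries each boundary divisor $\{\flag{t-1}=\flag{t}\}$ to (cell)$\times D_t$, so the normal crossing property descends to $\brick(\word{i})$.
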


Note that if $\dem(\word{i}) = \dem(\word{j})$ then $\overline{\brick(\word{i})^{\word{j}}} \cong \brick(\word{j})$. We will sometimes abuse notation and say that $\brick(\word{j})$ is a closed subvariety of $\brick(\word{i})$. 

The diagonal action of $T$ on the Bott-Samelson variety $\BS(\word{i})$ naturally restricts to the brick variety $\brick(\word{i})$, as the latter is the fiber of the projection to the flag variety. The variety $\BS(\word{i})$ is Hamiltonian symplectic with respect to this action, with an explicit moment map whose description can be found e.g. in \cite{escobar}. (Strictly speaking, one has to choose a $T$-equivariant projective embedding to speak of moment maps; one embedding is described in detail in \cite[Section 4.1]{escobar} and this is the one we use; a different embedding was recently considered in \cite{GU}.) By general results of \cite{Atiyah, GuilleminSternbergI}, the image of the moment map is a convex polytope given by the convex hull of the images of fixed points of the action. The moment map restricts to $\brick(\word{i})$. In \cite[Theorem 21]{escobar}, Escobar proved that the image of  $\brick(\word{i})$ under this restriction is precisely the brick polytope $B(\word{i}, \dem(\word{i}))$. We further have the following.

\begin{theorem}
Let $(W, S)$ be a finite crystallographic Coxeter system. The following are equivalent for a word $\word{i}$:

\begin{enumerate}
\item $\word{i}$ is root independent;
\item $\brick(\word{i})$ is $T$-toric;
\item The polar dual of the brick polytope $B(\word{i},\dem(\word{i}))$ realizes the subword complex  $\Delta(\word{i}, \dem(\word{i}))$. 
\end{enumerate}
\end{theorem}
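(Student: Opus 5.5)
The plan is to pass through the combinatorics of Escobar's stratification together with the fixed point data of the $T$-action. First I reduce to $\dem(\word{i}) = w_0$, using \eqref{eq:extending_words}. The isomorphism $\brick(\word{i}) \cong \brick(\word{i}\word{k})$ for $\word{k} = \word{i}(\dem^{-1}w_0)$ holds because the appended flags are forced; it is $T$-equivariant and compatible with the brick vectors. The $T$-fixed points of $\brick(\word{i})$ are the points where every flag is a coordinate flag. By Theorem~\ref{thm:escobar_stratification} these are exactly the zero-dimensional strata $\brick(\word{i})^{\word{j}}$ with $\word{j}$ reduced for $w_0$, so they correspond to the facets $\word{i}\setminus\word{j}$ of $\Delta(\word{i},w_0)$. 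The weights of $T$ on the tangent space at such a fixed point are the roots $\rootf(\word{i}\setminus\word{j},k)$ at the positions $k$ of the facet, up to a uniform sign. This follows from the local coordinates on the one-dimensional strata: the curve $\overline{\brick(\word{i})^{\word{j}'}}\cong\mathbb{P}^1$ obtained by flipping position $k$ is a $T$-invariant $\mathbb{P}^1$ on which $T$ acts through the character $\rootf$.

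For (1)$\Rightarrow$(2), assume the root configuration at one fixed point (equivalently, by \cite[Lemma 3.6]{pilaud_stump_15}, at all fixed points) is linearly independent. Then $T$ acts on the $n=r-\ell(w_0)$-dimensional tangent space through independent characters. The image of $T$ in $\mathrm{Aut}(\brick(\word{i}))$ is therefore a torus of rank $n$ with a dense orbit near the fixed point. Faithfulness is obtained after quotienting by the kernel, which is a subtorus times a finite group. Since $\brick(\word{i})$ is smooth, projective and normal (\cite[Theorem 20]{escobar}), it is $T$-toric. For (2)$\Rightarrow$(1), if $\brick(\word{i})$ is $T$-toric of dimension $n$, the effective torus $T/\ker$ has rank $n$. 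At a smooth fixed point of a smooth toric variety the tangent weights form a basis of the character lattice of $T/\ker$, so in particular they are linearly independent. This gives root independence.

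For (2)$\Rightarrow$(3), I use the standard toric dictionary. For a smooth projective toric variety with an ample equivariant line bundle, the moment image of Escobar's embedding is the brick polytope $B(\word{i},w_0)$ (\cite[Theorem 21]{escobar}). Its normal fan is the fan of the toric variety, and its face lattice is dual to the orbit closure lattice. The torus orbit closures are exactly the closed strata $\overline{\brick(\word{i})^{\word{j}}}$, since these are $T$-invariant, irreducible of the right dimensions and cover everything. By Theorem~\ref{thm:escobar_stratification} the face lattice is therefore dual to $\Delta(\word{i},w_0)$, so the polar dual realizes the subword complex. Here I must check that the embedding restricted to the brick variety stays a genuine embedding, so that the moment polytope is full-dimensional with the correct vertices. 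Otherwise distinct fixed points could collapse to the same brick vector.

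The main obstacle is (3)$\Rightarrow$(1). I argue by contrapositive. Suppose the root configuration at some facet is dependent. I then want to show that the nerve of $B(\word{i},w_0)$ is not $\Delta(\word{i},w_0)$. A first count: if $\dim B < n$, the polar dual cannot be a simplicial $(n-1)$-sphere on the required facets, and we are done. The dimension of $B$ equals the rank of the span of the edge directions at a vertex. By Atiyah–Guillemin–Sternberg applied to the $T$-invariant $\mathbb{P}^1$'s through the fixed point, these edge directions are multiples of the roots in the root configuration. Dependence then gives $\dim B < n$ whenever the root configuration spans fewer than $n$ dimensions, which is exactly what dependence of $n$ vectors means. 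The delicate point is that some edges may degenerate, with two adjacent fixed points sharing a brick vector. In that case the vertex count already fails to match the number of facets. So either way the realization fails, and I would handle this degenerate case separately, following the argument of \cite[Theorem 4.8]{pilaud_stump_15}.
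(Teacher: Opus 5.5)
Your route differs from the paper's, which simply quotes \cite[Theorem 23]{escobar} for (1)$\Leftrightarrow$(2) and \cite[Theorem 4.8, Section 7]{pilaud_stump_15} for (1)$\Leftrightarrow$(3). Your tangent-weight arguments for (1)$\Leftrightarrow$(2) are fine. Your (3)$\Rightarrow$(1) is also fine, and easier than you fear: $\dim B(\word{i},w_0)$ is the generic orbit dimension, which equals the rank of the tangent weights at \emph{any} fixed point (use the linear model there). So dependence at one facet already forces $\dim B<n$. Your ``degenerate'' case is vacuous, because each flip curve is a $T$-invariant $\mathbb{P}^1$ whose character is a nonzero root, so its moment image is a nondegenerate segment.

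The genuine gap is in (2)$\Rightarrow$(3), and it rests on a false claim in your first paragraph. The $T$-fixed points of $\brick(\word{i})$ are indexed by \emph{all} subwords $\word{j}$ with $\pi(\word{j})=\dem(\word{i})$, reduced or not (Lemmas~\ref{lem:fixed-points-braid} and~\ref{lem:fixed_points}), not only by the zero-dimensional strata. For $\word{i}=111$, the point $(0,0,0)\in X(111)$ is a fourth fixed point lying in the open stratum, while $\Delta(111,s_1)$ has only three facets.

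In (2)$\Rightarrow$(3), ``$T$-invariant and irreducible'' only shows that each closed stratum is \emph{some} orbit closure, i.e.\ that each $X(\word{j})$ contains a dense orbit. To make the face lattice of $B$ dual to $\Delta(\word{i},w_0)$ you need the converse: every orbit closure is a closed stratum, equivalently each $X(\word{j})$ is a single $T$-orbit. A fixed point inside a positive-dimensional stratum is exactly what would break this, and ``they cover everything'' does not exclude it. Since this is the real content of the Pilaud--Stump direction in your translation, it must be proved.

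One fix is as follows. Define $I\mapsto\tau_I$ by $V(\tau_I)=\overline{\brick(\word{i})^{\word{i}\setminus I}}$. This map is injective, since disjoint strata have distinct closures. It is inclusion-preserving by Theorem~\ref{thm:escobar_stratification}, and $\dim\tau_I=|I|$. Because the fan is simplicial, $\tau_I$ is spanned by the distinct rays $\tau_{\{k\}}$, $k\in I$. You therefore have an injective simplicial map from the $(n-1)$-sphere $\Delta(\word{i},w_0)$ \cite{knutson_miller_04} into the $(n-1)$-sphere of the complete simplicial fan, which is onto by invariance of domain. Hence orbits and strata coincide, and a posteriori the fixed points are exactly the facet points.

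Alternatively, use (1), which you already derived from (2). As in the proof of Proposition~\ref{prop:no_nonreduced_double_root_free}, a non-reduced $\word{j}$ with $\pi(\word{j})=w_0$, shortened to length $\ell(w_0)+2$, yields a facet whose root configuration contains both $u(\alpha_i)$ and $\pm u(\alpha_i)$. So under (1) every fixed point is a facet point $p_I$. In a smooth toric variety exactly $n$ toric prime divisors pass through $p_I$, and the $n$ boundary divisors indexed by $k\in I$ already do. Hence every toric divisor is a boundary divisor and $X(\word{i})$ is the open orbit. Apply the same argument to each orbit closure $\brick(\word{j})$.
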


\begin{proof}
Equivalence (1) $\Longleftrightarrow$ (2) is proved in \cite[Theorem 23]{escobar}. Equivalence (1) $\Longleftrightarrow$ (3) is shown in \cite[Theorem 4.8, Section 7]{pilaud_stump_15}, as discussed in the previous subsection.
\end{proof}

\subsection{Braid varieties and their cluster structure}\label{sec:braid-varieties}

\begin{definition}
    Let $\word{i} = i_1\cdots i_r$ be a braid word. We define the \emph{braid variety} $X(\word{i})$ to be the variety of all $(r+1)$-tuples of flags $(\flag{0}, \flag{1}, \dots, \flag{r})$ satisfying the following conditions
    \[
    \flag{0} = \borel_+,\qquad \flag{t-1} \rel{s_{i_t}} \flag{t} \; \text{for $t = 1, \dots, r$}, \qquad \flag{r} = \dem(\word{i})\borel_+.
    \]
\end{definition}

Note that the braid variety $X(\word{i})$ is an open set inside the brick variety, and in fact $X(\word{i}) \cong \brick(\word{i})^{\word{i}}$. In particular, $X(\word{i})$ is also smooth of dimension $r - \ell(\dem(\word{i}))$. Moreover, it is affine by \cite[Corollary 3.7]{CGGLSS}. Note that, more generally, if $\word{j}$ is a subword of $\word{i}$ and $\dem(\word{j}) = \dem(\word{i})$ then $X(\word{j}) \cong \brick(\word{i})^{\word{j}}$. 

\begin{remark}
Up to a canonical isomorphism, the braid variety $X(\word{i})$ depends only on the positive braid $\beta(\word{i}) \in \Br^{+}$ defined by $\word{i}$. More precisely, if $\word{i}$ and $\word{j}$ differ only by braid moves, then there is a canonical isomorphism $X(\word{i}) \cong X(\word{j})$. In contrast, the brick variety $\brick(\word{i})$ does depend on the chosen braid word, and not just on the underlying braid. 
\end{remark}

Just as with spherical subword complexes, we can work without loss of generality only with words $\word{i}$ with $\dem(\word{i}) = w_0$. This happens because for any word $\word{j}$ with $\dem(\word{j}) = \dem$, there exists an isomorphism 
\[
X(\word{j}) \cong X(\word{j} \word{k}), \,\, \mbox{for any} \,\, \word{k} \,\, \mbox{of the form} \,\, \word{i}(\dem^{-1}w_0).
\]

Assume now that $\word{i} = i_1\cdots i_r$ a braid word with $\dem(\word{i}) = w_0$ and consider its cyclic rotation $\word{i'} = i_2\cdots i_r i_{1}^*$. Again in parallel with the story of spherical subword complexes, there exists an explicit isomorphism  
\[
X(\word{i}) \overset\sim\to X(\word{i'}).
\]

By the main results of \cite{CGGLSS, GLSB, GLSBS}, the braid variety $X(\word{i})$ admits a cluster structure, meaning that the coordinate algebra $\C[X(\word{i})]$ admits the structure of a Fomin-Zelevinsky cluster algebra. By the main result of \cite{CGGSSBS}, the cluster structures constructed in \cite{CGGLSS} and \cite{GLSB} coincide. We quickly review properties of the cluster structure constructed in \cite{CGGLSS}, and refer the reader to \cite{CGGLSS} for details on the construction.

First, we give a presentation of the coordinate algebra $\C[X(\word{i})]$. For this, we fix a \emph{pinning} of the group $\group$: for every vertex $i \in \dynkin$ we fix isomorphisms $x_i: \C \to \unipotent_i^{+}$, $y_i: \C \to \unipotent_i^{-}$, where $\unipotent_i^{+}$ and $\unipotent_i^{-}$ are the corresponding root subgroups of $\group$. These isomorphisms are compatible in the sense that the assignment
\[
\begin{pmatrix}
    1 & z \\ 0 & 1 
\end{pmatrix}
\mapsto x_i(z), \qquad
\begin{pmatrix}
   1 & 0 \\ z & 1  
\end{pmatrix}
\mapsto y_i(z), \qquad
\begin{pmatrix}
 b & 0 \\ 0 & b^{-1}
\end{pmatrix}
\mapsto \chi_i(b)
\]
gives a morphism $\varphi_i: \SL_2 \to \group$, where $\chi_i: \C^{\times} \to T$ is the simple coroot corresponding to $i \in \dynkin$. Any simple algebraic group $\group$ admits such a pinning, and any two pinnings are conjugate, cf. \cite{lusztig-TP1}.

For $z \in \C$ and $i \in \dynkin$, we define the element 
\[
B_i(z) = \varphi_i\begin{pmatrix} z & -1 \\ 1 & 0 \end{pmatrix} \in \group.
\]
Then we have the following affine realization of the braid variety $X(\word{i})$, cf. \cite[Section 3.4]{CGGLSS}. For a braid word $\word{i} = i_1\cdots i_r$:
\begin{equation}\label{eq:affine-realization-braid-variety}
X(\word{i}) \cong \{(z_1, \dots, z_r) \in \C^r \mid \dem(\word{i})^{-1}B_{i_1}(z_1)\cdots B_{i_r}(z_r) \in \borel\} 
\end{equation}
We will call $z_1, \dots, z_r$ the \emph{affine coordinates} on $X(\word{i})$.

The main combinatorial input to define  a seed of a cluster structure on $X(\word{i})$ is that of a \emph{Demazure weave}. We will omit the word ``Demazure'', as these will be the only weaves appearing in the paper. 
We consider a directed graph $\words$ whose vertex set is the set of all braid words. We have two types of arrows. 

\begin{enumerate}
    \item[($\mathrm{A}_1$)] There is an arrow $\word{i}_1(ij\dots)\word{i}_2 \to \word{i}_1(ji\dots)\word{i}_2$ encoding braid moves. Note that there is also an arrow in the opposite direction.
    \item[$(\mathrm{A}_2$)] There is an arrow $\word{i}_1(ii)\word{i}_2 \to \word{i}_1(i)\word{i}_2$. There is no arrow going in the opposite direction. 
\end{enumerate}

We will call an arrow of the form ($\mathrm{A}_2$) \emph{frozen} if $\dem(\word{i}_1\word{i}_2) < \dem(\word{i}_1i\word{i}_2)$. 
Note that the Demazure product gives a bijection between $W$ and the set of connected components of the directed graph $\words$. A \emph{weave} $\weave: \word{i} \to \word{j}$ is a directed path in the graph $\words$. By definition, a weave can be decomposed into arrows of the form ($\mathrm{A}_1$) and ($\mathrm{A}_2$) above. 
These encode maps between braid varieties, as follows. 

\begin{enumerate}
    \item[($\mathrm{A}_1$)]  These arrows encode isomorphisms coming from braid moves:
    \begin{equation}\label{eq:2m-valent}
    X(\word{j}) \xrightarrow{\cong} X(\word{i}), \qquad \text{where} \; \word{i} =  \word{i}_1\!(ij\dots)\word{i}_2, \text{ and } \word{j} = \word{i}_1(ji\dots)\word{i}_2 
    \end{equation}
    \item[($\mathrm{A}_2$)]  These arrows encode an inclusion
    \begin{equation}\label{eqa:3-valent}
    X(\word{j}) \times \C^{\times} \hookrightarrow X(\word{i}), \,\,\,\text{where} \; \word{i} = \word{i}_1 (i i) \word{i}_2 \; \text{ and } \; \word{j} = \word{i}_1 (i ) \word{i}_2
    \end{equation}
    that identifies $X(\word{j}) \times \C^{\times}$ with a principal open set in $X(\word{i})$.
\end{enumerate} 

A \emph{complete Demazure weave} is a weave $\weave: \word{i} \to \dem(\word{i})$, where we abuse the notation and the right-hand side stands for any reduced word for $\dem(\word{i})$.

\begin{theorem}[Theorem 1.1, Lemma 7.2, Theorem 5.12 and Theorem 5.31 in \cite{CGGLSS}]\label{thm:cluster-structure}
The algebra $\C[X(\word{i})]$ admits a cluster structure. Moreover:
\begin{enumerate}
\item Every complete Demazure weave $\weave: \word{i} \to \dem(\word{i})$ defines a seed in this cluster structure. Cluster variables in this seed are in bijection with arrows of the form ($\mathrm{A}_2$) in the path $\weave: \word{i} \to \dem(\word{i})$. Frozen variables correspond to frozen arrows. \cite[Theorem 1.1, Lemma 7.2]{CGGLSS}
\item Any affine coordinate $z_1, \dots, z_r$ is either identically vanishing on $X(\word{i})$ or a cluster monomial. \cite[Lemma 5.24, Corollary 5.34]{CGGLSS}
\item Assume that $\dem(\word{i}) = w_0$. Let $\word{i}' = i_2\cdots i_{r}i_{1}^{\ast}$. Then there is a cluster quasi-isomorphism $X(\word{i}) \to X(\word{i}')$, called the \emph{cyclic rotation}. \cite[Theorem 5.31]{CGGLSS}.
\end{enumerate}
\end{theorem}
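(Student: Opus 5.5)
Since this theorem collects results of \cite{CGGLSS}, the plan is to recall the structure of their argument rather than reprove it from scratch. The approach is inductive along a complete Demazure weave $\weave\colon \word{i} \to \dem(\word{i})$, using the maps between braid varieties attached to the arrows $(\mathrm{A}_1)$ and $(\mathrm{A}_2)$.

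\textbf{Item (1).} Each $(\mathrm{A}_1)$ arrow gives an isomorphism \eqref{eq:2m-valent}. Each $(\mathrm{A}_2)$ arrow gives the principal open inclusion \eqref{eqa:3-valent}, whose extra $\C^\times$-coordinate is, in affine coordinates, the nonvanishing parameter at the merged pair $ii$. Composing along $\weave$ gives an open embedding
\[
(\C^\times)^{N} \cong X(\dem(\word{i})) \times (\C^\times)^{N} \hookrightarrow X(\word{i}),
\]
where $N$ is the number of $(\mathrm{A}_2)$ arrows and $X(\dem(\word{i}))$ is a point. Up to a monomial change of variables, the coordinates of this torus are the candidate cluster variables, one for each $(\mathrm{A}_2)$ arrow.
\begin{itemize}
\item Frozen arrows are exactly those where the Demazure product of the remaining word drops. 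For these, the corresponding function should be shown to be a unit on all of $X(\word{i})$, which is why they are frozen.
\item The exchange matrix is read off from local intersections of Lusztig cycles in the weave.
\item Two weaves related by the weave equivalence moves give seeds related by mutation, or by no change. This yields a single cluster structure.
\item One must also show that the upper cluster algebra equals $\C[X(\word{i})]$. Here I would use that $X(\word{i})$ is smooth and affine, and that the tori from single mutations cover it up to codimension two. Hartogs' lemma then gives the equality, and local acyclicity identifies the cluster algebra with the upper cluster algebra.
\end{itemize}

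\textbf{Item (2).} Fix an index $k$. I would choose a left inductive weave that first processes the letters $i_1\cdots i_k$. Then $z_k$ is either forced to vanish, when the Demazure product does not grow at that step, or it equals, up to invertible frozen factors, a monomial in the cluster variables created at the $(\mathrm{A}_2)$ vertices involving position $k$.

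\textbf{Item (3).} Cyclic rotation is given by an explicit isomorphism: conjugate by a lift of $w_0$ and move $B_{i_1}(z_1)$ to the end as $B_{i_1^*}$. Given a weave for $\word{i}$, I would construct a weave for $\word{i}'$ whose seed is carried to the original seed. Equality can fail only on frozen variables, up to monomials, which is exactly a quasi-isomorphism.

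\textbf{Main obstacle.} I expect the hardest step to be the independence of the seed from the choice of weave, equivalently that weave equivalences induce mutations. This is where the detailed analysis of Lusztig cycles under braid moves in \cite[Section 5]{CGGLSS} is needed.
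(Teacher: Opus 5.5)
Your sketches of items (1) and (3) follow the structure of \cite{CGGLSS}, which the paper simply cites. For (1) you should also state that any two complete Demazure weaves $\word{i}\to\dem(\word{i})$ are connected by equivalence moves and weave mutations. Without that, ``moves induce mutations'' does not place every weave's seed in one cluster structure.

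\textbf{The gap is in item (2), which fails as stated.} Your vanishing criterion is wrong.
\begin{itemize}
\item For $\word{i}=11$ the Demazure product does not grow at step $2$, yet $X(11)=\{z_1z_2=1\}$, so $z_2$ never vanishes.
\item For $\word{i}=1$ the Demazure product does grow, and $z_1\equiv 0$.
\end{itemize}
Vanishing is governed by the global condition that $k$ is an essential crossing, $\dem(\word{i}\setminus k)<\dem(\word{i})$. No single step of a prefix-resolving weave detects this.

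Moreover, a left-to-right weave does not exhibit $z_k$ as a cluster monomial of its seed. Take $\word{i}=111$.
\begin{itemize}
\item Merging positions $1,2$ has torus coordinate $z_2$.
\item Sliding $U(z_2)$ past $B(z_3)$ gives $z_3'=z_2^2z_3-z_2$, as in Example~\ref{ex:running-transitions}.
\item The last merge forces $z_1-z_2^{-1}-z_3'^{-1}=0$.
\end{itemize}
So $z_1=z_2^{-1}+z_3'^{-1}$ and $z_3=(z_2+z_3')/z_2^2$ are not Laurent monomials in the coordinates of this chart. Hence they are not cluster monomials of this seed, even though neither position is essential. They become cluster monomials only after mutating at the mutable vertex.

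\textbf{What works instead.} Treat an extremal letter whose coordinate the weave leaves untouched until its last trivalent vertex.
\begin{itemize}
\item With this paper's sliding convention, first resolve $i_2\cdots i_r$ into a reduced word for $w'=\dem(i_2\cdots i_r)$. This never changes $z_1$.
\item If $s_{i_1}w'>w'$, equivalently position $1$ is essential, then $i_1\word{i}(w')$ is reduced for $\dem(\word{i})$. All coordinates on it vanish, so $z_1\equiv 0$.
\item Otherwise, braid-move to $i_1\word{i}(s_{i_1}w')$ and merge. Vanishing of the final coordinates gives $z_1=c^{-1}$, where $c$ is the torus coordinate of this last, Demazure-frozen, vertex. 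One then checks via the Lusztig cycles that this is a cluster monomial; for $111$ one gets $z_1=(z_2-z_3^{-1})^{-1}$.
\end{itemize}
Other positions are reached by the cyclic rotation of item (3), after reducing to $\dem(\word{i})=w_0$. Up to harmless normalizations, cyclic rotation shifts $z_2,\dots,z_r$ to the first $r-1$ positions of $\word{i}'$, and it is a quasi-isomorphism. This is exactly how the paper invokes \cite[Lemma 5.24]{CGGLSS} (``up to cyclic rotation we may assume $t=1$''). So item (2) genuinely depends on item (3), and your argument for (2) does not use it.
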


Finally, we mention a graphical way of depicting weaves, as in \cite{CGGS1, CGGLSS}, adapting a symplectic-geometrically motivated construction of \cite{CasalsZaslow} to the Lie theoretic context. A braid word $\word{i} = i_1\dots i_r$ is encoded by $r$ vertical lines, colored left-to-right by the letters $i_1, \dots, i_r$. Arrows of the form ($\mathrm{A}_1$) are encoded by the following local moves, depending on the value of $m_{ij}$, which are read top-to-bottom:

\begin{center}
    \begin{tikzpicture}
    \draw[color=red] (0,2) to[out=270, in=90] (1,0);
    \draw[color=blue] (1,2) to[out=270, in=90] (0,0);
    \node at (-1, 2) {\small{$m_{ij} = 2$}};

\draw[dashed, color=lightgray] (1.3, 2.5) to (1.3,-0.5);
    \draw[color=red] (3,2) to[out=270, in=135] (3.5, 1);
    \draw[color=blue] (3.5, 2) to (3.5,1);
    \draw[color=red] (4, 2) to[out=270, in=45] (3.5,1);
    \draw[color=blue] (3.5, 1) to[out=225, in=90] (3, 0);
    \draw[color=red] (3.5, 1) to (3.5, 0);
    \draw[color=blue] (3.5, 1) to[out=315, in=90] (4, 0);
    \node at (2,2) {\small{$m_{ij} = 3$}};

\draw[dashed, color=lightgray] (4.3, 2.5) to (4.3, -0.5);
\draw[color=red] (6,2) to[out=270, in=90] (7.5, 0);
\draw[color=blue] (6.5, 2) to[out=270, in=90] (7, 0);
\draw[color=red] (7, 2) to[out=270, in=90] (6.5, 0);
\draw[color=blue] (7.5, 2) to[out=270, in=90] (6,0);
\node at (5, 2) {\small{$m_{ij} = 4$}};

\draw[dashed, color=lightgray] (7.8, 2.5) to (7.8, -0.5);
\draw[color=red] (9.5, 2) to[out=270, in=90] (12,0);
\draw[color=blue] (10, 2) to[out=280, in=90] (11.5, 0);
\draw[color=red] (10.5, 2) to[out=280, in=90] (11, 0);
\draw[color=blue] (11,2) to[out=280, in=90] (10.5, 0);
\draw[color=red] (11.5, 2) to[out=280, in=90] (10, 0);
\draw[color=blue] (12,2) to[out=280, in=90] (9.5, 0);
\node at (8.5, 2) {\small{$m_{ij} = 6$}};
    \end{tikzpicture}
\end{center}
On the other hand, arrows of the form ($\mathrm{A}_2$) are encoded by the following local move:
\begin{center}
\begin{tikzpicture}
        \draw[color=red] (3,2) to[out=270, in=135] (3.5, 1);
    \draw[color=red] (4, 2) to[out=270, in=45] (3.5,1);
    \draw[color=red] (3.5, 1) to (3.5, 0);
\end{tikzpicture}
\end{center}
For this reason we will sometimes refer to arrows of type ($\mathrm{A}_2$) as \emph{trivalent vertices} of a weave.

\section{The cluster dilation group of braid varieties}\label{sec:cluster-dilation-braid}

\subsection{Cluster dilation groups} Let us recall that, if $A$ is a cluster algebra, an algebra automorphism $\varphi: A \to A$ is called a \emph{cluster dilation} if for every cluster variable $z \in A$ there exists a nonzero scalar $\lambda_z \in \C^{\times}$ such that $\varphi(z) = \lambda_z z$. The set of all cluster dilations forms a group, the \emph{cluster dilation group} of $A$. It has an explicit description, as follows. Let us fix a seed $\seed = (\widetilde{B}, \widetilde{\mathbf{x}})$ of $A$. The $(n+m) \times n$ extended exchange matrix $\widetilde{B}$ defines a map:
\[
\mathsf{mult}(\widetilde{B}^{T}): (\C^{\times})^{n+m} \to (\C^{\times})^n
\]
and the cluster dilation group is isomorphic to $\ker(\mathsf{mult}(\widetilde{B}^{T}))$. In particular, if $\widetilde{B}$ has \emph{really full rank}, that is, if the $\Z$-span of the rows of the matrix $\widetilde{B}$ is the entire $\Z^n$,  then the cluster dilation group is a torus of rank $m$, the number of frozen variables of $A$.

We will be interested in the case where $A = \C[X(\word{i})]$, the coordinate ring of the braid variety $X(\word{i})$, with its cluster structure from Section \ref{sec:braid-varieties}. We will denote the cluster dilation group by $\dil{\word{i}}$. It is a torus of rank equal to the number of frozen variables, that we denote by $\frozen{\word{i}}$ \footnote{When $\word{i}$ is a word associated to an open Richardson variety $\orich{v,w}$, $\frozen{\word{i}}$ is known to be the Kazhdan-Lusztig $d$-invariant of the Bruhat interval $[v, w]$, cf. Proposition \ref{prop:KL-d-invariant} below.}

Recall that the variety $X(\word{i})$ comes equipped with affine coordinates $z_1, \dots, z_r$, where $r$ is the length of $\word{i}$. By Theorem~\ref{thm:cluster-structure}.(2), each coordinate $z_t$ is either zero or a cluster monomial. First we recall the characterization of those indices $t$  for which $z_t$ is identically zero. 

\begin{definition}
    Let $\word{i} = i_1\cdots i_r$ be a braid word. We say that $t$ is an {\it essential crossing} of $\word{i}$ if $\dem(i_1\cdots {i_{t-1}i_{t+1}\cdots i_r}) < \dem(\word{i})$. 
\end{definition}

\begin{lemma}
    The following are equivalent:
    \begin{enumerate}
    \item The crossing $t$ is essential in $\word{i}$.
    \item The variable $z_t$ is identically zero on $X(\word{i})$. 
    \end{enumerate}
\end{lemma}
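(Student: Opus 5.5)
The plan is to reformulate the vanishing of $z_t$ as a statement about flags and then compare Demazure products. Write $g_{t-1}=B_{i_1}(z_1)\cdots B_{i_{t-1}}(z_{t-1})$, so that $\flag{t-1}=g_{t-1}\borel_+$ and $\flag{t}=g_{t-1}B_{i_t}(z_t)\borel_+$.

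\emph{Step 1: a flag-theoretic description of $z_t$.} From $B_i(z)=x_i(z)\dot s_i$ we see that the flags $g_{t-1}B_{i_t}(c)\borel_+$, for $c\in\C$, sweep out the affine line of flags in position $s_{i_t}$ from $\flag{t-1}$. The value $c=0$ picks out the point $g_{t-1}\dot s_{i_t}\borel_+$. When $z\neq 0$ we have
\[
\begin{pmatrix} z&-1\\1&0\end{pmatrix}=\begin{pmatrix}1&0\\z^{-1}&1\end{pmatrix}\begin{pmatrix}z&-1\\0&z^{-1}\end{pmatrix},
\]
so $B_i(z)\borel_+=y_i(z^{-1})\borel_+\ne \dot s_i\borel_+$. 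Hence $z_t=0$ at a point of $X(\word{i})$ if and only if $\flag{t}=g_{t-1}\dot s_{i_t}\borel_+$. Equivalently, in the affine realization \eqref{eq:affine-realization-braid-variety}, $\{z_t=0\}$ is the set of points where the $t$-th factor is the constant $\dot s_{i_t}=B_{i_t}(0)$. Since $X(\word{i})$ is smooth and irreducible of dimension $r-\ell(\dem(\word{i}))$, $z_t\equiv 0$ holds if and only if the open set $\{z_t\neq0\}$ is empty.

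\emph{Step 2: (1) $\Rightarrow$ (2).} Suppose $t$ is essential, so that $\dem(\word{i}\setminus t)<\dem(\word{i})$, and assume a point with $z_t\neq 0$ exists. Using the factorization above, I would write $B_{i_t}(z_t)=y_{i_t}(z_t^{-1})\,b$ with $b\in\borel_+$, and push $b$ to the right through the remaining factors $B_{i_{t+1}}(z_{t+1})\cdots B_{i_r}(z_r)$. Each $B_j(z)\in \borel_+\dot s_j\borel_+$, and $b B_j(z)\in \borel_+\dot s_j\borel_+$, so this rewrites the tail as an element of $\overline{\borel_+\dem(i_{t+1}\cdots i_r)\borel_+}$. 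For the head, $g_{t-1}y_{i_t}(z_t^{-1})$ should be controlled by the observation that $\flag{t}$ is then in generic position on the $\mathbb{P}^1$ through $\flag{t-1}$, not at the special point $g_{t-1}\dot s_{i_t}\borel_+$. The aim is to show that the total product lies in $\bigcup_{x\le \dem(\word{i}\setminus t)}\borel_+x\borel_+$. This uses $\borel_+u\borel_+\cdot\borel_+v\borel_+\subseteq\overline{\borel_+(u*v)\borel_+}$ together with the fact that the "non-$\dot s$" choice effectively deletes the letter. The result contradicts $\dem(\word{i})^{-1}(\text{product})\in\borel_+$.

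\emph{Step 3: (2) $\Rightarrow$ (1).} I would argue by contrapositive: if $\dem(\word{i}\setminus t)=\dem(\word{i})$, exhibit a point with $z_t\ne0$. Take a point of $X(\word{i}\setminus t)$, which is nonempty, and reinsert the $t$-th letter by choosing $\flag{t}$ on the $\mathbb{P}^1$ of flags in position $\le s_{i_t}$ from $\flag{t-1}$. By Theorem~\ref{thm:escobar_stratification}, the stratum $\brick(\word{i})^{\word{i}\setminus t}\cong X(\word{i}\setminus t)$ lies in the closure of $X(\word{i})$. Near such a point, $\flag{t}$ is close to $\flag{t-1}=g_{t-1}\borel_+$, which corresponds to $z_t\to\infty$ and in particular is far from the special point $g_{t-1}\dot s_{i_t}\borel_+$. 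Hence $z_t\neq0$ on a nonempty open subset of $X(\word{i})$ near this stratum.

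\emph{Main obstacle.} The hardest step is the Bruhat-cell bookkeeping in Step 2. There one must check that the factor $y_{i_t}(z_t^{-1})$, rather than $\dot s_{i_t}$, genuinely drops the Demazure product to that of $\word{i}\setminus t$, since a naive estimate only gives $\dem(\word{i})$ again. If this direct manipulation becomes messy, I would instead compare dimensions. The locus $\{z_t\neq0\}$ maps to a braid-variety-type space for $\word{i}\setminus t$ with an extra $\C^\times$ coordinate, in the spirit of the inclusion \eqref{eqa:3-valent}. Such a space is empty precisely when $\dem(\word{i}\setminus t)<\dem(\word{i})$.
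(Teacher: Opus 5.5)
Your Steps 1 and 3 are fine. Step 3 is really the chart argument of Section 4. On the chart $C_{\word{i}}(\phi)$ with $\phi$ equal to $-$ only at $t$, the flag $\flag{t}=g_{t-1}y_{i_t}(c)\borel_+$ is never the special point. This chart contains the nonempty stratum $X(\word{i}\setminus t)$, and by irreducibility of $\brick(\word{i})$ it meets $X(\word{i})$, giving points with $z_t=c^{-1}\neq 0$.

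\textbf{The gap is Step 2, and it is not mere bookkeeping.} Your intermediate goal of placing the product in $\bigcup_{x\le\dem(\word{i}\setminus t)}\borel_+x\borel_+$ is false, because two-sided Bruhat cells do not detect the phenomenon. For $\group=\SL_2$, $\word{i}=1$, $t=1$, the crossing is essential. Yet $B_1(z)=x_1(z)\dot s_1\in\borel_+s_1\borel_+$ for every $z$, including $z\neq 0$. More generally, for a reduced word every crossing is essential, while $B_{i_1}(z_1)\cdots B_{i_r}(z_r)\in\borel_+\pi(\word{i})\borel_+$ for all $z$. Since $y_{i_t}(c)\in\borel_+s_{i_t}\borel_+$ for $c\neq 0$, the ``non-$\dot s$ choice'' does not delete the letter in any double-coset sense. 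What actually forces $z_t=0$ is the one-sided condition $\flag{r}=\dem(\word{i})\borel_+$; in the example, $\dot s_1^{-1}x_1(z)\dot s_1\in\borel_+$ iff $z=0$. Your sketch only invokes this condition in the final line. Your fallback has the same problem. An ($\mathrm{A}_2$)-type identification of $\{z_t\neq 0\}$ with $X(\cdot)\times\C^\times$ exists only when the factor $y_{i_t}(z_t^{-1})$ can be absorbed, e.g.\ when $i_{t-1}=i_t$, where $B_i(z)y_i(c)=B_i(z-c)$. For general $t$, building such a map is the whole difficulty.

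\textbf{The missing idea is to handle $t=1$ first, where the $y$-factor can be pushed onto the endpoint flag.} Suppose $z_1\neq 0$. Write $B_{i_1}(z_1)=y_{i_1}(z_1^{-1})U_{i_1}(z_1)$ and slide $U_{i_1}(z_1)$ to the right. This gives $B_{i_2}(z_2')\cdots B_{i_r}(z_r')\borel_+=y_{i_1}(-z_1^{-1})\dem(\word{i})\borel_+=\dem(\word{i})\borel_+$. The last equality holds because $i_1$ is a left descent of $\dem(\word{i})$, so $\dem(\word{i})^{-1}(-\alpha_{i_1})$ is a positive root. Hence a path of type $i_2\cdots i_r$ starting at $\borel_+$ ends at $\dem(\word{i})\borel_+$. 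This forces $\dem(\word{i})\le\dem(i_2\cdots i_r)$, i.e.\ crossing $1$ is not essential.

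The general case is then reduced to $t=1$, which is exactly the paper's proof. After padding to Demazure product $w_0$, it uses cyclic rotation to move crossing $t$ to the front and cites \cite[Lemma 5.24.(1)]{CGGLSS}. Without this reduction, or some other device relating the special point $g_{t-1}\dot s_{i_t}\borel_+$ to the endpoint $\dem(\word{i})\borel_+$, your implication (1) $\Rightarrow$ (2) remains unproved.
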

\begin{proof}
   Up to cyclic rotation we may assume $t = 1$, and the result follows from \cite[Lemma 5.24.(1)]{CGGLSS}.
\end{proof}

We denote by $\noness{\word{i}} := \{t \in [r] \mid t \; \text{is not an essential crossing of} \; \word{i}\}$. For every $t \in I_{\word{i}}$, the variable $z_t$ is a nonzero cluster monomial, so the torus $\dil{\word{i}}$ acts on $z_t$ by a character $\zeta_t: \dil{\word{i}} \to  \C^{\times}$. We arrange these in a map
\begin{equation}\label{eq:zeta-embedding-1}
\zeta: \dil{\word{i}} \to (\C^{\times})^{\noness{\word{i}}}.
\end{equation}
By definition, the group $\dil{\word{i}}$ acts faithfully on $\C[X(\word{i})]$, so $\zeta$ is an injective map. In other words, $\dil{\word{i}}$ is the subgroup of $(\C^{\times})^{\noness{\word{i}}}$ consisting of those elements that make every cluster variable (thought of as a polynomial in the variables $z_1, \dots, z_r$) homogeneous. We can embed $(\C^{\times})^{\noness{\word{i}}} \hookrightarrow (\C^{\times})^{r}$ by making the coordinates associated to essential crossings equal to $1$. We will sometimes need the composition
\begin{equation}\label{eq:zeta-embedding-2}
\dil{\word{i}} \xrightarrow{\zeta} (\C^{\times})^{\noness{\word{i}}} \hookrightarrow (\C^{\times})^{r}.
\end{equation}

\subsection{Weaves and the cluster dilation group} In this section, we study the interaction between the cluster dilation group of braid varieties and the calculus of weaves.  Our goal is to prove that a weave $\weave: \word{i} \to \word{j}$ induces a map of cluster dilation groups $\dil{\word{i}} \to \dil{\word{j}}$, where $\word{i}$ and $\word{j}$ are braid words.

\begin{lemma}\label{lem:easy-2m-valent}
The isomorphism \eqref{eq:2m-valent} induces an isomorphism of cluster dilation groups $\dil{\word{i}} \cong \dil{\word{j}}$.
\end{lemma}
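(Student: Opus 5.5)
The plan is to show that the isomorphism \eqref{eq:2m-valent}, $\Phi: X(\word{j}) \to X(\word{i})$, is a \emph{cluster isomorphism}. By this we mean that the pullback $\Phi^*: \C[X(\word{i})] \to \C[X(\word{j})]$ sends the set of cluster variables of $\C[X(\word{i})]$ bijectively onto the set of cluster variables of $\C[X(\word{j})]$, respecting frozen variables. Granting this, the statement is formal. Given $\varphi \in \dil{\word{i}}$, set $\psi := (\Phi^*)\circ\varphi\circ(\Phi^*)^{-1}$. If $x$ is a cluster variable of $\C[X(\word{j})]$, then $(\Phi^*)^{-1}(x)$ is a cluster variable of $\C[X(\word{i})]$. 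Hence $\varphi$ rescales it by a scalar, and so $\psi(x)=\lambda x$. Thus $\psi\in\dil{\word{j}}$. Conjugation by $(\Phi^*)^{-1}$ gives the inverse map, so we obtain a group isomorphism $\dil{\word{i}}\cong\dil{\word{j}}$. It is an isomorphism of algebraic tori, since both are cut out as $\ker\mathsf{mult}(\widetilde B^T)$ in the same coordinates under matching seeds.

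To see that $\Phi^*$ is a cluster isomorphism, I use Theorem~\ref{thm:cluster-structure}(1), that seeds come from complete Demazure weaves. Fix a complete Demazure weave $\weave:\word{j}\to\dem(\word{j})$. Precomposing it with the ($\mathrm{A}_1$) arrow $\word{i}\to\word{j}$ gives a complete Demazure weave $\weave':\word{i}\to\dem(\word{i})$; note that $\dem(\word{i})=\dem(\word{j})$ because the Demazure product factors through $\Br^+_W$. The two weaves have the same ($\mathrm{A}_2$) arrows, in the same order and with the same frozen/mutable status. This is because frozenness of an ($\mathrm{A}_2$) arrow is defined purely in terms of the words at that step, and those words are identical after the first step.

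The key step is then to check, from the construction in \cite{CGGLSS}, that the cluster variables attached to a trivalent vertex of $\weave'$ pull back under $\Phi^*$ to those attached to the same vertex of $\weave$, and likewise that the exchange matrices coincide. In \cite{CGGLSS} these variables are defined through the maps \eqref{eq:2m-valent} and \eqref{eqa:3-valent} along the weave. So the variables for $\weave'$ are exactly the variables for $\weave$ composed with the first isomorphism, i.e.\ their pullbacks along $\Phi$. The quivers are defined from Lusztig cycles that are propagated through the ($\mathrm{A}_1$) vertices without change of intersection data. Hence the seeds correspond under $\Phi^*$, and because mutations commute with $\Phi^*$, all cluster variables correspond. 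This is a special case of the weave-equivalence invariance in \cite{CGGLSS}, which also yields independence of the seed. The main obstacle is precisely this verification that precomposing with a $2m$-valent vertex transports seeds, including matching the normalization of the variables. If direct inspection of \cite{CGGLSS} proves awkward, the fallback is to note that $\Phi$ is $T$-equivariant and acts on affine coordinates by explicit polynomial substitutions. By Theorem~\ref{thm:cluster-structure}(2), these coordinates are cluster monomials, and one checks homogeneity directly.
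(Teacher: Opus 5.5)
Your proposal is correct and follows essentially the paper's route: the paper likewise reduces everything to the fact that pullback along the braid-move isomorphism sends cluster monomials to cluster monomials, which it simply cites from \cite{GLSB} and \cite[Proposition 4.46]{CGGLSS} where you sketch it by precomposing a weave with the $2m$-valent vertex, and then transports dilations across. The only difference is packaging: the paper writes the map concretely as $\xi\mapsto(\xi^{\wt(f_1)},\dots,\xi^{\wt(f_r)})$ via weights of affine coordinates, which is the form reused in Lemma \ref{lem:iso-tori}, rather than as conjugation by $\Phi^*$.
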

\begin{proof}
    Let us denote by $z_1, \dots, z_r$ the affine coordinates on $X(\word{i})$ and by $w_1, \dots, w_r$ the affine coordinates on $X(\word{j})$, so that the pullback of the isomorphism \eqref{eq:2m-valent} is the algebra isomorphism $\varphi^{\ast}: \C[X(\word{i})] \to \C[X(\word{j})]$, $\varphi^{\ast}(z_i) = f_i(w_1, \dots, w_r)$. We claim that the elements $f_i(w_1, \dots, w_r)$ are homogeneous with respect to the $\dil{\word{j}}$-action on $\C[X(\word{j})]$. By \cite[Sects. 4.4, 4.5]{GLSB}, see also \cite[Proposition 4.46]{CGGLSS}, the isomorphism $\varphi^{\ast}$ sends cluster monomials to cluster monomials. So $f_i(w_1, \dots w_r) \in \C[X(\word{j})]$ is in fact a cluster monomial, and is thus homogeneous with respect to the $\dil{\word{j}}$-action. We denote by $\wt(f_t)$ the $\dil{\word{j}}$-weight of $f_t$, and make the convention that $\wt(f_t) = 0$ if $f_t = 0$. 

    Let us define a morphism $\Phi: \dil{\word{j}} \to (\C^{\times})^{r}$ by $\xi \mapsto (\xi^{\wt(f_1)}, \dots, \xi^{\wt(f_r)})$. We claim that its image is contained in $\dil{\word{i}}$, where we embed $\dil{\word{i}}$ in $(\C^{\times})^{r}$ via \eqref{eq:zeta-embedding-2}. First, note that if $t$ is an essential crossing of $\word{i}$ then $z_t$, and thus $f_t$, are zero, so $\xi^{\wt(f_i)} = 1$ and the image of $\dil{\word{j}}$ is contained in (the image of) $(\C^{\times})^{\noness{\word{i}}}$, as needed. Now we need to show that if $x \in \C[X(\word{i})]$ is a cluster variable then an element of the form $\Phi(\xi)$ acts homogeneously on it. But if $x$ is a cluster variable then $\varphi^{\ast}(x) \in \C[X(\word{j})]$ is also a cluster variable and it is easy to verify that $\varphi^{\ast}(\Phi(\xi)x) = \xi^{\wt(\varphi^{\ast}(x))}\varphi^{\ast}(x)$. Since $\varphi^{\ast}$ is an isomorphism, $\Phi(\xi)x = \xi^{\wt^{\varphi^{\ast}(x)}}x$ and it follows that $\Phi(\dil{\word{j}})$ is indeed contained in $\dil{\word{i}}$. That $\Phi: \dil{\word{j}} \to \dil{\word{i}}$ is an isomorphism follows by considering the inverse map $\C[X(\word{i})] \to \C[X(\word{j})]$.  
\end{proof}

\begin{lemma}\label{lem:3valent-dilation}
The map \eqref{eqa:3-valent} induces a map of cluster dilation groups $\dil{\word{i}} \to \dil{\word{j}}$.
\end{lemma}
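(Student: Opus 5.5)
The plan is to follow the template of Lemma~\ref{lem:easy-2m-valent}, with the extra $\C^{\times}$ factor handled by the cluster structure. Write $\word{i} = \word{i}_1(ii)\word{i}_2$ and $\word{j} = \word{i}_1(i)\word{i}_2$, and let $\psi: X(\word{j}) \times \C^{\times} \hookrightarrow X(\word{i})$ be the map \eqref{eqa:3-valent}. Its pullback $\psi^{\ast}: \C[X(\word{i})] \to \C[X(\word{j})][u^{\pm 1}]$ is injective and becomes an isomorphism after localizing $\C[X(\word{i})]$ at the cluster variable $x_0$ attached to the trivalent vertex; here $u = \psi^{\ast}(x_0)$.

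The first step is to recall from the construction in \cite{CGGLSS} (and \cite[Sects.~4.4, 4.5]{GLSB}) how the seeds match. Pick a complete weave $\weave': \word{j} \to \dem(\word{j})$ and precompose it with the trivalent arrow $\word{i} \to \word{j}$ to get a complete weave $\weave$ for $\word{i}$. The seed $\seed_{\weave}$ then consists of $x_0$ together with the variables of $\seed_{\weave'}$. The key fact I would cite is that $\psi^{\ast}$ sends each cluster variable of $\seed_{\weave}$ other than $x_0$ to the corresponding cluster variable of $\seed_{\weave'}$, possibly times a monomial in $u$. Moreover, the exchange matrix of $\seed_{\weave'}$ is obtained from that of $\seed_{\weave}$ by freezing or deleting the row of $x_0$. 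The same comparison applies to the affine coordinates. The pullbacks $\psi^{\ast}(z_t)$ are, up to powers of $u$, the affine coordinates $w_s$ of $X(\word{j})$ for $t \neq$ the merged positions; the two merged positions go to $u$ and to an expression in $w_{s}$ and $u$.

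With this in hand, I would define the map by restriction. Let $\xi \in \dil{\word{j}}$ act on $\C[X(\word{j})][u^{\pm1}]$ by $\xi \cdot u = u$, and on $\C[X(\word{j})]$ as given. Using the formula $\mathsf{mult}(\widetilde{B}^{T})$ description, the dilation $\xi$ is determined by its characters on $\seed_{\weave'}$. I would extend it to a character vector on $\seed_{\weave}$ by assigning weight $1$ to $x_0$. The main check is that this vector lies in $\ker\mathsf{mult}(\widetilde{B}_{\weave}^{T})$. For mutable columns of $\seed_{\weave}$ that survive in $\seed_{\weave'}$, the condition is exactly the one for $\xi$, since $x_0$ has trivial weight. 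The column of $x_0$, if $x_0$ is mutable, needs separate care.

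If that column causes trouble, I would instead define the map in the other direction of naturality. Since $\dil{\word{i}}$ acts on $\C[X(\word{i})][x_0^{-1}] \cong \C[X(\word{j})][u^{\pm1}]$ by cluster dilations, its action preserves the subalgebra generated by the cluster variables of $\seed_{\weave'}$. I would check this via the seed comparison above: these variables are the images of cluster variables of $\word{i}$ divided by powers of $x_0$, hence homogeneous. One also needs that this subalgebra is identified with $\C[X(\word{j})]$, that is, the degree-zero part in $u$ after normalizing. The action on it is by rescaling each cluster variable of $\seed_{\weave'}$, and it is compatible with mutations in $\weave'$ by the Laurent phenomenon. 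So it defines an element of $\dil{\word{j}}$, and the assignment is clearly a homomorphism.

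This direction, $\dil{\word{i}} \to \dil{\word{j}}$, is in fact the one in the statement, and I would commit to it. The main obstacle is the precise bookkeeping of how $\psi^{\ast}$ transforms cluster variables: which of them acquire powers of $u$, and why dividing by those powers still yields homogeneous elements. I would handle this by invoking the explicit description of cluster variables along weaves in \cite[Sect.~5]{CGGLSS}, where variables downstream of the trivalent vertex are products of $u$-powers and cluster variables of $\word{j}$.
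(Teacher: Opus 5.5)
Your outline (localize at $x_0=z_{t_0}$, let $\dil{\word{i}}$ act on $\C[X(\word{i})][x_0^{-1}]\cong\C[X(\word{j})][u^{\pm1}]$, then restrict) is the right one. However, the committed argument has a genuine gap at the step ``it is compatible with mutations in $\weave'$ by the Laurent phenomenon. So it defines an element of $\dil{\word{j}}$.''

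All you have shown is that $\xi\in\dil{\word{i}}$ rescales the variables $y_j$ of the single seed $\seed_{\weave'}$, say by scalars $c_j$. Such a rescaling extends to a cluster dilation of $\C[X(\word{j})]$ if and only if $(c_j)\in\ker\mathsf{mult}(\widetilde{B}_{\weave'}^{T})$, i.e. every exchange binomial of $\seed_{\weave'}$ is homogeneous. The Laurent phenomenon gives uniqueness of the extension, not its existence. This is exactly the ``main check'' you flagged in your first attempt, and it does not come for free here. Writing $\psi^{\ast}(x_j)=y_ju^{a_j}$, it needs a compatibility between the exponents $a_j$ and the row of $x_0$ in $\widetilde{B}_{\weave}$ (e.g.\ $b_{0k}+\sum_j a_jb_{jk}=0$, which says that $\psi^{\ast}$ preserves exchange ratios). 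Your single-seed comparison does not supply it. Your fallback, that the subalgebra generated by the cluster variables of $\seed_{\weave'}$ is identified with $\C[X(\word{j})]$, is false in general, since one cluster does not generate the cluster algebra.

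The paper's missing ingredient is that the isomorphism $\C[X(\word{j})][u^{\pm1}]\cong\C[X(\word{i})][z_{t_0}^{-1}]$ is a cluster quasi-isomorphism \cite[Lemma 4.19]{CGSS}. Here $z_{t_0}$ is frozen in the localization, which is a cluster algebra by \cite[Theorem 5.35]{CGGLSS}, and $u$ is a disjoint frozen variable. Hence every cluster monomial of $\C[X(\word{j})]$, in particular every nonzero affine coordinate $w_s$, corresponds to a cluster monomial of $\C[X(\word{i})]$ times a power of $z_{t_0}$. Such elements are $\dil{\word{i}}$-homogeneous. One then defines $\xi\mapsto(\xi^{\wt(w_1)},\dots,\xi^{\wt(w_r)})$ exactly as in Lemma~\ref{lem:easy-2m-valent}, with no seed-level bookkeeping.

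If you prefer to keep your construction, you can close the gap directly. The element $\xi$ is an automorphism of all of $\C[X(\word{j})][u^{\pm1}]$, so $\xi(y_k')$ is regular. Suppose the two monomials in $y_ky_k'=M_++M_-$ were rescaled by different scalars. Then a linear combination of $y_ky_k'$ and $y_k\,\xi(y_k')$ would show that $M_+/y_k$ lies in $\C[X(\word{j})][u^{\pm1}]$, hence in $\C[X(\word{j})]$. This is impossible, because $M_+/y_k=M_+y_k'/(M_++M_-)$ is not a Laurent polynomial in the cluster obtained by mutating $\seed_{\weave'}$ at $k$. That is the precise sense in which the Laurent phenomenon enters.
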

\begin{proof}
Let $z_1, \dots, z_{r+1}$ be the affine coordinates on $X(\word{i})$ and $w_1, \dots, w_r$ the affine coordinates on $X(\word{j})$. Moreover, let $t_0$ be the index corresponding to the second $i$ in the middle $ii$ of $\word{i}$. Then just as in the proof of Lemma \ref{lem:easy-2m-valent} we have an isomorphism
\[
\varphi^{\ast}: \C[X(\word{j})][w^{\pm 1}] \xrightarrow{\cong} \C[X(\word{i})][z_{t_0}^{-1}],
\]
for a new variable $w$ corresponding to the $\CC^\times$ factor on the left hand side of  \eqref{eqa:3-valent}.
There exists a seed of $\C[X(\word{i})]$ on which the variable $z_{t_0}$ is a cluster variable (take, for example, any weave whose top trivalent vertex corresponds to this move $ii \to i$, cf. \cite[Theorem 5.19]{CGGLSS}), so the algebra $\C[X(\word{i})][z_{t_0}^{-1}]$ admits a cluster structure given by freezing the $z_{t_0}$-variable, see \cite[Theorem 5.35]{CGGLSS} and \cite[Lemma 3.4]{muller-locally-acyclic}. Note that every cluster monomial of $\C[X(\word{i})][z_{t_0}^{-1}]$ is a cluster monomial of $\C[X(\word{i})]$ times a (possibly negative) power of $z_{t_0}$. In particular, every cluster monomial in $\C[X(\word{i})][z_{t_0}^{-1}]$ is homogeneous with respect to the $\dil{\word{i}}$-action.

The algebra $\C[X(\word{j})][w^{\pm 1}]$ also admits a cluster structure obtained by appending a disjoint frozen variable to the cluster structure on $\C[X(\word{j})]$. The map $\varphi^{\ast}$ is a cluster quasi-isomorphism, see \cite[Lemma 4.19]{CGSS}, so it sends cluster monomials to cluster monomials. Now the proof continues as in that of Lemma \ref{lem:easy-2m-valent} above.
\end{proof}

    \subsection{Fixed points} We study the fixed points of the braid variety $X(\word{i})$ under the action of the cluster dilation group. Recall the maximal torus $T = \borel_+ \cap \borel_{-}$. Since the torus $T$ fixes both the standard $\borel_{+}$ and the antistandard $\borel_{-}$ flags, it acts on the braid variety $X(\word{i})$ diagonally. This action is by cluster dilations \cite[Corollary 4.29]{CGSS}\footnote{The statement in \cite{CGSS} is only for type $A$, but the same proof works in an arbitrary type.}, so we have a map $T \to \dil{\word{i}}$.

\begin{lemma}\label{lem:fixed-points-braid} 
We have that the $T$-fixed points and the $\dil{\word{i}}$-fixed points on $X(\word{i})$ coincide, that is,
\[
X(\word{i})^{T} = X(\word{i})^{\dil{\word{i}}}
\]
Moreover in terms of the affine coordinates on $X(\word{i})$:
\[
X(\word{i})^{T} = X(\word{i})^{\dil{\word{i}}} = \begin{cases} \{(0, 0, \dots, 0)\} & \text{if} \: \pi(\word{i}) = \dem(\word{i}), \\
\emptyset & \text{else}. \end{cases} 
\]
\end{lemma}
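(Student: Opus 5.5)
Since the image of $T \to \dil{\word{i}}$ lies in $\dil{\word{i}}$, we get $X(\word{i})^{\dil{\word{i}}} \subseteq X(\word{i})^{T}$ for free. So the plan is to compute $X(\word{i})^{T}$ explicitly in affine coordinates. Then we show that the resulting point, when it exists, is fixed by all of $\dil{\word{i}}$.

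\emph{Step 1: the $T$-action in affine coordinates.} By the affine realization \eqref{eq:affine-realization-braid-variety}, a point is a tuple $(z_1,\dots,z_r)$ with $\dem(\word{i})^{-1}B_{i_1}(z_1)\cdots B_{i_r}(z_r)\in\borel$. The $T$-action comes from the identity $h\,B_i(z) = B_i(\alpha_i(h) z)\, s_i^{-1}(h)$ (more precisely $B_i(z)$ conjugated by $h$). Moving $h$ through the product, one sees that $h$ acts on $z_t$ by the character $\rootf$-type root $(\word{i}_{(t-1)})(\alpha_{i_t})$ or its negative, up to a sign convention. A fixed point must therefore have $z_t = 0$ whenever this character is nontrivial. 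Roots are never zero, so every character is nontrivial, and the only candidate fixed point is $z=(0,\dots,0)$.

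\emph{Step 2: when the origin lies in $X(\word{i})$.} We have $B_i(0) = \varphi_i\begin{pmatrix}0&-1\\1&0\end{pmatrix}$, which is a lift $\dot s_i$ of $s_i$ in $N_\group(T)$. Hence
\[
B_{i_1}(0)\cdots B_{i_r}(0)
\]
is a lift of $\pi(\word{i})$. The condition $\dem(\word{i})^{-1}\dot\pi(\word{i})\in\borel$ holds if and only if $\dem(\word{i})^{-1}\pi(\word{i})$ is trivial in $W$, by the Bruhat decomposition and $N_\group(T)\cap\borel = T$. This gives the dichotomy: $X(\word{i})^T=\{0\}$ if $\pi(\word{i})=\dem(\word{i})$, and is empty otherwise.

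\emph{Step 3: the origin is $\dil{\word{i}}$-fixed.} By \eqref{eq:zeta-embedding-2}, $\dil{\word{i}}$ acts on each affine coordinate $z_t$ by scaling: essential coordinates are identically zero, and the others are scaled by $\zeta_t$. Scaling preserves $z=0$, so the origin is $\dil{\word{i}}$-fixed whenever it lies in $X(\word{i})$. Combined with the reverse inclusion, this proves both equalities.

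The main obstacle is Step 1: checking carefully that the $T$-action multiplies each $z_t$ by a nontrivial character. This is the standard computation $h B_i(z) h'^{-1} = B_i(\alpha_i(h)z)$ with $h'=s_i^{-1}(h)$, iterated along the word, so that $z_t$ is scaled by $\alpha_{i_t}$ evaluated at an element conjugate to $h$ by a Weyl group element. The scaling character is $w(\alpha_{i_t})$ for some $w\in W$, hence a root and nontrivial. I would verify this with the $\SL_2$ computation under $\varphi_i$.
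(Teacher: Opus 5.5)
Your proposal is correct and follows the same route as the paper: $X(\word{i})^{\dil{\word{i}}}\subseteq X(\word{i})^{T}$ because $T$ acts through $\dil{\word{i}}$; $T$-fixedness forces all affine coordinates to vanish; the origin lies in $X(\word{i})$ exactly when $\pi(\word{i})=\dem(\word{i})$; and homogeneity of the $z_t$ makes the origin $\dil{\word{i}}$-fixed. The only cosmetic difference is the vanishing step: the paper notes that $T$-fixed points consist of coordinate flags, whereas you compute directly that $T$ scales each $z_t$ by the nontrivial root character $\word{i}_{(t-1)}(\alpha_{i_t})$.
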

\begin{proof}
Since $T$ acts by cluster dilations, we have $X(\word{i})^{\dil{\word{i}}} \subseteq X(\word{i})^{T}$. In terms of flags, a point fixed under $T$ must consist entirely of coordinate flags. This forces the affine coordinates of such a point to be $(0,0, \dots, 0)$, and it is straightforward to check that this point in fact belongs to the braid variety if and only if $\pi(\word{i}) = \dem(\word{i})$. Since the affine coordinates are $\dil{\word{i}}$-homogeneous, this point is also fixed under the $\dil{\word{i}}$-action. 
\end{proof}

\section{Torus actions on brick varieties}\label{sec:torus-actions-bricks}
Our goal is to show that the action of $\dil{\word{i}}$ on $X(\word{i})$ admits a natural extension to $\brick(\word{i})$. Let us summarize the strategy of the proof. 
\begin{enumerate}
    \item First, we will study a cover of $\brick(\word{i})$ by natural affine open charts. This is well-known, see e.g. \cite{cheah}, but we will need the details of the construction, so we will review it carefully.
    \item One of the affine charts in the previous point is precisely the braid variety $X(\word{i})$. We will use the explicit form of the transition functions between charts to give a compatible $\dil{\word{i}}$-action (= grading on the coordinate algebra by the character lattice $\charlat(\dil{\word{i}})$) on each affine chart. 
\end{enumerate}
Part (2) is the key technical step. We need to show that, under the transition functions between affine charts, coordinates in each open chart are $\dil{\word{i}}$-homogeneous rational functions in the coordinates $z_1, \dots, z_r$ of the braid variety $X(\word{i})$, and use this to define a $\charlat(\dil{\word{i}})$-grading on the coordinate algebra of the chart.

\subsection{Affine charts} Let us study the open affine charts of $\brick(\word{i})$ and their transition functions. The index set of these charts will be a subset of
\begin{equation}\label{eq:index-set-charts}
2^{\word{i}} := \{\phi: [r] \to \{+, -\}\}.
\end{equation}
In fact, $2^{\word{i}}$ is the index set of a natural set of charts on the Bott-Samelson variety $\BS(\word{i})$, of which $\brick(\word{i})$ is a closed subvariety, and the charts on $\brick(\word{i})$ are simply intersections of these Bott-Samelson charts with the brick variety. Let us define the Bott-Samelson charts. For this, we define the elements
\[
B_i^{+}(z) := \varphi_i\begin{pmatrix} z & -1 \\ 1 & 0 \end{pmatrix}, \qquad B_i^{-}(z) := \varphi_i\begin{pmatrix}1 & 0 \\ z & 1\end{pmatrix},
\]

\begin{definition}
For $\phi \in 2^{\word{i}}$, define the chart $C(\phi) \subseteq \BS(\word{i})$ via the isomorphism
\[
\C^{r} \to C(\phi)
\]
that sends a tuple $(z_{1, \phi}, \dots z_{r, \phi})$ to the tuple of flags $(\flag{0}, \dots, \flag{r})$ so that the $t$-th flag is given by 
\[
B_{i_1}^{\phi(1)}(z_{1, \phi})B_{i_2}^{\phi(2)}(z_{2, \phi})\cdots B_{i_t}^{\phi(t)}(z_{t, \phi})\borel,
\]
and we denote $C_{\word{i}}(\phi) := C(\phi) \cap \brick(\word{i})$. 
\end{definition}

Note that $\BS(\word{i}) = \bigcup C(\phi)$ and $C(\phi)$ is open in $\BS(\word{i})$. By definition, $C(\phi)$ is an affine space, and each $C_{\word{i}}(\phi)$, when nonempty, is the affine closed subvariety of $C_{\word{i}}$ given by the equation
\[
\dem(\word{i})^{-1}B_{i_1}^{\phi(1)}(z_{1, \phi})\cdots B_{i_r}^{\phi(r)}(z_{r, \phi}) \in \borel.
\]

\begin{proposition}\label{prop:irreducible}
    For any $\phi \in 2^{\word{i}}$, the variety $C_{\word{i}}(\phi)$ is either empty or it is a smooth, irreducible affine variety of dimention $r - \ell(\dem(\word{i}))$. 
\end{proposition}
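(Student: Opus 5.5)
The plan is to identify each nonempty chart $C_{\word{i}}(\phi)$ with a braid variety for a shorter word, times an affine space. The case $\phi \equiv +$ is exactly the affine realization \eqref{eq:affine-realization-braid-variety} of $X(\word{i})$, and we already know $X(\word{i})$ is smooth, irreducible, affine and of dimension $r-\ell(\dem(\word{i}))$. For general $\phi$, let $\word{j}$ be the subword of $\word{i}$ on the positions with $\phi(t)=+$.

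\textbf{Step 1: move the lower-triangular factors to the right.} The factor $B_i^{-}(z)=\varphi_i\begin{pmatrix}1&0\\ z&1\end{pmatrix}$ lies in $\borel_-$. We commute each such factor past the $B^{+}$ factors that follow it. Concretely, $B_i^{-}(z)$ equals $\varphi_i(y_i(z))$, and the identity
\[
y_i(z)\,B_j^{+}(u)=B_j^{+}(u')\,g
\]
holds with $g\in \borel_-$ and $u'=u+(\text{polynomial in } z)$. For $j=i$ this is a direct $\SL_2$ computation. For $j\neq i$ we write $B_j^{+}(u)=x_j(u)\dot s_j$ and use that $\dot s_j^{-1}y_i(z)\dot s_j$ lies in a negative root subgroup. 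More precisely, $y_i(z)$ lies in $\unipotent_-\cap \dot s_j \unipotent_- \dot s_j^{-1}$. Moving it through $x_j(u)$ produces commutator terms in $\unipotent_-$ together with a correction of $u$ by a polynomial in $z$.

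Iterating this from right to left rewrites $B_{i_1}^{\phi(1)}(z_{1,\phi})\cdots B_{i_r}^{\phi(r)}(z_{r,\phi})$ as
\[
\Bigl(\prod_{t\in\word{j}} B_{i_t}(u_t)\Bigr)\, n ,
\]
where $n\in\unipotent_-$ and the $u_t$ are obtained from the $z_{t,\phi}$, $t\in\word{j}$, by a triangular polynomial change of variables involving the coordinates $z_{s,\phi}$ with $\phi(s)=-$. The real content is that the unipotent part can be pushed to the right while staying unipotent. Rather than track commutators explicitly, I would argue flag-theoretically. In the chart, $\flag{t}=\flag{t-1}$ can fail at a position $t$ with $\phi(t)=-$, so the cleaner route is the following. $B^-_i(z)\borel$ ranges over the open cell of $P_i/\borel$ containing $\borel$. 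Hence the chart $C(\phi)$ parametrizes tuples in which, at each minus position, $\flag{t}$ is in position $e$ or $s_{i_t}$ relative to $\flag{t-1}$, within a specified open piece.

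\textbf{Step 2: translate the defining condition.} After the substitution, the condition $\dem(\word{i})^{-1}(\cdots)\in\borel$ becomes $\dem(\word{i})^{-1}\prod_{t\in\word{j}}B_{i_t}(u_t)\, n\in \borel$. If the $n$ factor were absent, this would be the braid variety equation for $\word{j}$, and we would get
\[
C_{\word{i}}(\phi)\cong X'(\word{j},\dem(\word{i}))\times \C^{|\phi^{-1}(-)|},
\]
where $X'$ denotes the variety with last flag $\dem(\word{i})\borel$. This variety is nonempty only if $\dem(\word{j})\geq\dem(\word{i})$, and then $\dem(\word{j})=\dem(\word{i})$. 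It is smooth, irreducible and affine of dimension $|\word{j}|-\ell(\dem(\word{i}))$, so the total dimension is $r-\ell(\dem(\word{i}))$ as required.

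\textbf{Main obstacle.} The factor $n\in\unipotent_-$ is the difficulty, because $\unipotent_-$ does not lie in $\borel$. I expect this to be the main problem, and there are two ways to handle it.

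The first is to avoid Step 1 by running the induction on $r$ directly. Using the fibration forgetting the last flag, each fibre over a fixed $\flag{r-1}$ is an open subset of $\mathbb{P}^1$ with the endpoint constraint. Tracking these fibres gives smoothness, and it gives irreducibility via a smooth morphism with irreducible fibres over an irreducible base.

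The second is to note that $C_{\word{i}}(\phi)$ is an open subset of $\brick(\word{i})$, which Escobar proved is smooth and irreducible of dimension $r-\ell(\dem(\word{i}))$. An open subset of a smooth irreducible variety is smooth and irreducible of the same dimension. Affineness is clear because $C_{\word{i}}(\phi)$ is closed in $C(\phi)\cong\C^r$.

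I would therefore present the second argument as the proof, since it only uses \cite[Theorem 20]{escobar} and the openness of $C(\phi)$ in $\BS(\word{i})$. The explicit braid-variety description from Steps 1 and 2 would be kept only as a remark for the later homogeneity computations.
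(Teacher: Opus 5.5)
Your final argument is the paper's proof: a nonempty $C_{\word{i}}(\phi)$ is open in the smooth irreducible variety $\brick(\word{i})$ of dimension $r-\ell(\dem(\word{i}))$, and it is affine because it is closed in $C(\phi)\cong\C^r$. Drop the exploratory Steps 1--2 rather than keeping them as a remark, because the $j=i$ case of your sliding identity is false: in $\SL_2$ the matrix $B_i^{+}(u')^{-1}y_i(z)B_i^{+}(u)$ has upper-right entry $-z$, so it lies in $\borel_-$ only when $z=0$.
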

\begin{proof}
    If $C_{\word{i}}(\phi)$ is nonempty, then it is open in the smooth, irreducible variety $\brick(\word{i})$, so the result follows.
\end{proof}

\begin{remark}\label{rmk:equations-braid}
    Let us denote by $\phi_{+} \in 2^{\word{i}}$ the constant function so that $\phi_{+}(t) = +$ for every $t \in [r]$. By definition, $C_{\word{i}}(\phi_{+}) = X(\word{i})$, the braid variety. More generally, let $\word{j}$ be a subword of $\word{i}$ with the same Demazure product. Define $\phi_{\word{j}} \in 2^{\word{i}}$ by
    \[
    \phi_{\word{j}}(t) = \begin{cases} +, & \text{if} \; t \in \word{j} \\ -, & \text{else}. \end{cases}
    \]
    Then the braid variety $X(\word{j})$ is identified with the closed subvariety of $C_{\word{i}}(\phi_{\word{j}})$ given by $z_{t, \phi_{\word{j}}} = 0$ for $t \not \in \word{j}$. 
\end{remark}

\begin{example}\label{ex:running-1}
Let us take $\word{i} = 111$, so that $\BS(\word{i}) = (\PP^1)^3$ and $\brick(\word{i}) = (\PP^1)^2$. We denote $\phi \in 2^{\word{i}}$ by $\phi(1)\phi(2)\phi(3)$. We have eight charts in $\BS(\word{i})$:
\[
\begin{array}{rl}
C(+++) = & \{([z_{1}:1], [z_1z_2-1:z_2], [z_1z_2z_3-z_1-z_3: z_2z_3-1]) \mid z_1, z_2, z_3 \in \C\},  \\
C(++-) = & \{([z_{1}:1], [z_1z_2-1:z_2], [{z_1z_2-z_1z_3-1}: z_2-z_3]) \mid z_1, z_2, z_3 \in \C\}, \\
C(+-+) = & \{([z_1:1], [z_1-z_2:1], [z_1z_3-z_2z_3-1:z_3]) \mid z_1, z_2, z_3 \in \C\}, \\
C(+--) = & \{([z_1:1], [z_1-z_2:1], [z_1-z_2-z_3:1]) \mid z_1, z_2, z_3 \in \C\}, \\
C(-++) = & \{([1:z_1], [z_2:z_1z_2+1], [z_2z_3-1:z_1z_2z_3 - z_1 + z_3]) \mid z_1, z_2, z_3 \in \C\}, \\
C(-+-) = & \{([1:z_1], [z_2:z_1z_2+1], [z_2-z_3:z_1z_2-z_1z_3+1]) \mid z_1, z_2, z_3 \in \C\}, \\
C(--+) = & \{([1:z_1], [1:z_1+z_2], [z_3: z_1z_3 + z_2z_3+1]) \mid z_1, z_2, z_3 \in \C\}, \\
C(---) = & \{([1:z_1], [1:z_1+z_2], [1:z_1+z_2+z_3]) \mid z_1, z_2, z_3 \in \C\}. 
\end{array}
\]
Note that $C_{\word{i}}(---) = \emptyset$, while $C_{\word{i}}(\phi)$ is a smooth affine variety for any other function $\phi \in 2^{\word{i}}$. 
\end{example}

As we have seen in Example \ref{ex:running-1}, $\brick(\word{i})$ is covered by $C_{\word{i}}(\phi)$ where $\phi$ runs in a, in general, proper subset of $2^{\word{i}}$, for it may happen that $C_{\word{i}}(\phi) = \emptyset$. We can be more precise about the collections of functions that we can take to cover $\brick(\word{i})$.

\begin{definition}
    Let $\phi \in 2^{\word{i}}$. The support of $\phi$ is $\supp(\phi) := \{t \in [r] \mid \phi(t) = +\}$.
\end{definition}

We may and will consider $\supp(\phi)$ as a subword of $\word{i}$. 

\begin{lemma}
We have $\brick(\word{i}) = \bigcup C_{\word{i}}(\phi)$, where $\phi$ runs over the set of functions with $\dem(\supp(\phi)) = \dem(\word{i})$.
\end{lemma}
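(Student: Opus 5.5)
The plan is to use Escobar's stratification (Theorem \ref{thm:escobar_stratification}) to reduce the covering claim to a statement about individual strata. First, since $\BS(\word{i}) = \bigcup_{\phi \in 2^{\word{i}}} C(\phi)$, we already know that $\brick(\word{i}) = \bigcup_{\phi \in 2^{\word{i}}} C_{\word{i}}(\phi)$. It therefore suffices to show two things: every point of $\brick(\word{i})$ lies in some $C_{\word{i}}(\phi)$ with $\dem(\supp(\phi)) = \dem(\word{i})$, or equivalently, every nonempty chart $C_{\word{i}}(\phi)$ either has $\dem(\supp\phi)=\dem(\word{i})$ or is contained in a union of charts of that kind. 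I will argue pointwise, using the first formulation.

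Take a point $p = (\flag{0}, \dots, \flag{r}) \in \brick(\word{i})$. By Theorem \ref{thm:escobar_stratification}, $p$ lies in a unique stratum $\brick(\word{i})^{\word{j}}$. Here $\word{j}$ is the set of indices $t$ with $\flag{t-1} \neq \flag{t}$, and $\dem(\word{j}) = \dem(\word{i})$. I take $\phi = \phi_{\word{j}}$ as in Remark \ref{rmk:equations-braid}, so that $\supp(\phi)=\word{j}$ and the Demazure condition holds automatically. It remains to check that $p \in C(\phi_{\word{j}})$. Since $p \in \brick(\word{i})$, this is the same as $p\in C_{\word{i}}(\phi_{\word{j}})$.

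To check membership, I proceed inductively along the word. The chart $C(\phi)$ is characterized by the condition that, given $\flag{t-1} = g_{t-1}\borel$ with $g_{t-1} = B^{\phi(1)}_{i_1}(z_1)\cdots B^{\phi(t-1)}_{i_{t-1}}(z_{t-1})$, the flag $\flag{t}$ lies in the affine line $\{g_{t-1}B^{\phi(t)}_{i_t}(z)\borel : z\in\C\}$ inside the $\PP^1$ of flags $s_{i_t}$-related or equal to $\flag{t-1}$. In $\SL_2$ terms, the $+$ chart $\{[z:1]\}$ is the complement of the point $\flag{t-1}$ itself (the $z=\infty$ point). The $-$ chart $\{[1:z]\}$ is the complement of the point $g_{t-1}s_{i_t}\borel$, and it contains $\flag{t-1}$ at $z=0$. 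So for $t\in\word{j}$, $\flag{t}\neq\flag{t-1}$ lies in the $+$ line. For $t\notin\word{j}$, $\flag{t}=\flag{t-1}$ lies in the $-$ line with coordinate $0$. The coordinates are determined successively, which shows $p\in C(\phi_{\word{j}})$. This also matches Remark \ref{rmk:equations-braid}, which identifies $X(\word{j})$ with the locus $z_{t,\phi_{\word{j}}}=0$ for $t\notin\word{j}$.

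The main point to verify carefully is the identification of the missing point of each affine line in the $\PP^1$-fibration. In particular, I need to confirm that $B^{+}_i(z)\borel$ never equals the base flag, because $\begin{pmatrix} z&-1\\1&0\end{pmatrix}$ is never upper triangular. I also need $B^{-}_i(0)=1$. Both are immediate from the $\SL_2$ matrices through $\varphi_i$. With these facts the covering follows.
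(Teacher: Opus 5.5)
Your proposal is correct and follows the paper's route. The positions $t$ with $\flag{t-1}\neq\flag{t}$ form a subword $\word{j}$ with $\dem(\word{j})=\dem(\word{i})$, and the point lies in $C_{\word{i}}(\phi_{\word{j}})$. You also check the chart membership explicitly (the $+$ line misses only $\flag{t-1}$, and the $-$ line contains $\flag{t-1}$ at $z=0$), a step the paper leaves implicit.
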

\begin{proof}
Let $\flag{\bullet} = (\flag{0}, \dots, \flag{r}) \in \brick(\word{i})$. Note that the set of elements $t \in [r]$ such that $\flag{t-1} \neq \flag{t}$ must form a subword of $\word{i}$ with Demazure product equal to $\dem(\word{i})$. But this means that $\flag{\bullet}$ is contained in a set $C_{\word{i}}(\phi)$ with $\dem(\supp(\phi)) = \dem(\word{i})$. 
\end{proof}

Note that an essential crossing belongs to any subword of $\word{i}$ with the same Demazure product as $\word{i}$, so we have the following result.

\begin{corollary}\label{cor:not-essential}
    We have $\brick(\word{i}) = \bigcup C_{\word{i}}(\phi)$, where $\phi$ runs over the set of functions such that $\phi(j) = +$ for every essential crossing of $\word{i}$.
\end{corollary}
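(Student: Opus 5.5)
The statement follows from the preceding lemma together with the observation made just before the corollary. The plan is to show that every function $\phi$ with $\dem(\supp(\phi)) = \dem(\word{i})$ automatically satisfies $\phi(j) = +$ for every essential crossing $j$. Once this is shown, the family of charts in the corollary contains the family of the lemma. Since the lemma already says the smaller family covers $\brick(\word{i})$, the larger family covers it too. The reverse inclusion $\bigcup C_{\word{i}}(\phi) \subseteq \brick(\word{i})$ is trivial, because each $C_{\word{i}}(\phi)$ is by definition a subset of $\brick(\word{i})$.

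The key step is the claim that an essential crossing $j$ lies in every subword $\word{k}$ of $\word{i}$ with $\dem(\word{k}) = \dem(\word{i})$. I would prove this by contradiction, using monotonicity of the Demazure product under passing to subwords: if $\word{k}$ is a subword of $\word{k}'$, then $\dem(\word{k}) \leq \dem(\word{k}')$ in Bruhat order. This follows from the characterization recalled in the preliminaries, namely that $\dem$ of a word is the Bruhat-largest element having a reduced word occurring as a subword. A reduced word for $\dem(\word{k})$ inside $\word{k}$ also sits inside $\word{k}'$, so $\dem(\word{k}) \leq \dem(\word{k}')$.

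Now suppose $j \notin \word{k}$. Then $\word{k}$ is a subword of $\word{i}$ with the $j$-th letter deleted, $i_1\cdots i_{j-1}i_{j+1}\cdots i_r$. Monotonicity gives $\dem(\word{i}) = \dem(\word{k}) \leq \dem(i_1\cdots \widehat{i_j}\cdots i_r)$. Because $j$ is essential, the right-hand side is strictly less than $\dem(\word{i})$, which is a contradiction. Applying the claim with $\word{k} = \supp(\phi)$ gives $\phi(j) = +$ for every essential $j$, which finishes the argument.

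There is no real obstacle here. The only point needing care is the Bruhat monotonicity of $\dem$ under subwords, and the subword characterization of the Demazure product stated earlier in the paper handles it directly.
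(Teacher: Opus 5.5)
Your proof is correct and follows the paper's route. The paper deduces the corollary from the preceding lemma via the one-line observation that an essential crossing lies in every subword of $\word{i}$ with Demazure product $\dem(\word{i})$, and you supply the justification of that observation explicitly through the Bruhat monotonicity of $\dem$ under passing to subwords.
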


\subsection{Transition functions} Let $\phi_1, \phi_2 \in 2^{\word{i}}$. The purpose of this section is to describe the transition function between the charts $C_{\word{i}}(\phi_1)$ and $C_{\word{i}}(\phi_2)$. Let us first assume that $\phi_1$ and $\phi_2$ differ in a unique element $t_0 \in [r]$ and, moreover, that $\phi_1(t_0) = +$ and $\phi_2(t_0) = -$, while $\phi_1(t) = \phi_2(t) = +$ for every $t > t_0$. Note that the intersection $C_{\word{i}}(\phi_1)\cap C_{\word{i}}(\phi_2)$ is given by $z_{t_0, \phi_1} \neq 0$. Note that we have the following identity of elements of $\SL_2$:
\[
\begin{pmatrix}
z & -1 \\ 1 & 0 
\end{pmatrix} =
\begin{pmatrix}
1 & 0 \\
z^{-1} & 1 
\end{pmatrix}
\begin{pmatrix}
z & -1 \\
0 & z^{-1}
\end{pmatrix}
\]
Let us define $U_i(z) = \varphi_i\begin{pmatrix} z & -1 \\ 0 & z^{-1}\end{pmatrix} \in \borel$, where $\varphi: \SL_2 \to \group$ comes from a pinning, cf. Section \ref{sec:braid-varieties}. By \cite[Corollary 3.9]{CGGLSS}, we can \lq\lq slide\rq\rq \, the element $U_{j_0}(z_{t_0, \phi_1})$ to the right, giving us an equality
\begin{equation}\label{eq:transition-function}
\begin{array}{rl}
B_{i_1}^{\phi_1(1)}(z_{1, \phi_1})\cdots B_{i_t}^{\phi_1(t)}(z_{t, \phi_1}) & = \\ & B_{i_1}^{\phi_1}(z_{1, \phi_1})\cdots B_{i_{t_0}}^{\phi_2(t_0)}(z_{t_0, \phi_1}^{-1})B_{i_{t_0+1}}^{\phi_2(t_0+1)}(z_{t_0+1, \phi_1}')\cdots B^{\phi_2(t)}_{i_{t}}(z_{t, \phi_1}')U_{t}
\end{array}
\end{equation}
for every $t \geq t_0$, where $U_t \in \borel$ and $z'_{t, \phi_1}$ is a polynomial function in $z_{t_0, \phi_1}^{\pm 1}, z_{t_0+1, \phi_1}, \dots, z_{t, \phi_1}$ for every $t \geq t_0$. These give precisely the transition functions between $C_{\word{i}}(\phi_1)$ and $C_{\word{i}}(\phi_2)$. A general transition function is a composition of these and their inverses.

\begin{remark}\label{rmk:sliding-inverse}
    Let us elaborate on the inverse of the transition function, as this will be needed later. Note that
    \[
    \begin{pmatrix} z & -1 \\ 0 & z^{-1} \end{pmatrix}^{-1} = \begin{pmatrix} z^{-1} & 1 \\ 0 & z\end{pmatrix} = -\begin{pmatrix} -z^{-1} & -1 \\ 0 & -z\end{pmatrix}
    \]
    Thus, up to signs and recalling that $z_{t_0, \phi_2} = z_{t_0, \phi_1}^{-1}$, the inverse of the transition function is given by sliding the element $U(-z_{t_0, \phi_2})$ to the right. 
\end{remark}

\begin{example}\label{ex:running-transitions}
Let us find the transition function between $C_{\word{i}}(+++)$ and $C_{\word{i}}(--+)$ from Example \ref{ex:running-1}. We factor this into the composition of two transition functions
\[
C_{\word{i}}(+++) \dashrightarrow C_{\word{i}}(-++) \dashrightarrow C_{\word{i}}(--+).
\]
For the first transition function, assuming $z_1 \neq 0$ we have
\[
\begin{array}{rl}
\begin{pmatrix} z_1 & - 1 \\ 1 & 0 \end{pmatrix}\begin{pmatrix} z_2 & - 1 \\ 1 & 0 \end{pmatrix}\begin{pmatrix} z_3 & - 1 \\ 1 & 0 \end{pmatrix} = & 
\begin{pmatrix} 1 & 0 \\ z_1^{-1} & 1 \end{pmatrix} \begin{pmatrix} z_1 & - 1 \\ 0 & z_1^{-1} \end{pmatrix} \begin{pmatrix} z_2 & - 1 \\ 1 & 0 \end{pmatrix}\begin{pmatrix} z_3 & - 1 \\ 1 & 0 \end{pmatrix} \\
= & \begin{pmatrix} 1 & 0 \\ z_1^{-1} & 1 \end{pmatrix}  \begin{pmatrix} z_1^{2}z_2 - z_1 & - 1 \\ 1 & 0 \end{pmatrix} \begin{pmatrix} z_1^{-1} & 0 \\ 0 & z_1 \end{pmatrix} \begin{pmatrix} z_3 & - 1 \\ 1 & 0 \end{pmatrix} \\
= & \begin{pmatrix} 1 & 0 \\ z_1^{-1} & 1 \end{pmatrix}  \begin{pmatrix} z_1^{2}z_2 - z_1 & - 1 \\ 1 & 0 \end{pmatrix} \begin{pmatrix} z_1^{-2}z_3 & -1 \\ 1 & 0 \end{pmatrix} \begin{pmatrix} z_1 & 0 \\ 0 & z_1^{-1} \end{pmatrix}.
\end{array}
\]
So the transition map is given by $(z_1, z_2, z_3) \mapsto (z_1^{-1}, z_1^{2}z_2 - z_1, z_1^{-2}z_3)$. For the second transition map, assuming $z_2 \neq 0$ we have
\[
\begin{array}{rl}
\begin{pmatrix} 1 & 0 \\ z_1 & 1\end{pmatrix} \begin{pmatrix} z_2 & -1 \\ 1 & 0\end{pmatrix} \begin{pmatrix} z_3 & -1 \\ 1 & 0\end{pmatrix} = \begin{pmatrix} 1 & 0 \\ z_1 & 1\end{pmatrix} 
\begin{pmatrix} 1 & 0 \\ z_2^{-1} & 1\end{pmatrix}\begin{pmatrix} z_2^2z_3 - z_2 & -1 \\ 1 & 0\end{pmatrix}\begin{pmatrix} z_2^{-1} & 0 \\ 0 & z_2\end{pmatrix}.
\end{array} 
\]
so this transition function is given by $(z_1, z_2, z_3) \mapsto (z_1, z_2^{-1}, z_2^2z_3 - z_2)$. Thus, the transition map $C_{\word{i}}(+++) \to C_{\word{i}}(--+)$ is given by
\[
(z_1, z_2, z_3) \mapsto (z_1^{-1}, (z_1^{2}z_2 - z_1)^{-1}, (z_1^{2}z_2 - z_1)^2z_1^{-2}z_3 - (z_1^2z_2 - z_1)). 
\]

Note that, taking pullbacks, the map 
\[
\begin{array}[t]{rcl}
\C[C_{\word{i}}(--+)][z_{1,--+}^{-1}, z_{2,--+}^{-1}] &\to & \C[C_{\word{i}}(+++)][z_{1,+++}^{-1}, (z_{1,+++}^{2}z_{2,+++} - z_{1,+++})^{-1}] \\
z_{1,--+} & \mapsto  & z_{1,+++}^{-1} \\
z_{2, --+} & \mapsto & (z_{1,+++}z_{2,+++} - z_{1,+++})^{-1} \\
z_{3,--+} & \mapsto &  (z_{1,+++}^{2}z_{2,+++} - z_{1,+++})^2z_{1,+++}^{-2}z_{3,+++} - (z_{1,+++}^2z_{2,+++} - z_{1,+++})
\end{array}
\]
is an algebra isomorphism. 
\end{example}

    \subsection{Gradings} Let $\word{i} = i_1\dots i_r$ be a braid word. Let $\phi \in 2^{\word{i}}$. We may assume that $\supp(\phi)$ is a subword with Demazure product $\dem(\word{i})$. Our goal is to give $\C[C_{\word{i}}(\phi)]$ a grading by the character lattice $\charlat(\dil{\word{i}})$ such that the (pullback of the) transition map $\C[C_{\word{i}}(\phi)][z_{t,\phi}^{-1} \mid \phi(t) = -] \to \C[C_{\word{i}}(\phi_{+})]_{loc}$ is a graded algebra isomorphism, where $loc$ denotes a localization of the algebra $\C[C_{\word{i}}(\phi_{+})] = \C[X(\word{i})]$. Note that the algebra $\C[X(\word{i})]$ is graded by $\charlat(\dil{\word{i}})$, and one of the challenges is to verify that this is also the case for the localizations of interest.

Let us assume, first, that there is a unique $t_0 \in [r]$ such that $\phi(t_0) = -$. By Corollary \ref{cor:not-essential}, we may assume that $t_0$ is not an essential crossing of $\word{i}$. 
We consider the word 
\[
\wword{i} = (i_1, \dots, i_{t_0-1}, i_{t_0}, i_{t_0}, i_{t_0+1}, \dots, i_{r})
\]
of length $r+1$, and consider the braid variety $X(\wword{i})$. 

\begin{notation}
We will denote the affine coordinates on $X(\wword{i})$ by $z_1, \dots, z_{r+1}$, while the coordinates on $X(\word{i}) = C_{\word{i}}(\phi_{+})$ will be denoted by $z_{1, \phi_{+}}, \dots, z_{r, \phi_{+}}$.
\end{notation}

By results of \cite[Section 2.3]{CGGS1}, we have
\[
\{(z_1, \dots, z_{r+1}) \in X(\wword{i})| z_{t_0+1} \neq 0 \} \cong \C^{\times} \times X(\word{i}).
\]
Let us elaborate on the explicit isomorphism. If $z_{t_0+1} \neq 0$ we have by an $\SL_2$-computation (cf. \cite[(11)]{CGGLSS})
\[
B^{+}_{i_{t_0}}(z_{t_0})B^{+}_{i_{t_0}}(z_{t_0+1}) = B^{+}_{i_{t_0}}(z_{t_0} - z_{t_0+1}^{-1})U_{i_{t_0}}(z_{t_0+1})
\]
and the $X(\word{i})$-part of the isomorphism is obtained by \lq\lq sliding the element $U_{i_{t_0}}(z_{t_0+1})$ to the right\rq\rq. In weave language, we picture this as follows:

\begin{equation}\label{eq:weave-boldfacebeta-to-beta-1}
    \begin{tikzpicture}
         \node at (-0.4, 2.2) {$\wword{i}$};
         \node at (-0.4, -0.2) {$\word{i}$};
         \draw[color=olive] (0,2) to (0,0);
         \draw[color= CadetBlue] (0.5, 2) to (0.5, 0);
         \node at (1, 0.8) {$\dots$};
         \draw[color=blue] (1.5,2) to[out=270, in=135] (2, 1) to (2, 0);
         \draw[color=blue] (2.5, 2) to[out=270, in=45] (2,1);
         \draw[color=magenta] (3.2,2) to (3.2,0);
         \draw[color=red] (3.9, 2) to (3.9, 0);
         \node at (4.5, 0.8) {$\dots$};
         \draw[color=teal] (5.1, 2) to (5.1, 0);
         \draw[dashed, color=YellowOrange] (2,1) to (5.5,1);
         \node at (0, 2.2) {\tiny{$z_1$}};
         \node at (0, -0.2) {\tiny{$z_1$}};
         \node at (0.5, 2.2) {\tiny{$z_2$}};
         \node at (0.5, -0.2) {\tiny{$z_2$}};
         \node at (1, 2.2) {$\cdots$};
         \node at (1.5, 2.2) {\tiny{$z_{t_0}$}};
         \node at (2.5, 2.2) {\tiny{$z_{t_0+1}$}};
         \node at (2.2, -0.2) {\tiny{$z_{t_0} - z_{t_0+1}^{-1}$}};
         \node at (3.2, 2.2) {\tiny{$z_{t_0+2}$}};
         \node at (3.2, -0.2) {\tiny{$z_{t_0+1}'$}};
         \node at (3.9, 2.2) {\tiny{$z_{t_0+3}$}};
         \node at (3.9, -0.2) {\tiny{$z'_{t_0+2}$}};
         \node at (4.5, 2.2) {$\cdots$};
         \node at (5.1, 2.2) {\tiny{$z_{r+1}$}};
         \node at (5.1, -0.2) {\tiny{$z'_{r}$}};
    \end{tikzpicture}
\end{equation}

On the other hand, the $\C^{\times}$-part of the isomorphism is given simply by the coordinate $z_{t_0+1} \neq 0$. In other words, we have the following algebra isomorphism:
\begin{equation}\label{eq:iso-localizations}
\begin{array}{rcl}
\C[X(\word{i})][z^{\pm 1}] & \xrightarrow{\cong} & \C[X(\wword{i})][z_{t_0+1}^{-1}] \\
z_{t, \phi_{+}} & \mapsto & z_{t} \; \text{if} \; t < t_0, \\
z_{t_0, \phi_{+}} & \mapsto & z_{t_0} - z_{t_0+1}^{-1}, \\
z_{t, \phi_{+}} & \mapsto & z_{t+1} \; \text{if} \; t > t_0, \\
z & \mapsto & z_{t_0+1}.
\end{array}
\end{equation}

\begin{lemma}\label{lem:iso-tori}
    The elements $z_{t_0} - z_{t_0+1}^{-1}, z_{t_0+1}', \dots, z_r' \in \C(X(\wword{i}))$ are homogeneous with respect to the $\dil{\wword{i}}$-action. Moreover, the map
    \[
    \begin{array}{rcl}
    \dil{\wword{i}} & \to & (\C^{\times})^{r} \\
    \xi & \mapsto & (\xi^{\wt(z_1)}, \xi^{\wt(z_2)}, \dots, \xi^{\wt(z_{t_0-1})}, \xi^{\wt(z_{t_0} - z_{t_0+1}^{-1})}, \xi^{\wt(z'_{t_0+1})}, \dots, \xi^{\wt(z_r')}),
    \end{array}
    \]
    induces an isomorphism $\dil{\wword{i}} \to \dil{\word{i}}$.
\end{lemma}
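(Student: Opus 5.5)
The plan is to reduce everything to Lemma~\ref{lem:3valent-dilation}, applied to the trivalent arrow $\wword{i} = \word{i}_1(i_{t_0}i_{t_0})\word{i}_2 \to \word{i}_1 (i_{t_0})\word{i}_2 = \word{i}$. The inclusion \eqref{eqa:3-valent} is realized algebraically by the isomorphism \eqref{eq:iso-localizations}, which identifies $\C[X(\word{i})][z^{\pm1}]$ with $\C[X(\wword{i})][z_{t_0+1}^{-1}]$. The images of $z_{t,\phi_+}$ under this map are exactly the listed functions $z_1,\dots,z_{t_0-1}, z_{t_0}-z_{t_0+1}^{-1}, z'_{t_0+1},\dots,z'_r$. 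The proof of Lemma~\ref{lem:3valent-dilation} gives the required homogeneity. Since $z_{t_0+1}$ is a cluster variable in some seed (the one whose top trivalent vertex is this move), the localization $\C[X(\wword{i})][z_{t_0+1}^{-1}]$ carries the cluster structure obtained by freezing $z_{t_0+1}$. Its cluster monomials are cluster monomials of $\C[X(\wword{i})]$ times powers of $z_{t_0+1}$, hence $\dil{\wword{i}}$-homogeneous. The map \eqref{eq:iso-localizations} is a cluster quasi-isomorphism \cite[Lemma 4.19]{CGSS}. Each $z_{t,\phi_+}$ is zero or a cluster monomial of $\C[X(\word{i})]$ by Theorem~\ref{thm:cluster-structure}(2), so its image is a cluster monomial (up to the frozen $z$ factor and scalar) and therefore homogeneous. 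This settles the first claim.

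For the second claim I would define $\Psi:\dil{\wword{i}}\to(\C^\times)^r$ by the displayed formula and follow the proof of Lemma~\ref{lem:easy-2m-valent}. First, $\Psi$ lands in $(\C^\times)^{\noness{\word{i}}}$, because an essential crossing $t$ of $\word{i}$ has $z_{t,\phi_+}=0$, so its image is $0$ and gets weight $0$ by convention. Next, every cluster variable $x$ of $\C[X(\word{i})]$ maps to a homogeneous element $\varphi^*(x)$. Hence $\Psi(\xi)$ rescales $x$ by $\xi^{\wt(\varphi^*(x))}$, and $\Psi(\xi)\in\dil{\word{i}}$. The map $\Psi$ is the morphism of Lemma~\ref{lem:3valent-dilation} made explicit.

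The main obstacle is bijectivity, since Lemma~\ref{lem:3valent-dilation} only gives a homomorphism. I would argue as follows. Under $\varphi^*$ the dilation group of the localized structure on $\C[X(\wword{i})][z_{t_0+1}^{-1}]$ is identified with $\dil{\word{i}}\times\C^\times$, the extra factor acting on the frozen variable $z$. Freezing a mutable variable $z_{t_0+1}$ in a really full rank seed adds the torus factor $\C^\times$ and does not shrink the dilation group, because homogeneity of the cluster variables of $\C[X(\wword{i})]$ that remain in the localization is implied by homogeneity in a single seed. So the dilation group of the localization is $\dil{\wword{i}}$. Comparing ranks gives $d_{\wword{i}} = d_{\word{i}}+1-1$? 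I need to be careful here: the localization has $d_{\wword{i}}+1$ frozen variables, and these correspond to the $d_{\word{i}}$ frozen variables plus $z$. This forces $d_{\word{i}}=d_{\wword{i}}$, and the composite $\dil{\wword{i}}\to\dil{\word{i}}\times\C^\times\to\dil{\word{i}}$ is $\Psi$. Its kernel consists of dilations acting trivially on all $z_{t,\phi_+}$. These elements act on the generators of $\C[X(\wword{i})][z_{t_0+1}^{-1}]$ only through $z_{t_0+1}$. Since $z_{t_0}=z_{t_0,\phi_+}+z^{-1}$ must be homogeneous, such an element also acts trivially on $z$, so the kernel is trivial. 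Surjectivity then follows from equality of ranks together with injectivity on tori. Alternatively, I would build the inverse map $\dil{\word{i}}\times\C^\times\to\dil{\wword{i}}$ directly and restrict it to the subtorus cut out by homogeneity of $z_{t_0}$.
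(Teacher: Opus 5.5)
\textbf{Gap: mutability of $z_{t_0+1}$.} Your rank count rests on an assertion you never prove. You write ``freezing a mutable variable $z_{t_0+1}$'', but nothing you say shows that $z_{t_0+1}$ is mutable in $\C[X(\wword{i})]$. By Theorem~\ref{thm:cluster-structure}(1) it is mutable if and only if the arrow $\wword{i}\to\word{i}$ is not frozen. That happens exactly when $t_0$ is not an essential crossing of $\word{i}$, which is the standing assumption made via Corollary~\ref{cor:not-essential}. Your argument never invokes it, and it cannot be dropped. If $t_0$ were essential, $z_{t_0+1}$ would already be frozen, the localization would have only $d_{\wword{i}}$ frozen variables, your count would give $d_{\wword{i}}=d_{\word{i}}+1$, and the map could not be an isomorphism.

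The same hypothesis is used silently in your injectivity step. To deduce from homogeneity of $z_{t_0}=z_{t_0,\phi_+}+z^{-1}$ that a kernel element acts trivially on $z$, you need $z_{t_0,\phi_+}$ and $z^{-1}$ to be linearly independent, i.e.\ $z_{t_0,\phi_+}\neq 0$. If $z_{t_0,\phi_+}=0$, then $z_{t_0}=z^{-1}$ and nothing is forced.

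Separately, the sentence ``So the dilation group of the localization is $\dil{\wword{i}}$'' is wrong and contradicts the sentence before it. The localized dilation group has rank $d_{\wword{i}}+1$ and contains $\dil{\wword{i}}$ as a codimension-one subtorus. All you actually use is the frozen count $d_{\wword{i}}+1=d_{\word{i}}+1$.

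\textbf{Comparison with the paper.} Once non-essentiality is invoked at these two points, your proof is correct. Your injectivity step is the paper's argument in algebraic form. The paper says a kernel element fixes every flag except possibly $\flag{t_0}$, and then also fixes $\flag{t_0}$ because $\wt(z_{t_0}-z_{t_0+1}^{-1})=\wt(z_{t_0})$.

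Your rank equality takes a different route: you compare frozen counts across the quasi-isomorphism \eqref{eq:iso-localizations}. The paper instead appends a weave for $\word{i}$ below the trivalent vertex $\wword{i}\to\word{i}$. A seed for $\wword{i}$ is then a seed for $\word{i}$ plus one vertex, and only that vertex's non-frozenness must be checked. The paper's count is shorter and needs neither the freezing-under-localization result nor invariance of frozen counts under quasi-isomorphism. Yours gives a more conceptual picture: it identifies $\dil{\wword{i}}$ with the graph of the character $\xi\mapsto\xi^{-\wt(z_{t_0,\phi_+})}$ of $\dil{\word{i}}$ inside $\dil{\word{i}}\times\C^\times$. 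This is exactly how the paper lets $\dil{\word{i}}$ act on the $\C^\times$ factor right after the lemma.
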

\begin{proof}
The first part of the statement follows from Lemma \ref{lem:3valent-dilation}. We only need to show that the map $\dil{\wword{i}} \to \dil{\word{i}}$ is an isomorphism.

First, we claim that $\dil{\word{i}}$ and $\dil{\wword{i}}$ are tori of the same rank, which is equivalent to saying that the cluster structures on $X(\word{i})$ and $X(\wword{i})$ have the same number of frozen variables. Note that, appending a weave for $\word{i}$ at the bottom of the weave \eqref{eq:weave-boldfacebeta-to-beta-1} we obtain a weave for $\wword{i}$. It follows that one obtains the ice quiver for $X(\wword{i})$ from the ice quiver in $X(\word{i})$ by adding a new vertex corresponding to the trivalent vertex in \eqref{eq:weave-boldfacebeta-to-beta-1}. We need to verify that this vertex is not frozen. Since $t_0$ is non-essential, the type ($\mathrm{A}_2$)-arrow $\wword{i} \to \word{i}$ is not frozen, so it follows from  Theorem~\ref{thm:cluster-structure} that the corresponding vertex in the quiver is not frozen, as needed.

It remains to show that the map is injective. Assume that $\xi \in \dil{\wword{i}}$ maps to the identity. In terms of flags, the map $X(\wword{i}) \dashrightarrow X(\word{i})$ is given simply by forgetting the flag $\flag{t_0}$, so the action of $\xi$ on an element $(\flag{0}, \dots, \flag{r+1}) \in X(\wword{i})$ fixes every flag except for, perhaps, the flag $\flag{t_0}$. But for any such $\xi$ we also have $\xi^{\wt(z_{t_0} - z_{t_0+1}^{-1})} = \xi^{\wt(z_{t_0})} = 1$, so the element must also fix the flag $\flag{t_0}$. Since $\dil{\wword{i}}$ acts faithfully on $X(\wword{i})$, the result follows. 
\end{proof}

We extend the action of $\dil{\word{i}}$ on $X(\word{i})$ to $\C^{\times} \times X(\word{i})$ by making $\xi \in \dil{\word{i}}$ act on $\C^{\times}$ by $\xi^{-\wt(z_{t_0, \phi_{+}})}$. Note that this makes the isomorphism \eqref{eq:index-set-charts} $\dil{\word{i}}$-equivariant, where we have used the isomorphism from Lemma \ref{lem:iso-tori} to make $\dil{\word{i}}$ act on the right-hand side. 

Thanks to Lemma \ref{lem:iso-tori}, the pullback of the functions $z_1, \dots, z_{r+1}$ to $X(\word{i})$ are $\dil{\word{i}}$-homogeneous. Computing, we have:
\[
z_t(z_{1, \phi_{+}}, \dots, z_{r, \phi_{+}}, z) = \begin{cases} z_{i, \phi_{+}} & \text{if} \; t < t_0 \\
z_{t_0, \phi_{+}} + z^{-1} & \text{if} \; t = t_0, \\ 
z & \text{if} \; t = t_0+1, \\
\widetilde{z}_{t-1, \phi_{+}} & \text{if} \; t > t_0+1\end{cases} 
\]
where $\widetilde{z}_{t-1, \phi_{+}}$ is given, up to signs, by sliding the element $U_{t_0}(-z^{-1})$ to the right, cf. Remark \ref{rmk:sliding-inverse}. Restricting to the $\dil{\word{i}}$-stable locally closed set $z_{t_0, \phi_{+}} = -z^{-1}$, we obtain that the elements $z_{t_0, \phi_{+}}^{-1}, z'_{t_0+1, \phi_{+}}, \dots, z'_{r, \phi_{+}}$ from \eqref{eq:transition-function} are homogeneous, as desired. Thus, we have the following result.

\begin{proposition}\label{prop:grading-one-minus}
    Assume that $\phi(t_0) = -$ but $\phi(t) = +$ for $t \neq t_0$. The isomorphism
    \[
    \C[C_{\word{i}}(\phi)][z_{t_0, \phi}^{-1}] \to \C[C_{\word{i}}(\phi_{+})][z_{t_0, \phi_{+}}^{-1}]
    \]
    endows $\C[C_{\word{i}}(\phi)][z_{t, \phi}^{-1}]$ with a $\charlat(\dil{\word{i}})$-grading so that each coordinate function $z_{1, \phi}, \dots, z_{r, \phi}$ is homogeneous. Moreover, $\C[C_{\word{i}}(\phi)]$ is a graded subalgebra of $\C[C_{\word{i}}(\phi)][z_{t, \phi}^{-1}]$.
\end{proposition}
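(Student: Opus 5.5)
The plan is to transport the $\charlat(\dil{\word{i}})$-grading of $\C[X(\word{i})]$ through the transition isomorphism and then check two things: that each coordinate $z_{t,\phi}$ becomes homogeneous, and that the unlocalized ring is a graded subalgebra.

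\emph{Step 1: transport the grading.} Since $\phi$ differs from $\phi_+$ only at $t_0$, the transition \eqref{eq:transition-function} gives an isomorphism
\[
\C[C_{\word{i}}(\phi)][z_{t_0,\phi}^{-1}] \cong \C[C_{\word{i}}(\phi_+)][z_{t_0,\phi_+}^{-1}].
\]
Its pullback has the following form.
\begin{itemize}
\item For $t<t_0$ it sends $z_{t,\phi}\mapsto z_{t,\phi_+}$.
\item It sends $z_{t_0,\phi}\mapsto z_{t_0,\phi_+}^{-1}$.
\item For $t>t_0$ it sends $z_{t,\phi}\mapsto z'_{t,\phi_+}$, the function obtained by sliding $U_{i_{t_0}}(z_{t_0,\phi_+})$ to the right.
\end{itemize}
The right-hand ring carries the $\dil{\word{i}}$-grading of $\C[X(\word{i})]$, localized at $z_{t_0,\phi_+}$. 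This localization is still graded, because $z_{t_0,\phi_+}$ is a nonzero cluster monomial and hence homogeneous; here we use that $t_0$ is non-essential, by Corollary \ref{cor:not-essential}. We then define the grading on the left-hand ring by pulling back along the isomorphism.

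\emph{Step 2: homogeneity of the coordinates.} The functions $z_{t,\phi_+}$ and $z_{t_0,\phi_+}^{-1}$ are clearly homogeneous. The main point is that each $z'_{t,\phi_+}$ is homogeneous. For this I would use the computation just carried out through the auxiliary word $\wword{i}$.
\begin{itemize}
\item Lemma \ref{lem:iso-tori} identifies $\dil{\wword{i}}\cong\dil{\word{i}}$, compatibly with the equivariant isomorphism $X(\wword{i})\supset\{z_{t_0+1}\neq0\}\cong \C^\times\times X(\word{i})$ from \eqref{eq:iso-localizations}.
\item Under this isomorphism the affine coordinates $z_1,\dots,z_{r+1}$ of $X(\wword{i})$ are homogeneous, being cluster monomials or zero. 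Their expressions in the coordinates $(z_{t,\phi_+},z)$ are the sliding of $U(-z^{-1})$, up to signs by Remark \ref{rmk:sliding-inverse}.
\item Restricting to the $\dil{\word{i}}$-stable locus $z_{t_0,\phi_+}=-z^{-1}$ identifies these slid functions with $z'_{t,\phi_+}$.
\end{itemize}
A regular function on a stable subvariety that is the restriction of a homogeneous function is homogeneous. So all $z_{t,\phi}$ are homogeneous, and their weights are the weights of these restrictions.

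There is a subtlety in this restriction: the restricted locus must be nonempty, and the restricted functions must really equal $z'_{t,\phi_+}$ as functions on the open set $\{z_{t_0,\phi_+}\neq 0\}$ of $X(\word{i})$. I expect this matching of the two slidings, one forward and one inverse, to be the main point to check carefully. The stable locus $\{z_{t_0,\phi_+}=-z^{-1}\}$ projects isomorphically onto $\{z_{t_0,\phi_+}\neq0\}\subset X(\word{i})$, which gives what is needed.

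\emph{Step 3: graded subalgebra.} The ring $\C[C_{\word{i}}(\phi)]$ is generated by the coordinates $z_{1,\phi},\dots,z_{r,\phi}$. Since $C_{\word{i}}(\phi)$ is a closed subvariety of the affine space $C(\phi)$, it is the subalgebra generated by homogeneous elements, and hence a graded subalgebra. It embeds into its localization because $C_{\word{i}}(\phi)$ is irreducible by Proposition \ref{prop:irreducible}. Moreover $z_{t_0,\phi}$ is not identically zero on it, since the chart meets $X(\word{i})$, which is the locus $z_{t_0,\phi}\neq 0$.
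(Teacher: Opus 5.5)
Your proposal is correct and follows essentially the same route as the paper. You transport the grading through the transition isomorphism, get homogeneity of the slid coordinates $z'_{t,\phi_+}$ from the auxiliary word $\wword{i}$, Lemma \ref{lem:iso-tori} and restriction to the stable locus $z_{t_0,\phi_+}=-z^{-1}$, and obtain the embedding into the localization from irreducibility (Proposition \ref{prop:irreducible}); your Step 3 also spells out what the paper leaves implicit, namely that $\C[C_{\word{i}}(\phi)]$ is a graded subalgebra because it is generated by the homogeneous coordinates $z_{t,\phi}$.
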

\begin{proof}
    Only the fact that $\C[C_{\word{i}}(\phi)]$ is a subalgebra of $\C[C_{\word{i}}(\phi)][z_{t_0, \phi}^{-1}]$ does not follow from the discussion above, but this is a consequence of the fact that $C_{\word{i}}(\phi)$ is an irreducible variety and so $\C[C_{\word{i}}(\phi)]$ is an integral domain, cf. Proposition \ref{prop:irreducible}.
\end{proof}

Let us now deal with the case when $\phi$ takes the ``$-$'' value at an arbitrary number of elements in $[r]$. By Corollary \ref{cor:not-essential} we may and will assume none of these elements is an essential crossing of $\word{i}$. We can then consider the word $\wword{i}$ obtained by doubling $i_t$ for every $t \in [r]$ with $\phi(t) = -$, and the following weave:

\begin{center}
    \begin{tikzpicture}
        \draw[color=OliveGreen] (0,0) to (0,5);
        \node at (0.5, 4.5) {$\dots$}; \node at (0.5, 0.5) {$\dots$};
        \draw[color=blue](1, 5) to (1,3) to[out=270, in=135] (1.5, 2) to (1.5,0); \draw[color=blue](2,5) to (2,3) to[out=270, in=45] (1.5,2);
        \draw[dashed, color=YellowOrange] (1.5, 2) to (7.5, 2);
        \node at (2.5, 4.5) {$\dots$}; \node at (2.5, 0.5) {$\dots$};
         \draw[color=RedViolet](3, 5) to (3,4) to[out=270, in=135] (3.5, 3) to (3.5,0); \draw[color=RedViolet](4,5) to (4,4) to[out=270, in=45] (3.5, 3);
         \draw[dashed, color=YellowOrange] (3.5, 3) to (7.5, 3);
          \node at (4.5, 4.5) {$\dots$}; \node at (4.5, 0.5) {$\dots$};
          \draw[color=BlueGreen](5, 5) to[out=270, in=135] (5.5,4) to (5.5,0); \draw[color=BlueGreen](6,5) to[out=270, in=45] (5.5,4);
          \draw[dashed,color=YellowOrange] (5.5,4) to (7.5, 4);
          \node at (6.5, 4.5) {$\dots$}; \node at (6.5, 0.5) {$\dots$};
          \draw[color=ForestGreen] (7,0) to (7,5);

          \node at (-0.4, 4.8) {$\wword{i}$};
          \node at (-0.4, 0.2) {$\word{i}$};         
    \end{tikzpicture}
\end{center}

Note that this weave identifies $(\C^{\times})^{\#\{t \mid \phi(t) = -\}} \times X(\word{i})$ with an open subset of $X(\wword{i})$. Moreover, since $\phi$ takes the value ``$+$'' at essential crossings, similarly to Lemma \ref{lem:iso-tori} we have an isomorphism of tori $\dil{\wword{i}} \cong \dil{\word{i}}$. Now it follows similarly by recursively applying Proposition \ref{prop:grading-one-minus} that $\C[C_{\word{i}}(\phi)][z_{t, \phi}^{-1} \mid \phi(t) = -]$ admits a grading by the character lattice $\charlat(\dil{\word{i}})$ in such a way that $z_{t, \phi}$ is homogeneous for every $t = 1, \dots, r$, compare to Example \ref{ex:running-transitions}. Thus, $\C[C_{\word{i}}(\phi)]$ inherits a grading by $\charlat(\dil{\word{i}})$. Summarizing, we have the following result.

\begin{theorem}\label{thm:action-extends}
    Let $\word{i}$ be a braid word. Then, the action of $\dil{\word{i}}$ on $X(\word{i})$ extends to an algebraic action on $\brick(\word{i})$. Moreover,
    \begin{enumerate}
        \item Each affine open set $C_{\word{i}}(\phi)$ is stable under the action of $\dil{\word{i}}$.
        \item For any subword $\word{j}\subset \word{i}$  with $\dem(\word{j}) = \dem(\word{i})$, the braid variety $X(\word{j}) \subseteq \brick(\word{i})$ is stable under the action of $\dil{\word{i}}$. 
    \end{enumerate}
\end{theorem}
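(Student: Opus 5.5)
The plan is to glue the gradings constructed above into a single algebraic action. By Corollary~\ref{cor:not-essential}, $\brick(\word{i})$ is covered by the affine charts $C_{\word{i}}(\phi)$ with $\phi(t)=+$ at every essential crossing; it suffices to work with these. For each such $\phi$, the discussion preceding the theorem (iterating Proposition~\ref{prop:grading-one-minus} via the doubled word $\wword{i}$ and the isomorphism $\dil{\wword{i}}\cong\dil{\word{i}}$ of Lemma~\ref{lem:iso-tori}) gives a $\charlat(\dil{\word{i}})$-grading on $\C[C_{\word{i}}(\phi)]$, hence an algebraic $\dil{\word{i}}$-action on $C_{\word{i}}(\phi)$. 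The grading is characterized by the requirement that the transition isomorphism
\[
\C[C_{\word{i}}(\phi)][z_{t,\phi}^{-1}\mid \phi(t)=-]\xrightarrow{\cong}\C[X(\word{i})]_{loc}
\]
be graded. Equivalently, the open embedding $C_{\word{i}}(\phi)\cap X(\word{i})\hookrightarrow C_{\word{i}}(\phi)$ is $\dil{\word{i}}$-equivariant, where the intersection carries the restriction of the cluster dilation action.

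The gluing step is to check that on each overlap $C_{\word{i}}(\phi_1)\cap C_{\word{i}}(\phi_2)$ the two actions agree. I would use irreducibility here (Proposition~\ref{prop:irreducible}): $X(\word{i})\cap C_{\word{i}}(\phi_1)\cap C_{\word{i}}(\phi_2)$ is a nonempty open dense subset of the overlap, and it is stable under both actions, since both restrict there to the original action on $X(\word{i})$. For $g\in\dil{\word{i}}$, the two morphisms $g\cdot_1,g\cdot_2$ of the overlap are defined on the overlap (each chart is stable under its own action, so the overlap, being the locus in $C_{\word{i}}(\phi_1)$ where the relevant coordinate functions are invertible, is stable) and agree on a dense open subset, so they coincide because $\brick(\word{i})$ is separated. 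Hence the actions glue to an action on $\brick(\word{i})$. Algebraicity, meaning that $\dil{\word{i}}\times\brick(\word{i})\to\brick(\word{i})$ is a morphism, is local on the charts, where it holds since the action comes from a grading. Item (1) is built into the construction.

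The main subtle point I expect is the stability of the overlap. On $C_{\word{i}}(\phi_1)$, the overlap is cut out by nonvanishing of certain coordinate functions: inverses such as $z_{t,\phi_1}$ and products of transition polynomials, which are homogeneous by Proposition~\ref{prop:grading-one-minus} and its iteration. Nonvanishing of homogeneous functions is preserved by the torus. I would verify that these defining functions are homogeneous by factoring the transition into single-sign changes, exactly as in Example~\ref{ex:running-transitions}.

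For (2), Remark~\ref{rmk:equations-braid} identifies $X(\word{j})$ with the closed subset $\{z_{t,\phi_{\word{j}}}=0,\ t\notin\word{j}\}$ of $C_{\word{i}}(\phi_{\word{j}})$. Alternatively, it is the stratum $\brick(\word{i})^{\word{j}}$ intersected with that chart, where $\phi_{\word{j}}$ is $+$ on essential crossings because they lie in every subword of full Demazure product. The coordinates are homogeneous, so their common zero locus is $\dil{\word{i}}$-stable. Since the chart is stable, $X(\word{j})$ is stable as well.
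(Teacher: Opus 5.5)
Your proposal is correct and follows the paper's proof. The $\charlat(\dil{\word{i}})$-gradings on the charts $C_{\word{i}}(\phi)$ (with $\phi=+$ on essential crossings) are glued into a global action, and (2) holds because $X(\word{j})$ is cut out in $C_{\word{i}}(\phi_{\word{j}})$ by the vanishing of homogeneous coordinates; your density-plus-separatedness argument just spells out the compatibility on overlaps that the paper asserts in one line.

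One simplification: apply that same argument to the two maps $\dil{\word{i}}\times O\to\brick(\word{i})$, where $O=C_{\word{i}}(\phi_1)\cap C_{\word{i}}(\phi_2)$. It already shows that their common image lies in $O$, so stability of the overlap is automatic. You therefore do not need the separate homogeneity check of its defining equations, which is delicate anyway, since an intermediate chart in a chain of single sign changes need not contain the whole overlap.
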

\begin{proof}
We have compatible gradings by the character lattice $\charlat(\dil{\word{i}})$ on each coordinate algebra $C[C_{\word{i}}(\phi)]$, so the action of $\dil{\word{i}}$ on $\brick(\word{i})$ is obtained by gluing together actions on each affine set $C_{\word{i}}(\phi)$, and (1) is clear. For (2), recall from Remark \ref{rmk:equations-braid} that we can obtain the braid variety $X(\word{j})$ as a closed subset inside a single affine piece $C_{\word{i}}(\phi)$, given by the equations that some coordinates $z_{t, \phi}$ are zero. But by construction these coordinates are homogeneous with respect to the $\charlat(\dil{\word{i}})$-grading, so $X(\word{j})$ is indeed stable under the action of $\dil{\word{i}}$. 
\end{proof}

\begin{corollary}
\label{cor:bricks_subwrods_stable}
For any subword $\word{j}\subset\word{i}$ with $\dem(\word{j}) = \dem(\word{i})$, the brick variety $\brick(\word{j}) \subseteq \brick(\word{i})$ is stable under the action of $\dil{\word{i}}$.
\end{corollary}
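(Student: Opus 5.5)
The plan is to deduce the corollary from Theorem \ref{thm:action-extends}(2) together with Escobar's stratification, Theorem \ref{thm:escobar_stratification}. Recall that, under the identification of $\brick(\word{j})$ with a closed subvariety of $\brick(\word{i})$, we have
\[
\brick(\word{j}) \cong \overline{\brick(\word{i})^{\word{j}}}, \qquad \brick(\word{i})^{\word{j}} \cong X(\word{j}),
\]
where the closure is taken inside $\brick(\word{i})$. So it suffices to show that the closure of a $\dil{\word{i}}$-stable subset is again $\dil{\word{i}}$-stable.

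First step: by Theorem \ref{thm:action-extends}(2), the locally closed subvariety $X(\word{j}) = \brick(\word{i})^{\word{j}} \subseteq \brick(\word{i})$ is stable under $\dil{\word{i}}$. Second step: for each $\xi \in \dil{\word{i}}$, the map $\xi\colon \brick(\word{i}) \to \brick(\word{i})$ is an algebraic automorphism, hence a homeomorphism in the Zariski topology, because the action is algebraic by Theorem \ref{thm:action-extends}. Therefore
\[
\xi\bigl(\overline{X(\word{j})}\bigr) = \overline{\xi(X(\word{j}))} = \overline{X(\word{j})}.
\]
Third step: by Theorem \ref{thm:escobar_stratification}, this closure equals $\bigsqcup_{\word{k} \subseteq \word{j}} \brick(\word{i})^{\word{k}}$, which is exactly the image of $\brick(\word{j})$ in $\brick(\word{i})$. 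This concludes the argument.

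The only point needing care is to match this closure with the embedded copy of $\brick(\word{j})$, that is, the flags with $\flag{t-1} = \flag{t}$ for $t \notin \word{j}$. That identification is exactly the remark following Theorem \ref{thm:escobar_stratification}, so there is no genuine obstacle.

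As an alternative that avoids closures, one could cover $\brick(\word{j})$ by the charts $C_{\word{i}}(\phi)$ with $\supp(\phi) \subseteq \word{j}$. On these charts $\brick(\word{j})$ is cut out by the equations $z_{t,\phi} = 0$ for $t \notin \word{j}$. Since these coordinates are $\charlat(\dil{\word{i}})$-homogeneous, the vanishing locus is stable, by the same reasoning as in the proof of Theorem \ref{thm:action-extends}(2).
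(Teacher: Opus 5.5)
Your proof is correct and follows essentially the paper's route: both combine Theorem~\ref{thm:action-extends}(2) with Escobar's stratification (Theorem~\ref{thm:escobar_stratification}). The only difference is minor. The paper writes $\brick(\word{j})$ as the union of the strata $X(\word{j}')$ for $\word{j}'\subseteq\word{j}$ with $\dem(\word{j}')=\dem(\word{i})$, each of which is stable by (2). You instead apply (2) only to the open stratum $X(\word{j})$ and use that each $\xi$ acts by a Zariski homeomorphism, so the closure is again stable.
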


\begin{proof}
This follows from Theorem~\ref{thm:action-extends} since $\brick(\word{j})$ is the union of braid varieties for subwords $\word{j}'$  of $\word{j}$ (and so of $\word{i}$) with $\delta(\word{j}') = \delta(\word{j})$ (and so with $\delta(\word{j}') = \delta(\word{i})$), cf. Theorem~\ref{thm:escobar_stratification}.
\end{proof}

Let us now  explicitly find the set $\brick(\word{i})^{\dil{\word{i}}}$. 

\begin{lemma}
\label{lem:fixed_points}
    There is a bijection between:
    \begin{enumerate}
        \item Points in $\brick(\word{i})$ that are fixed under the $\dil{\word{i}}$-action.
        \item Subwords $\word{j} \subseteq \word{i}$ such that $\pi(\word{j}) = \dem(\word{i})$. 
    \end{enumerate}
\end{lemma}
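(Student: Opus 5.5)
The approach is to use Escobar's stratification (Theorem~\ref{thm:escobar_stratification}) together with Theorem~\ref{thm:action-extends}(2) to reduce the problem to the braid-variety computation of Lemma~\ref{lem:fixed-points-braid}. By Theorem~\ref{thm:escobar_stratification},
\[
\brick(\word{i}) = \bigsqcup_{\dem(\word{j}) = \dem(\word{i})} \brick(\word{i})^{\word{j}}, \qquad \brick(\word{i})^{\word{j}} \cong X(\word{j}).
\]
By Theorem~\ref{thm:action-extends}(2), each stratum $X(\word{j})$ is $\dil{\word{i}}$-stable. Hence
\[
\brick(\word{i})^{\dil{\word{i}}} = \bigsqcup_{\word{j}} X(\word{j})^{\dil{\word{i}}},
\]
and it suffices to show that $X(\word{j})^{\dil{\word{i}}}$ is a single point when $\pi(\word{j}) = \dem(\word{j}) = \dem(\word{i})$, and empty otherwise. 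The bijection then sends $\word{j}$ to the unique fixed point in its stratum.

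\emph{Lower bound.} The torus $T$ acts on $X(\word{i})$ by cluster dilations, so it factors through $\dil{\word{i}}$. Since both actions are algebraic and $X(\word{i})$ is dense in $\brick(\word{i})$, the two actions of $T$ on $\brick(\word{i})$ agree: the diagonal one, and the one through $T \to \dil{\word{i}}$ followed by the extension of Theorem~\ref{thm:action-extends}. Therefore
\[
\brick(\word{i})^{\dil{\word{i}}} \subseteq \brick(\word{i})^{T}.
\]
A $T$-fixed point consists entirely of coordinate flags. On the stratum $X(\word{j})$, the same argument as in Lemma~\ref{lem:fixed-points-braid} shows this forces all affine coordinates of $X(\word{j})$ to vanish. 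That point lies in $X(\word{j})$ exactly when $\pi(\word{j}) = \dem(\word{j})$, which gives at most one fixed point per admissible $\word{j}$.

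\emph{Upper bound.} We must show this candidate point $p_{\word{j}}$ is actually $\dil{\word{i}}$-fixed, not merely $T$-fixed. By Remark~\ref{rmk:equations-braid}, $X(\word{j})$ is the closed subvariety of the chart $C_{\word{i}}(\phi_{\word{j}})$ cut out by $z_{t,\phi_{\word{j}}} = 0$ for $t \notin \word{j}$. In that chart $p_{\word{j}}$ is the origin: all chart coordinates $z_{t,\phi_{\word{j}}}$ vanish there. By the construction of the grading preceding Theorem~\ref{thm:action-extends}, each chart coordinate $z_{t,\phi}$ is $\charlat(\dil{\word{i}})$-homogeneous. So $\dil{\word{i}}$ acts on these coordinates by scaling, and scaling preserves the origin. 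Hence $p_{\word{j}}$ is fixed.

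The main obstacle is justifying that the $T$-action obtained by restricting the extended $\dil{\word{i}}$-action coincides with the geometric diagonal $T$-action on all of $\brick(\word{i})$. This is what lets us identify $T$-fixed points on the boundary strata with coordinate flags. I would argue it by density of $X(\word{i})$ and separatedness: two morphisms $T \times \brick(\word{i}) \to \brick(\word{i})$ that agree on the dense open set $T \times X(\word{i})$ must agree everywhere. Alternatively, one can check directly that in the charts the sliding formulas make $T$ act on each $z_{t,\phi}$ by a character.
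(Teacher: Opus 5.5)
Your proof is correct and follows essentially the same route as the paper: the origin of the chart $C_{\word{i}}(\phi_{\word{j}})$ is $\dil{\word{i}}$-fixed because the chart coordinates are homogeneous. Conversely, every $\dil{\word{i}}$-fixed point is $T$-fixed, since by density of $X(\word{i})$ the torus $T$ acts diagonally on all of $\brick(\word{i})$ through $\dil{\word{i}}$, and hence every fixed point is a tuple of coordinate flags. Organizing the count via Escobar's stratification and the $\dil{\word{i}}$-stability of the strata $X(\word{j})$ is only a repackaging that makes injectivity and the matching of index sets explicit; for the latter, note that $\pi(\word{j}) = \dem(\word{i})$ already forces $\dem(\word{j}) = \dem(\word{i})$, because $\pi(\word{j}) \le \dem(\word{j}) \le \dem(\word{i})$.
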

\begin{proof}
Let $\word{j} \subseteq \word{i}$ be as in (2), and let $\phi_{\word{j}} \in 2^{\word{i}}$ be as in Remark \ref{rmk:equations-braid}. The point in $C(\phi_{\word{j}})$ given by $z_{t, \phi_{\word{j}}} = 0$ for every $t$ belongs to $C_{\word{i}}(\phi_{\word{j}})$ thanks to our condition on $\word{j}$, and since each $z_{t, \phi_{\word{j}}}$ is homogeneous under the $\dil{\word{i}}$-action it is fixed under the action of $\dil{\word{i}}$. Note that, in terms of flags, this is the element all whose components are coordinate flags, and $\flag{t-1} = \flag{t}$ unless $t \in \word{j}$. 

It remains to verify that every element of $\brick(\word{i})^{\dil{\word{i}}}$ is of the form specified in the previous paragraph. For this, we use the action of the diagonal torus $T = \borel_{+} \cap \borel_{-}$. Note that the diagonal action of $T$ on $X(\word{i})$ is in fact the restriction of the diagonal action on $\brick(\word{i})$ which is, in turn, the restriction of the diagonal action on $\BS(\word{i})$. Moreover, recall that the diagonal action of $T$ on $X(\word{i})$ is by cluster dilations, so we have a map $a: T \to \dil{\word{i}}$, satisfying $a(t)(\flag{0}, \flag{1}, \dots, \flag{r}) = (t\flag{0}, t\flag{1}, \dots, t\flag{r})$ for every  $t \in T, \, (\flag{0}, \dots, \flag{r}) \in X(\word{i})$. By continuity, $a(t)(\flag{0}, \flag{1}, \dots, \flag{r}) = (t\flag{0}, t\flag{1}, \dots, t\flag{r})$ for every $t \in T$ and every element $(\flag{0}, \dots, \flag{r}) \in \brick(\word{i})$. Thus, a point fixed under the action of $\dil{\word{i}}$ must also be fixed under the diagonal action of $T$, so all its components are coordinate flags, cf. Lemma \ref{lem:fixed-points-braid}. 
\end{proof}

\begin{corollary}
\label{cor:invariant_divisors}
The set  
\begin{equation}
\label{eq:invariant_divisors}
\{ \overline{\brick(\word{i})^{\word{j}}} \cong \brick(\word{j}) \, | \, \word{j} \subset \word{i}, \ell(\word{j}) = \ell(\word{i}) - 1, \dem(\word{j}) = \dem(\word{i})\}
\end{equation}
provides an explicit set of $\dil{\word{i}}$-invariant divisors whose classes generate $\Pic(\brick(\word{i})) \cong \Cl(\brick(\word{i}))$.
\end{corollary}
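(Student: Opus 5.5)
The invariance part is immediate from Corollary~\ref{cor:bricks_subwrods_stable}. Each $\brick(\word{j})$ with $\dem(\word{j})=\dem(\word{i})$ is $\dil{\word{i}}$-stable, and by Theorem~\ref{thm:escobar_stratification} it equals the closure $\overline{\brick(\word{i})^{\word{j}}}$. When $\ell(\word{j})=r-1$, this closure is irreducible of dimension $(r-1)-\ell(\dem(\word{i}))$, hence a prime divisor of $\brick(\word{i})$. Since $\brick(\word{i})$ is smooth (Escobar), Weil and Cartier divisors agree, so $\Pic(\brick(\word{i}))\cong\Cl(\brick(\word{i}))$. It therefore remains to show that the classes of these divisors generate $\Cl$.

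For generation I would use the standard excision sequence. The complement $\brick(\word{i})\setminus X(\word{i})$ is the union of the strata $\brick(\word{i})^{\word{k}}$ with $\word{k}\subsetneq\word{i}$. By the closure relation in Theorem~\ref{thm:escobar_stratification}, every such stratum lies in the closure of some codimension-one stratum $\brick(\word{i})^{\word{j}}$ with $\ell(\word{j})=r-1$. Such a $\word{j}$ exists: take any $\word{j}\supseteq\word{k}$ obtained by deleting one letter of $\word{i}$ outside $\word{k}$; its Demazure product is sandwiched between $\dem(\word{k})=\dem(\word{i})$ and $\dem(\word{i})$. Hence the boundary divisor is exactly the union of the divisors in \eqref{eq:invariant_divisors}. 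The sequence
\[
\bigoplus_{\word{j}} \Z[\brick(\word{j})] \to \Cl(\brick(\word{i})) \to \Cl(X(\word{i})) \to 0
\]
then reduces the claim to $\Cl(X(\word{i}))=0$.

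The main obstacle is proving $\Cl(X(\word{i}))=0$, i.e.\ that $\C[X(\word{i})]$ is a UFD. My first approach is the cluster structure. $X(\word{i})$ is covered up to codimension $2$ by cluster tori: the structure is locally acyclic, so the union of cluster tori has complement of codimension at least $2$ (the deep locus has codimension $\ge 2$, Muller). A torus has trivial class group, so restricting to one cluster torus $\mathbb{T}\subset X(\word{i})$, the group $\Cl(X(\word{i}))$ is generated by components of $X(\word{i})\setminus\mathbb{T}$. These are the vanishing loci of cluster variables of that seed. Cluster variables in a full-rank locally acyclic cluster algebra are irreducible elements, but each would also need to be prime, and in general that need not hold. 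So I would rather use factoriality of $\C[X(\word{i})]$ directly. Concretely, I would write $X(\word{i})$ as an iterated principal open subset of an affine space, using the weave isomorphisms \eqref{eqa:3-valent}: each $(\mathrm{A}_2)$ step identifies $X(\word{j})\times\C^\times$ with the locus $z_{t_0}\neq 0$ in $X(\word{i})$. I would then argue inductively that the only divisor supported off the open piece is $\{z_{t_0}=0\}$, which is a union of closures of strata of the same kind.

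If this inductive route stalls, the fallback is to drop the open set altogether. Use that $\brick(\word{i})$ is an iterated $\PP^1$-bundle fiber, or compare with the known description of $\Pic(\BS(\word{i}))\cong\Z^r$ generated by the boundary divisors $\{\flag{t-1}=\flag{t}\}$. Then restrict to the fiber $\brick(\word{i})$ and use that these restrict to the divisors in \eqref{eq:invariant_divisors}, with those for essential crossings restricting to empty sets. The surjectivity of this restriction map on Picard groups would be the point to check, for instance via the Białynicki-Birula decomposition for a generic one-parameter subgroup of $T$ with the finitely many fixed points of Lemma~\ref{lem:fixed_points}.
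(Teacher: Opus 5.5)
The invariance part and $\Pic\cong\Cl$ are fine, and so is your excision step: the boundary $\brick(\word{i})\setminus X(\word{i})$ is exactly the union of the divisors in \eqref{eq:invariant_divisors}. But that exact sequence makes generation \emph{equivalent} to $\Cl(X(\word{i}))=0$. So all the content of the corollary sits in the step you call the main obstacle, and neither route proves it.

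\textbf{The inductive route.} Apply excision to $\{z_{t_0}\neq 0\}\cong X(\word{j})\times\C^\times$ and use induction to get $\Cl(X(\word{j})\times\C^\times)=0$. This only shows that $\Cl(X(\word{i}))$ is generated by the irreducible components of $Z=\{z_{t_0}=0\}$. You still need $Z$ to be irreducible (or empty) with $z_{t_0}$ vanishing to first order along it, so that $[Z]=\mathrm{div}(z_{t_0})=0$. Without this, extra generators or torsion could survive. Saying $Z$ is ``a union of closures of strata of the same kind'' does not supply it, since there are no brick strata inside $X(\word{i})$. The step can be filled in:
\begin{itemize}
\item $Z$ is the locus $\flag{t_0-2}=\flag{t_0}$.
\item It is empty when the trivalent vertex is frozen, and otherwise isomorphic to $\C\times X(\word{i}_1\word{i}_2)$.
\item Reducedness can be deduced from $\borel_+$-equivariance of the all-$+$ Bott--Samelson chart, which over the Schubert cell of $\dem(\word{i})$ is a product of that cell with $X(\word{i})$.
\end{itemize}
As written, however, this is missing. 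Also, $X(\word{i})$ is not an iterated principal open subset of affine space. Rather, $X(\word{j})\times\C^\times$ is principal open in $X(\word{i})$, and you need Tits' word property to produce an adjacent $ii$ at each step.

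\textbf{The fallback route.} This is essentially the paper's argument. The boundary divisors form a basis of $\Pic(\BS(\word{i}))$, and they restrict to the divisors in \eqref{eq:invariant_divisors} or to zero. Everything then hinges on surjectivity of $\Pic(\BS(\word{i}))\to\Pic(\brick(\word{i}))$, which you leave as ``the point to check.'' Bia\l{}ynicki-Birula decompositions do not give it. They yield $\Pic=H^2$ for both varieties, via vanishing of $H^{>0}(\mathcal{O})$ and the exponential sequence, and the paper uses exactly that. They do not yield surjectivity of restriction: a $\C^\times$-stable smooth conic in $\PP^2$ has isolated fixed points, yet $H^2(\PP^2,\Z)\to H^2(\text{conic},\Z)$ has image of index $2$. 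The paper closes this gap by citing H\"arterich and Shchigolev for the surjectivity of $H^2(\BS(\word{i}),\Z)\to H^2(\brick(\word{i}),\Z)$.
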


\begin{proof}
Divisors in the set \eqref{eq:invariant_divisors} are $\dil{\word{i}}$-invariant by Corollary~\ref{cor:bricks_subwrods_stable}. The rest of the proof follows the logic of the proof of \cite[Lemma 5.29]{CGGLSS}; we provide the argument for completeness. Since $\brick(\word{i})$ is smooth and irreducible, its class and Picard groups are isomorphic, $\Pic(\brick(\word{i})) \cong \Cl(\brick(\word{i}))$.
Since both $\BS(\word{i})$ and $\brick(\word{i})$ are
smooth projective complex varieties which admit algebraic torus actions with finitely many fixed points, each of them admits a Białynicki-Birula decomposition and so its higher cohomology groups vanish. Thanks to the exponential sheaf sequence,  the first Chern class then identifies their Picard groups $\Pic(\BS(\word{i}))$ and $\Pic(\brick(\word{i}))$ with the second singular cohomology $H^2(\BS(\word{i}), \Z)$ and $H^2(\brick(\word{i}), \Z)$, respectively. By \cite[Corollary 6.5.(b)]{Haerterich} and \cite[Corollary 6.12]{Shchigolev}, the canonical restriction map $H^2(\BS(\word{i}), \Z) \to H^2(\brick(\word{i}), \Z)$ is surjective, and so the restriction map $\Pic(\BS(\word{i})) \to \Pic(\brick(\word{i}))$ is surjective. Consider the set 
\begin{equation}
\label{eq:BS_divisors}
\{ \BS(\word{j}) \, | \, \word{j} \subset \word{i}, \ell(\word{j}) = \ell(\word{i}) - 1 \}, 
\end{equation}
where $\BS(\word{j})$ by slight abuse of notation means here the subvariety of $\BS(\word{i})$ defined similarly to $\overline{\brick(\word{i})^{\word{j}}}$ as the closure of 
\[
\{(\flag{0}, \dots, \flag{r}) \in \BS(\word{i}) \mid \flag{t-1} \neq \flag{t} \; \text{if and only if} \; t \in \word{j}\}. 
\]
The set \eqref{eq:BS_divisors} is a basis of $\Pic(\BS(\word{i}))$, see e.g. \cite{LFT}. As observed in the proof of \cite[Lemma 5.29]{CGGLSS}, its elements restrict to $\overline{\brick(\word{i})^{\word{j}}}$ if $\dem(\word{j}) = \dem(\word{i})$ and to $0$ otherwise. So the restriction of the set \eqref{eq:BS_divisors} is precisely 
\eqref{eq:invariant_divisors}, and the surjectivity of the restriction implies that the latter set spans $\Pic(\brick(\word{i}))$.
\end{proof}

In order to speak of a moment map for the action of $\dil{\word{i}}$ on $\brick(\word{i})$, we need to choose a $\dil{\word{i}}$-linearizable ample line bundle. We do not check if the $T$-linearizable ample line bundle $L$ defining the projective embedding used by Escobar is  $\dil{\word{i}}$-linearizable, but since $\brick(\word{i})$ is an irreducible projective variety, at least some tensor power $L^{\otimes k}$ is, in a way compatible with the $T$-action, cf. e.g. \cite[Proposition 5.2.4]{Brion_linearization} and further references in \cite{Brion_linearization}. We choose such a power, and consider the corresponding moment map. The compatibility of Hamiltonian actions implies the following  statement; the dilation therein is precisely by the factor $k$. 

\begin{corollary}
\label{cor:proj_moment_to_brick_polytopes}
The image of $\brick(\word{i})$ under the moment map for the $\dil{\word{i}}$-action, which is a convex polytope by  \cite{Atiyah, GuilleminSternbergI},  projects onto a dilation of the brick polytope $B(\word{i}, \dem(\word{i}))$.
\end{corollary}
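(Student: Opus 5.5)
The plan is to compare the two Hamiltonian structures through the homomorphism $a\colon T \to \dil{\word{i}}$ constructed in the proof of Lemma~\ref{lem:fixed_points}. That proof shows $a(t)$ acts on all of $\brick(\word{i})$ by the diagonal action of $t$. Dually, $a$ induces a linear map $a^{\ast}\colon \charlat(\dil{\word{i}})\otimes\mathbb{R} \to \charlat(T)\otimes\mathbb{R}$, and the projection in the statement will be $a^{\ast}$. The key general fact is this: if a torus $K$ acts on a projective variety $X$ with $K$-linearized ample bundle $M$, and $H \to K$ is a homomorphism of tori, then the moment map for the induced $H$-action with the induced $H$-linearization is the $K$-moment map composed with the dual (restriction) map on Lie coalgebras. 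Consequently the image of the $H$-moment polytope equals the projection of the $K$-moment polytope.

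First, I fix the linearization. As explained just before the statement, we replace Escobar's $T$-linearized ample bundle $L$ by a power $L^{\otimes k}$ which admits a $\dil{\word{i}}$-linearization. This linearization is compatible with the $T$-one, in the sense that pulling it back along $a$ gives the $k$-th power of the original $T$-linearization, possibly up to a character of $T$. Since $\brick(\word{i})$ is connected, a twist by a character only translates the polytope, so this ambiguity is harmless. The $T$-moment polytope of $(\brick(\word{i}), L^{\otimes k})$ is then $k\cdot B(\word{i},\dem(\word{i}))$, by \cite[Theorem 21]{escobar} and the fact that moment images scale linearly under tensor powers. This can be seen via the fixed-point weights: the vertex attached to a fixed point $p$ is the weight of $T$ on the fibre $L^{\otimes k}_p$, which is $k$ times the weight on $L_p$.

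Second, I compare the two polytopes through the fixed points. By Lemma~\ref{lem:fixed_points}, the $\dil{\word{i}}$-fixed points and the $T$-fixed points of $\brick(\word{i})$ coincide: both are the coordinate-flag points indexed by subwords $\word{j}$ with $\pi(\word{j})=\dem(\word{i})$. For each such point $p$, the $\dil{\word{i}}$-weight $\mu_p$ on the fibre $L^{\otimes k}_p$ restricts under $a^{\ast}$ to the $T$-weight on the same fibre. By \cite{Atiyah, GuilleminSternbergI}, the $\dil{\word{i}}$-moment polytope is $P=\conv\{\mu_p\}$ and the $T$-moment polytope is $\conv\{a^{\ast}\mu_p\}$. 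Since $a^{\ast}$ is linear, $a^{\ast}(P)=\conv\{a^{\ast}\mu_p\}=k\,B(\word{i},\dem(\word{i}))$, which is the claim.

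The main obstacle is the compatibility of linearizations, that is, ensuring the restriction along $a$ of the chosen $\dil{\word{i}}$-linearization agrees with the $T$-linearization used by Escobar. I would handle this by appealing to \cite[Proposition 5.2.4]{Brion_linearization}, which lets one choose the $\dil{\word{i}}$-linearization so that it extends the one for the image of $T$. Any residual discrepancy is a character of $T$, giving only a translation of the polytope. This is absorbed into the statement, since the projection is onto a dilation of the brick polytope up to translation.
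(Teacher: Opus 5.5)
Your proposal is correct and follows essentially the same route as the paper. Pass to a power $L^{\otimes k}$ that admits a $\dil{\word{i}}$-linearization compatible with the $T$-linearization \cite{Brion_linearization}. Then use that the $T$-moment map is the $\dil{\word{i}}$-moment map composed with the dual of $a\colon T\to\dil{\word{i}}$, and apply Escobar's identification of the $T$-moment image with the brick polytope; the dilation factor is $k$.

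Your fixed-point-weight check and your remark that a residual character twist only translates the polytope are a more detailed version of the paper's one-line appeal to the compatibility of Hamiltonian actions.
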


We can also prove the following result suggested in \cite[Remark 5.3]{EFMS}; see \cite{EFM, EFMS} for the definition of  cluster type pairs.

\begin{proposition}
\label{prop:brick_cluster_pair}
$(\brick(\word{i}), \brick(\word{i}) \backslash X(\word{i}))$ is a log Fano pair of cluster type.
\end{proposition}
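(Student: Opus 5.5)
The plan is to check the two conditions in the definition of a cluster-type pair from \cite{EFM, EFMS}: $(X,D)$ with $X$ projective and $D$ a reduced divisor is of cluster type if there is a birational map $(\C^{\times})^n \dashrightarrow X$ that is crepant for the log volume forms, meaning $K_X+D$ pulls back to the toric boundary and the standard form $\bigwedge d\log$ pulls back from a generator of $\omega_X(D)$. We also need to show that $X$ is Fano and that $D$ is anticanonical, so that $K_X + D \sim 0$ and $-K_X$ is ample.

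Here $D = \brick(\word{i})\setminus X(\word{i})$. By Theorem~\ref{thm:escobar_stratification}, $D$ is the union of the divisors $\overline{\brick(\word{i})^{\word{j}}}$ with $\ell(\word{j}) = r-1$ and $\dem(\word{j}) = \dem(\word{i})$. These are smooth, and since the stratification is dual to a simplicial (spherical) subword complex, they meet transversally. So $D$ is a simple normal crossing divisor.

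The anticanonical statement is the first step. Use the cluster structure of Theorem~\ref{thm:cluster-structure}. Any seed gives an open torus $(\C^{\times})^{n} \subset X(\word{i})$, with $n = r-\ell(\dem(\word{i}))$, on which the cluster variables are coordinates. The log volume form $\Omega = \bigwedge d\log x_k$ is mutation invariant up to sign, so it extends to a nowhere vanishing form on $X(\word{i})$ (up to the deep locus, of codimension $\geq 2$). Then compute the order of $\Omega$ along each boundary divisor $\brick(\word{j})$ using the chart $C_{\word{i}}(\phi_{\word{j}})$ from Remark~\ref{rmk:equations-braid}, in which the divisor is $z_{t,\phi}=0$ for the omitted index $t$. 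By Proposition~\ref{prop:grading-one-minus} and the localization \eqref{eq:iso-localizations}, near this divisor the braid variety looks like $\C^{\times}\times X(\word{j})$, with the $\C^{\times}$-coordinate equal to $z_{t,\phi}^{-1}$ up to a unit. So $\Omega$ acquires exactly a simple pole along the divisor, which gives $K + D \sim 0$. Equivalently, one can quote the known formula $-K_{\BS(\word{i})} = \sum$ boundary divisors $+$ (a pullback of $\mathcal{L}(\rho)$), and restrict it to the fiber $\brick(\word{i})$ using Corollary~\ref{cor:invariant_divisors} and the restriction behaviour recorded there.

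The cluster-type condition then follows: the seed torus $(\C^{\times})^n \hookrightarrow X(\word{i}) \subset \brick(\word{i})$ is an open embedding, and $\Omega$ is the volume form with simple poles exactly along $D$. So this birational map is crepant and $D$ is its full complement up to codimension two.

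The main obstacle is the Fano (ampleness) part. I would try to show that $-K = D$ is ample by exhibiting it as a positive combination of the boundary divisors and checking positivity on the $T$-invariant curves. These curves are finite in number (Lemma~\ref{lem:fixed_points}) and correspond to edges of the flip graph. The first thing to try is to invoke Escobar's or Anderson-style ampleness criteria for brick manifolds: $-K$ restricted from $\BS(\word{i})$ equals $\sum_{\word{j}}[\brick(\word{j})] + \mathcal{L}(\rho)$, where $\mathcal{L}(\rho)$ is nef with brick polytope as moment polytope. One then verifies that each boundary curve meets $D$ positively, using the local $\SL_2$-computations of Example~\ref{ex:running-transitions}.

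If ampleness turns out to fail in general, the fallback is to prove that $-K$ is big and nef. That weaker conclusion suffices for the log Calabi--Yau cluster-type statement.
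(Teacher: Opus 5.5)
Your log Calabi--Yau and cluster-type steps are essentially fine. Your second route to $K+D\sim 0$, restricting $-K_{\BS(\word{i})}=\partial\BS(\word{i})+\mathcal{L}_\rho$ to the fiber, is exactly what the paper does. But look at what that restriction gives. $\mathcal{L}_\rho$ is pulled back from $\group/\borel_+$ along the last-flag map, and $\brick(\word{i})$ is the fiber over a point of the open Schubert cell. So $\mathcal{L}_\rho$ and the normal bundle are both trivial on $\brick(\word{i})$, the boundary components indexed by essential crossings miss it, and one gets $-K_{\brick(\word{i})}=D$ exactly. Your later formula $-K=\sum[\brick(\word{j})]+\mathcal{L}(\rho)$, with a nef summand whose moment polytope is the brick polytope, is therefore wrong: the brick-polytope bundle is Escobar's, a different one. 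It cannot supply positivity.

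The genuine gap is the positivity step. You set out to prove that $-K=D$ is ample, with ``big and nef'' as a fallback, and both statements are false in general.
\begin{itemize}
\item \emph{Not ample.} The root independent word $\word{i}'=(13213)(2132)$ of Example~\ref{ex:4-crown} gives a smooth projective toric threefold $\brick(\word{i}')$. Its fan is the $A_3$ associahedron fan, with $9$ rays, so its Picard rank is $6$. Smooth toric Fano threefolds have Picard rank at most $5$, so the toric boundary $D=-K$ is not ample.
\item \emph{Not even nef.} For $\word{j}_c\word{j}_{w_0(c)}$ with $c$ bipartite in type $D_4$ (Example~\ref{ex:Coxeter}), the $g$-vector fan has a wall with relation $g+g'=g_1+g_2+g_3$. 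This gives an invariant curve $C$ with $-K\cdot C=2-3=-1$.
\item \emph{The curve check is unavailable.} Lemma~\ref{lem:fixed_points} gives finitely many fixed points, not finitely many invariant curves. For $\word{i}=111$, every curve $xy=c$ is $\dil{\word{i}}$-stable.
\end{itemize}

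What is actually needed, and what the paper proves, is weaker: $D$ must \emph{support} an effective ample divisor $A$. That is, $\sum a_{\word{j}}\,\brick(\word{j})$ is ample for some positive integers $a_{\word{j}}$. Then $(\brick(\word{i}),D-\epsilon A)$ is klt and $-(K+D-\epsilon A)=\epsilon A$ is ample. This requires no curve computations; it follows by restriction from the Bott--Samelson variety.
\begin{itemize}
\item The boundary of $\BS(\word{i})$ supports an ample divisor $\sum a_{\word{j}}\BS(\word{j})$ \cite{LFT, MR}, \cite[Appendix A]{Anderson}.
\item The components with $\dem(\word{j})<\dem(\word{i})$ do not meet $\brick(\word{i})$.
\item The remaining components restrict to the divisors in \eqref{eq:invariant_divisors}.
\item Ampleness and effectivity survive restriction to the fiber.
\end{itemize}
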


\begin{proof}
The proof strategy is already outlined in \cite[Remark 5.3]{EFMS}. We only need to show that  $\brick(\word{i}) \backslash X(\word{i})$ supports an effective ample divisor and the class $[\brick(\word{i}) \backslash X(\word{i})]$ is anticanonical. The corresponding statements about the Bott-Samelson variety $\BS(\word{i})$ are explained in detail in \cite[Appendix A]{Anderson}, following \cite{LFT, MR}. The statements about $\brick(\word{i})$ now follow by taking the restriction to our fiber of the projection map $\BS(\word{i}) \to G/\borel_+$ as discussed above, since the divisors $X_i$ in the notation of \cite[Appendix A]{Anderson} restrict to the boundary divisors in the set \eqref{eq:invariant_divisors}, and restrictions of effective, ample, and (classes of) anticanonical divisors for such a fiber are effective, ample, and anticanonical. 
\end{proof}

\section{Toric brick varieties and subword complexes}\label{sec:toric-brick}

\subsection{Toric brick varieties}
\begin{corollary}
\label{cor:torus_implies_toric}
Let $\word{i}$ be a braid word, and assume the braid variety $X(\word{i})$ is a torus. Then, $\brick(\word{i})$ is a toric variety.
\end{corollary}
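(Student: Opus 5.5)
The plan is to use Theorem~\ref{thm:action-extends}. It says the cluster dilation group $\dil{\word{i}}$ acts algebraically on $\brick(\word{i})$, extending its action on the open subset $X(\word{i}) = C_{\word{i}}(\phi_{+})$. To conclude that $\brick(\word{i})$ is toric, it is enough to exhibit a torus acting faithfully, with a dense orbit, on a normal variety. $\brick(\word{i})$ is smooth and irreducible by \cite[Theorem 20]{escobar}, hence normal. The task therefore reduces to showing that $\dil{\word{i}}$ acts on $X(\word{i})$ simply transitively, or at least with a dense open orbit, and faithfully.

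First, suppose $X(\word{i})$ is a torus. Then $\C[X(\word{i})]$ is a cluster algebra whose cluster variety is a torus of dimension $r - \ell(\dem(\word{i}))$. I claim this forces the cluster structure to have no mutable variables. Indeed, a cluster algebra with a mutable vertex admits a non-invertible cluster variable, since by really full rank a mutable variable is not a unit. In a torus, however, every regular function that is irreducible in a cluster is a monomial times a unit, so every cluster variable would have to be a unit (alternatively, use that the units of $\C[X(\word{i})]$ are, up to scalars, exactly the Laurent monomials in frozen variables). Hence every cluster variable is frozen, and the cluster algebra is the Laurent polynomial ring in $d_{\word{i}} = \dim X(\word{i})$ frozen variables.

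Next, the cluster dilation group is then the full torus $(\C^\times)^{d_{\word{i}}}$, acting on the frozen variables by independent scalings, via the description $\dil{} \cong \ker(\mathsf{mult}(\widetilde{B}^T))$ with $n = 0$. This action is simply transitive on $X(\word{i}) = \Spec \C[x_1^{\pm1},\dots,x_d^{\pm1}]$, and in particular faithful with a dense open orbit. By Theorem~\ref{thm:action-extends}, the action extends to $\brick(\word{i})$. Faithfulness persists because $X(\word{i})$ is dense, and $X(\word{i})$ is a dense open orbit. So $\brick(\word{i})$ is a normal irreducible variety containing a torus $\dil{\word{i}}$ of dimension $\dim \brick(\word{i})$ as a dense open orbit, with the action extending; that is, it is toric.

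The main obstacle is the first step: turning ``$X(\word{i})$ is abstractly isomorphic to a torus'' into ``the cluster structure has no mutable vertices.'' I would argue with units. The unit group of $\C[X(\word{i})]$ modulo $\C^\times$ has rank $d_{\word{i}} = \dim X(\word{i})$ when $X(\word{i})$ is a torus. For a really full rank, locally acyclic cluster algebra, the units modulo scalars are generated by the frozen variables (cf.\ \cite{LamSpeyer-I}). So the number of frozen variables equals the total number of cluster variables in a seed, and there are no mutable ones. With that in hand, the rest is formal.
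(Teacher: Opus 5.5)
Your proposal is correct and follows the paper's argument, which the paper leaves unwritten as essentially immediate: the torus hypothesis forces every cluster variable to be frozen, so $\dil{\word{i}}$ has rank $\dim \brick(\word{i})$, acts simply transitively on the dense open $X(\word{i})$, and Theorem~\ref{thm:action-extends} extends the action. The paper cites \cite[Corollary 2.2]{GKSB} for the frozen-variable step, and your unit-rank count (units modulo scalars are Laurent monomials in frozen variables, with really full rank ruling out monomial exchange relations) is exactly that justification. Just discard your first heuristic that cluster variables in a torus must be units (e.g.\ $1+x$ is irreducible but not a unit in $\C[x^{\pm 1}]$); the unit-rank argument is what does the work.
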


Note that the braid variety $X(\word{i})$ depends only on the underlying braid $\beta$ and not on the chosen braid word $\word{i}$, while $\brick(\word{i})$ is sensitive to the choice of braid word.

\begin{corollary}
\label{cor:subwords_tori_orbits}
Assume $\word{i}$ is such that for every subword $\word{j}$ of $\word{i}$ with the same Demazure product, the braid variety $X(\word{j})$ is a torus. 
Then, the $\dil{\word{i}}$-orbits in $\brick(\word{i})$ are in bijection with the subwords $\word{j}$ of $\word{i}$ with the same Demazure product.
\end{corollary}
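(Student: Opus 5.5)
The plan is to combine Escobar's stratification (Theorem~\ref{thm:escobar_stratification}) with Theorem~\ref{thm:action-extends}(2). By Theorem~\ref{thm:escobar_stratification},
\[
\brick(\word{i}) = \bigsqcup_{\word{j}} \brick(\word{i})^{\word{j}}, \qquad \brick(\word{i})^{\word{j}} \cong X(\word{j}),
\]
where $\word{j}$ runs over subwords with $\dem(\word{j}) = \dem(\word{i})$. By Theorem~\ref{thm:action-extends}(2), each stratum is $\dil{\word{i}}$-stable. So every orbit lies in a single stratum. It therefore suffices to show that $\dil{\word{i}}$ acts transitively on each $X(\word{j})$, which is nonempty because it is a torus by hypothesis. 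The assignment ``orbit $\mapsto$ stratum containing it'' is then the desired bijection.

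For transitivity, I would first show that the restricted action of $\dil{\word{i}}$ on $X(\word{j})$ factors through $\dil{\word{j}}$, i.e.\ that it is an action by cluster dilations of $\C[X(\word{j})]$, and then that the image is all of $\dil{\word{j}}$. By hypothesis $X(\word{j})$ is a torus, and $\dil{\word{j}}$ acts on it by translations, so transitivity would follow from surjectivity of the induced map $\dil{\word{i}} \to \dil{\word{j}}$.

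To construct this map, I would work in the chart $C_{\word{i}}(\phi_{\word{j}})$ of Remark~\ref{rmk:equations-braid}. There $X(\word{j})$ is cut out by $z_{t,\phi_{\word{j}}} = 0$ for $t \notin \word{j}$, and the remaining coordinates $z_{t,\phi_{\word{j}}}$, $t \in \word{j}$, restrict to the affine coordinates of $X(\word{j})$. These restrictions are $\dil{\word{i}}$-homogeneous by construction of the grading. Since $X(\word{j})$ is a torus, its coordinate ring is a Laurent polynomial ring in the frozen variables, and the affine coordinates are (zero or) monomials in them by Theorem~\ref{thm:cluster-structure}(2). I would then check that homogeneity of the affine coordinates forces homogeneity of the frozen variables, and hence that the image of $\dil{\word{i}}$ lies in $\dil{\word{j}} \cong (\C^\times)^{\dim X(\word{j})}$. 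For this, I would express each frozen variable as a Laurent monomial in the nonzero affine coordinates; this is plausible because the affine coordinates generate the coordinate ring.

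The main obstacle is surjectivity, since a priori the image could be a proper subtorus. I see two routes. The first is a degeneration: the orbit of a generic point of $X(\word{i})$ is open, because $X(\word{i})$ is a torus acted on simply transitively. I would then argue that the orbit closure meets $X(\word{j})$ in an open orbit, using a one-parameter subgroup limit along the stratum. The second, which I would try first, is to realize $\brick(\word{j})$ together with its $\dil{\word{i}}$-action via a chain of codimension-one steps. For $\word{j} = \word{i}\setminus\{t\}$, I would compare with the weave/deletion map and use the mechanism of Lemma~\ref{lem:iso-tori}: the torus $\dil{\word{i}}$ has rank one more than $\dil{\word{j}}$, the character of $z_{t,\phi}$ accounts for the kernel, and the restriction map $\dil{\word{i}} \to \dil{\word{j}}$ is surjective with one-dimensional kernel. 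Iterating this along a maximal chain of subwords would give surjectivity in general.
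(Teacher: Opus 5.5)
Your reduction is correct, and it is organized differently from the paper. The strata $X(\word{j})$ are $\dil{\word{i}}$-stable, and your factorization step is sound. (More directly: units of a domain graded by a lattice are homogeneous, so the frozen variables of $X(\word{j})$ are homogeneous.) Hence $\dil{\word{i}}$ acts on the torus $X(\word{j})$ by translations through a homomorphism $\rho_{\word{j}}\colon\dil{\word{i}}\to\dil{\word{j}}$, and transitivity is equivalent to surjectivity of $\rho_{\word{j}}$.

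\textbf{The gap.} That surjectivity carries all the content, and the proposal does not prove it.
Your first route is not yet an argument. The orbit closure of a generic point is all of $\brick(\word{i})$, and nothing you say shows that $X(\word{j})$ contains an open orbit.
In your second route, the claim that ``the character of $z_{t,\phi}$ accounts for the kernel'' means that $(\rho_{\word{j}},\chi_{z_{t,\phi}})\colon\dil{\word{i}}\to\dil{\word{j}}\times\C^{\times}$ is injective. That is exactly what has to be shown, and the mechanism of Lemma~\ref{lem:iso-tori} does not transfer:
\begin{itemize}
\item In Lemma~\ref{lem:iso-tori}, the smaller braid variety arises by forgetting one flag. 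So trivial action on it and on one coordinate pins down every point.
\item Here $X(\word{j})$ is a boundary divisor of the chart. Chart coordinates can vanish on $X(\word{j})$ without vanishing on the chart, so their weights are not seen by the action on $X(\word{j})$.
\end{itemize}
For example, take $\word{i}=1211$ and $t=4$, which satisfies the hypotheses. Then $C_{\word{i}}(+++-)$ is the line $\{z_{1,\phi}=z_{2,\phi}=0,\ z_{3,\phi}=z_{4,\phi}\}$, while $X(121)$ is a point.

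\textbf{How to repair it in your framework.} Let $C=C_{\word{i}}(\phi_{\word{j}})$, which is smooth, affine and $\dil{\word{i}}$-stable.
\begin{itemize}
\item The ideal $(z_{t,\phi})$ is prime with quotient $\C[X(\word{j})]$: put $z_{t,\phi}=0$ in the defining equation of the chart.
\item Hence every nonzero homogeneous $f\in\C[C]$ can be written $f=z_{t,\phi}^{k}g$ with $g$ homogeneous and $g|_{X(\word{j})}\neq 0$.
\item If $\xi\in\ker\rho_{\word{j}}$ and $\chi_{z_{t,\phi}}(\xi)=1$, then $\xi$ fixes every such $f$. So it acts trivially on $C\cap X(\word{i})$, which is nonempty and open since $t$ is not essential. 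Faithfulness then gives $\xi=1$.
\end{itemize}
The rank count now gives surjectivity. Iterating along a chain is legitimate: on an intermediate $\brick(\word{i}')$, the $\dil{\word{i}}$-action and the pullback of the $\dil{\word{i}'}$-action along $\rho_{\word{i}'}$ agree on the dense set $X(\word{i}')$, hence everywhere.

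\textbf{Comparison with the paper.} The paper bypasses all of this. Since $X(\word{i})$ is a torus, $\brick(\word{i})$ is $\dil{\word{i}}$-toric (Corollary~\ref{cor:torus_implies_toric}). So it has finitely many orbits, and the irreducible stable set $X(\word{j})$ contains a dense orbit.
That is your first route made precise. Combined with your factorization step it finishes immediately, because all orbits in $X(\word{j})$ are cosets of $\rho_{\word{j}}(\dil{\word{i}})$. This also cleanly justifies the paper's last sentence, that $X(\word{j})$ must equal its dense orbit.
The paper's route is shorter but relies on the orbit structure of normal toric varieties. Your repaired second route is more hands-on, at the cost of the primality check and an induction along chains of subwords.
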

\begin{proof}
    By Theorem \ref{thm:action-extends}(2), for each subword $\word{j}$ with $\dem(\word{j}) = \dem(\word{i})$ we have that $X(\word{j})$ is $\dil{\word{i}}$-stable, so it is a union of $\dil{\word{i}}$-orbits. It suffices to show that $X(\word{j})$ is a single orbit. Since $\brick(\word{i})$ is compact and $\dil{\word{i}}$-toric, it has a finite number of orbits, so the same is true for $X(\word{j})$. In particular, $X(\word{j})$ contains a dense orbit, that must be a torus, cf. e.g. \cite[Section 3.1]{Fulton}. Since $X(\word{j})$ is itself a torus, it must coincide with its dense orbit.
\end{proof}

In fact, if $X(\word{i})$ is a torus then for every subword $\word{j}$ of $\word{i}$ with the same Demazure product we automatically have that $X(\word{j})$ is a torus, as we show in Proposition \ref{prop:no mutables in subwords} below. 

\begin{lemma}\label{lem:criterion for mutables}
Let $\word{j}$ be a word that satisfies $\pi(\word{j})=w_0$ and $\word{j}$ is not reduced. Then $X(\word{j})$ has a mutable vertex.
\end{lemma}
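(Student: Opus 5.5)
The plan is to argue by contradiction using fixed points, rather than by analysing weaves directly. Let $\word{j}$ have length $r$, with $\pi(\word{j}) = w_0$ and $\word{j}$ not reduced. Since $\pi(\word{j}) \le \dem(\word{j})$ in Bruhat order and $w_0$ is maximal, we get $\dem(\word{j}) = w_0 = \pi(\word{j})$. Because $\word{j}$ is not reduced we have $r > \ell(w_0)$, hence $\dim X(\word{j}) = r - \ell(w_0) \geq 1$.

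Suppose, towards a contradiction, that the cluster structure on $\C[X(\word{j})]$ from Theorem~\ref{thm:cluster-structure} has no mutable vertices. Then there is a single seed, consisting only of frozen variables $x_1, \dots, x_m$. In the cluster structure of \cite{CGGLSS} the frozen variables are invertible, so $\C[X(\word{j})] = \C[x_1^{\pm1}, \dots, x_m^{\pm1}]$. Comparing dimensions, $m = r - \ell(w_0) \geq 1$, so $X(\word{j}) \cong (\C^\times)^m$. The cluster dilation group $\dil{\word{j}}$ consists of the automorphisms rescaling each $x_k$ independently, so $\dil{\word{j}} \cong (\C^\times)^m$ acts on $X(\word{j})$ by translation. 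This is consistent with the rank statement for really full rank, where an empty exchange matrix trivially qualifies. Translation is a free action, so $X(\word{j})^{\dil{\word{j}}} = \emptyset$ because $m \geq 1$.

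On the other hand, Lemma~\ref{lem:fixed-points-braid} applies, since $\pi(\word{j}) = \dem(\word{j})$. It gives $X(\word{j})^{\dil{\word{j}}} = \{(0, \dots, 0)\} \neq \emptyset$. This contradiction shows that $X(\word{j})$ must have a mutable vertex.

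The only delicate point is justifying that, with no mutable vertices, $\C[X(\word{j})]$ equals the Laurent ring in the frozen variables. This needs frozen variables to be invertible in the conventions of \cite{CGGLSS}, together with the cluster algebra (equal to its upper cluster algebra by local acyclicity) coinciding with $\C[X(\word{j})]$. I would cite these facts directly. An alternative avoids the full ring identification: every function on $X(\word{j})$, in particular each affine coordinate, is a Laurent monomial in the frozen variables by Theorem~\ref{thm:cluster-structure}(2). Any such monomial of nonzero weight, or any nonconstant invertible monomial, cannot vanish at the fixed point in a way compatible with all $x_k$ being units. Evaluating the units $x_k$ at $(0, \dots, 0)$ and noting they are nonzero there, the free translation action still contradicts the point being fixed.
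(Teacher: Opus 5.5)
Your argument is correct, but it takes a different route from the paper. The paper argues combinatorially. Since $\word{j}$ is not reduced, braid moves (Tits' word property) bring it to a word $\word{j}_1(ii)\word{j}_2$. Then $\pi(\word{j}_1\word{j}_2)=\pi(\word{j})=w_0$ forces $\dem(\word{j}_1\word{j}_2)=w_0$, so by \cite[Lemma 7.2]{CGGLSS} the trivalent vertex resolving $ii\to i$ is not frozen. This exhibits an explicit mutable vertex in an explicit weave.

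You instead argue by contradiction through geometry. An all-frozen cluster structure would make $X(\word{j})$ a torus of positive dimension on which $\dil{\word{j}}$ acts freely by translation. But Lemma~\ref{lem:fixed-points-braid} produces the fixed point $(0,\dots,0)$ exactly because $\pi(\word{j})=\dem(\word{j})$. Your route avoids the word property and the Demazure-frozen criterion of \cite{CGGLSS}. It needs instead the invertibility of frozen variables and the identification of the cluster algebra with $\C[X(\word{j})]$, which is the same input behind (a)$\Leftrightarrow$(b) in Theorem~\ref{thm:braid-is-torus}. It is also non-constructive: it shows a mutable vertex exists but does not locate one, whereas the paper's route stays inside weave combinatorics and names the vertex.

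Your fallback paragraph is muddled as written, since not every function on $X(\word{j})$ is a Laurent monomial in the frozen variables. Its correct core is shorter than your main argument, however. The affine coordinates embed $X(\word{j})$ into $\C^r$ and $\dim X(\word{j})\ge 1$, so some $z_t$ is a nonzero cluster monomial. If all variables were frozen, $z_t$ would be a unit, yet $z_t$ vanishes at the point $(0,\dots,0)\in X(\word{j})$. This version does not even need the $\dil{\word{j}}$-part of Lemma~\ref{lem:fixed-points-braid}.
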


\begin{proof}
Assume we can find braid moves that transform $\word{j}$ to $\word{j}'=\word{j}_1 (i i)\word{j}_2 $ with $\delta(\word{j}_1\word{j}_2)<\delta(\word{j}')=\delta(\word{j})$.
By \cite[Lemma 7.2]{CGGLSS} this implies that there is a frozen vertex associated to the reduction $\sigma_i\sigma_i\to \sigma_i$.
However, we get a contradiction as 
\[
w_0=\pi(\word{j})=\pi(\word{j}')=\pi(\word{j}_1\word{j}_2)\le \delta(\word{j}_1\word{j}_2).
\]
Therefore, in any weave for $\word{j}$ that starts with the braid moves turning $\word{j}$ to $\word{j}'$  the subsequent reduction $\sigma_i\sigma_i\to \sigma_i$ yields a mutable vertex.   
\end{proof}

\begin{proposition}\label{prop:no mutables in subwords}
 Let $\word{i}$ be a braid word with $\delta(\word{i})=w_0$ and $\word{j}$ a subword with $\delta(\word{j})=\delta(\word{i})$. 
 If $X(\word{i})$ has no mutable vertices then $X(\word{j})$  also has no mutable vertices. 
\end{proposition}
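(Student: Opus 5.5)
Since $X(\word{i})$ has no mutable vertices, its cluster algebra has only frozen variables, so $X(\word{i})$ is an algebraic torus. I will prove the contrapositive: if $X(\word{j})$ has a mutable vertex, then so does $X(\word{i})$. It suffices to treat the case $\ell(\word{j}) = \ell(\word{i}) - 1$, i.e.\ $\word{j}$ is $\word{i}$ with one non-essential crossing $t_0$ deleted. The general case follows by induction. Given a subword $\word{j}$ with the same Demazure product, we can delete the letters of $\word{i}\setminus\word{j}$ one at a time. Every intermediate word contains $\word{j}$, so its Demazure product is squeezed between $\dem(\word{j})$ and $\dem(\word{i})$ and hence equals $w_0$.

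For the single-deletion step, I will use the doubled-word construction from the proof of Lemma~\ref{lem:iso-tori}, but in reverse. Inserting the letter $i_{t_0}$ next to an existing occurrence changes the braid variety, so instead I compare $X(\word{i})$ and $X(\word{j})$ through the stratification of $\brick(\word{i})$. By Remark~\ref{rmk:equations-braid}, $X(\word{j})$ is a $\dil{\word{i}}$-stable closed subvariety of codimension one in the chart $C_{\word{i}}(\phi_{\word{j}})$, cut out by $z_{t_0,\phi_{\word{j}}}=0$.

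I would argue with tori and dimensions, as in Corollary~\ref{cor:subwords_tori_orbits}. Suppose $X(\word{i})$ is a torus. Then $\dil{\word{i}}$ has rank $\dim X(\word{i}) = r-\ell(w_0)$, and $\brick(\word{i})$ is $\dil{\word{i}}$-toric by Corollary~\ref{cor:torus_implies_toric}. The strata $X(\word{j})$ are $\dil{\word{i}}$-stable by Theorem~\ref{thm:action-extends}. A toric variety has finitely many orbits, so $X(\word{j})$ is a finite union of torus orbits. Since $X(\word{j})$ is irreducible, it contains a dense orbit $O \cong \dil{\word{i}}/\mathrm{Stab}$. The boundary $X(\word{j})\setminus O$ is a union of lower-dimensional orbits. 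The key claim is that this boundary is empty. It should follow from the fact that the toric boundary strata of $\brick(\word{i})$ are exactly the closures $\overline{\brick(\word{i})^{\word{k}}}$, and these are disjoint from the stratum $X(\word{j})$ unless $\word{k}=\word{j}$.

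To make this precise, I would count orbits against fixed points. By Lemma~\ref{lem:fixed_points}, the torus-fixed points in $\brick(\word{i})$ correspond to the subwords $\word{k}$ with $\pi(\word{k})=w_0$. By Lemma~\ref{lem:fixed-points-braid}, each stratum $X(\word{k})$ contains at most one of them, and does so exactly when $\word{k}$ is reduced. The maximal torus-invariant cones of the fan correspond to these fixed points. Each such cone is simplicial of dimension $r-\ell(w_0)$, because $\brick(\word{i})$ is smooth. Its faces correspond to orbits, and the orbit closures containing a fixed point form a Boolean lattice. On the other hand, the strata $\brick(\word{i})^{\word{k}}$ whose closures contain the fixed point for a reduced $\word{k}_0$ are indexed by the supersets $\word{k}\supseteq\word{k}_0$, which also form a Boolean lattice of rank $r-\ell(w_0)$.

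Matching these two Boolean lattices forces each stratum to be a single orbit, which gives that $X(\word{j})$ is a torus. A torus with its cluster structure has no mutable vertices: otherwise some mutation relation would give a nonconstant regular function that is not a monomial, and one would need to check that this contradicts invertibility. Alternatively, I would use that the cluster algebra of a torus is the Laurent polynomial ring, so every cluster variable is a unit, and an exchange relation $xx'=M_1+M_2$ cannot hold with $x,x'$ units.

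The main obstacle is the orbit-versus-stratum matching: I need to show that a stratum cannot split into several orbits. For this I would check that every stratum $X(\word{k})$ meets the closure of the cone at some fixed point. Since every subword with $\dem(\word{k})=w_0$ contains a reduced subword $\word{k}_0$ for $w_0$, this holds. A cardinality comparison between the two Boolean lattices then finishes the argument.
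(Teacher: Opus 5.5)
Your overall strategy is different from the paper's, but the key orbit-versus-stratum step has a genuine gap. That step needs every $\dil{\word{i}}$-fixed point of $\brick(\word{i})$ to lie in a stratum $X(\word{k})$ with $\word{k}$ reduced. You justify this by reading Lemma~\ref{lem:fixed-points-braid} as saying $X(\word{k})$ contains a fixed point exactly when $\word{k}$ is reduced, but that is not what the lemma says. It says $X(\word{k})$ contains a fixed point iff $\pi(\word{k})=\dem(\word{k})$, i.e.\ iff $\pi(\word{k})=w_0$, and this includes non-reduced words: for $\word{k}=111$ in type $A_1$, $(0,0,0)\in X(111)$. Your own preceding sentence, via Lemma~\ref{lem:fixed_points}, indexes fixed points by all $\word{k}$ with $\pi(\word{k})=w_0$. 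The two claims are compatible only if $\word{i}$ has no non-reduced subword $\word{k}$ with $\pi(\word{k})=w_0$. That is condition (2) of Proposition~\ref{prop:torus_no_nonreduced}, and the paper obtains (1)$\Rightarrow$(2) there only by using the proposition you are proving. As written, your argument assumes its conclusion.

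This is not cosmetic. If such a $\word{k}\subseteq\word{j}$ existed, $X(\word{k})$ would contain a fixed point and have dimension $\ell(\word{k})-\ell(w_0)\ge 2$, so it would not be a single orbit. At a reduced fixed point $p$, your bijection between the $2^n$ orbits of the affine toric chart $U_p\cong\C^n$ and the $2^n$ strata indexed by $\word{k}\supseteq\word{k}_0$ only controls orbits whose closures contain a reduced fixed point. Your remark that every stratum contains a reduced $\word{k}_0$ only places the dense orbit of each stratum in such a chart, not every orbit of it.

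You need a separate argument that toricity excludes non-reduced fixed points. One that fits your setup goes as follows. An invariant prime divisor misses the open orbit $X(\word{i})$, so by irreducibility and dimension it equals $\overline{\brick(\word{i})^{\word{i}\setminus a}}$ for some position $a$. A fixed point of a smooth complete toric variety lies on exactly $n=r-\ell(w_0)$ invariant prime divisors. By Theorem~\ref{thm:escobar_stratification}, the fixed point attached to $\word{k}$ lies only on those with $a\notin\word{k}$, so $n\le r-\ell(\word{k})$ and $\word{k}$ is reduced.

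With that established, your local matching finishes the job, using two further facts: every orbit closure contains a fixed point, and $S\cap U_p$ is dense in the irreducible stratum $S$. Together these show each stratum is one orbit. In your final step you should also invoke really full rank, which excludes a mutable vertex with zero exchange column (where $xx'=2$ is compatible with units).

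The paper avoids geometry entirely. For a single deleted letter it cyclically rotates so that letter comes first, $\word{i}'=i_j\word{j}'$. It then extends a weave of the consecutive subword $\word{j}'$ containing a non-frozen trivalent vertex to a weave of $\word{i}'$ containing the same vertex, and finishes by induction on the number of deleted letters.
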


\begin{proof}
Let $\word{i}={i_1}\cdots {i_r}$ and $J=\{j_1,\dots,j_s\}\subset[r]$ be the support of the subword $\word{j}$, that is $\word{j}={i_{j_1}}\cdots {i_{j_s}}$. 
We proceed by induction on $\ell(\word{i})-\ell(\word{j})=r-s$.

If $r-s=1$ we may assume that $J=\{2,\dots,r\}$ using \cite[\S5.5]{CGGLSS} as follows: if $J\not=\{2,\dots,r\}$ let $[r]\setminus J=\{j\}$. Consider the cyclic rotation $\word{i}'$ of $\word{i}$ so that $\word{i}'={i_j} \word{j}'$.
By \cite[Theorem 5.31]{CGGLSS} there is a bijection between cluster variables of $X(\word{i})$ and $X(\word{i}')$ preserving mutable cluster variables.
As $\word{j}'$ is a consecutive subword of $\word{i}'$, if $X(\word{j}')$ has a mutable vertex obtained from a weave $\mathfrak W$, then we can extend $\mathfrak W$ to a weave for $\word{i}'$ which contains the same mutable vertex. This proves the claim for $r-s=1$.

If $r-s>1$ choose the first letter of $\word{i}$, say ${i_k}$ that is not a letter of $\word{j}$. Let $\word{j}'$ be the word obtained from $\word{i}$ by removing ${i_k}$. So the induction step applies to $\word{j}'\subset\word{i}$ and we deduce $X(\word{j}')$ has no mutable vertices. Moreover, $\ell(\word{j}')-\ell(\word{j})=r-s-1$ and the rest of the claim follows by induction.
\end{proof}

We can now give a characterization of those words $\word{i}$ for which $X(\word{i})$ is a torus. 

\begin{proposition}
\label{prop:torus_no_nonreduced}
$\word{i} = {i_1}\cdots {i_r}$ be a braid word such that $\delta(\word{i})=w_0$. Then the following are equivalent
\begin{enumerate}
    \item $X(\word{i})$ is a torus, \emph{i.e.} there are no mutable cluster variables;
    \item every subword $\word{j}$ of $\word{i}$ that satisfies $\pi(\word{j})=w_0$ is reduced.   
\end{enumerate}
\end{proposition}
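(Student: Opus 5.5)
The two implications are proved separately. (1)$\Rightarrow$(2) reduces to Lemma~\ref{lem:criterion for mutables} via Proposition~\ref{prop:no mutables in subwords}. For (2)$\Rightarrow$(1) I would build a complete Demazure weave with no mutable trivalent vertex and invoke Theorem~\ref{thm:cluster-structure}(1).

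\emph{(1)$\Rightarrow$(2).} Assume $X(\word{i})$ has no mutable vertices. Let $\word{j}$ be a subword with $\pi(\word{j}) = w_0$. Since $\pi(\word{j}) \le \dem(\word{j}) \le \dem(\word{i}) = w_0$, we get $\dem(\word{j}) = w_0 = \dem(\word{i})$. Proposition~\ref{prop:no mutables in subwords} then says $X(\word{j})$ has no mutable vertices. If $\word{j}$ were not reduced, Lemma~\ref{lem:criterion for mutables} would produce a mutable vertex for $X(\word{j})$, a contradiction. So $\word{j}$ is reduced.

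\emph{(2)$\Rightarrow$(1).} I argue by contraposition, showing that a mutable vertex forces a non-reduced subword $\word{j}$ with $\pi(\word{j}) = w_0$.
\begin{enumerate}
\item By Theorem~\ref{thm:cluster-structure}(1), it suffices to find one complete Demazure weave $\word{i} \to w_0$ in which every trivalent vertex is frozen. Then the seed has no mutable variables and $X(\word{i})$ is a torus.
\item I build this weave greedily, by induction on length. Read $\word{i}$ left to right, maintaining a reduced word for $\dem(i_1\cdots i_k)$. When $\ell(\dem(i_1\cdots i_{k})s_{i_{k+1}})$ decreases, apply braid moves to bring the prefix to a reduced word ending in $i_{k+1}$, then perform the trivalent move $i_{k+1}i_{k+1}\to i_{k+1}$.
\item This trivalent vertex is mutable exactly when deleting the corresponding crossing does not lower the Demazure product of the current word. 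Suppose it is mutable. Then the current word $\word{u}\, i\, i\, \word{v}$ satisfies $\dem(\word{u}\word{v}) = w_0$.
\item In that case $\word{u}\word{v}$ contains a reduced subword for $w_0$. Adjoining the two letters $ii$ to it, at their positions, gives a word whose product is still $w_0$ but which is not reduced.
\item Pull this back to a subword of $\word{i}$ along the braid moves already performed. The earlier trivalent moves only merged letters, so each letter of the current word traces back to a letter of $\word{i}$. The resulting subword $\word{j}$ of $\word{i}$ has $\pi(\word{j}) = w_0$ and is not reduced, contradicting (2).
\end{enumerate}

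\textbf{Main obstacle.} The difficult step is (5). Braid moves on the current word do not correspond to subwords of $\word{i}$, and letters produced by earlier trivalent moves stand for merged pairs. I would avoid the pullback issue by instead using Lemma~\ref{lem:fixed-points-braid} and Escobar's stratification (Theorem~\ref{thm:escobar_stratification}) as follows.
\begin{itemize}
\item If $X(\word{i})$ is not a torus, then some stratum $X(\word{j})$ of positive dimension is minimal, with $\word{j}$ a subword of $\word{i}$ and $\dem(\word{j}) = w_0$.
\item Removing any single further letter from $\word{j}$ either lowers the Demazure product or gives a subword whose braid variety is a torus.
\item Now take the longest subword $\word{j}$ with $\dem(\word{j}) = w_0$ such that $X(\word{j})$ is not a torus, and apply a mutation-based argument to the trivalent vertex coming from a letter $ii$. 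The exchange relation at that vertex yields a $T$-fixed point in $X(\word{j})$ itself.
\item By Lemma~\ref{lem:fixed-points-braid}, this fixed point forces $\pi(\word{j}) = w_0$. Since $\dim X(\word{j}) > 0$, we have $\ell(\word{j}) > \ell(w_0)$, so $\word{j}$ is non-reduced.
\end{itemize}
The step I expect to be hardest is producing this fixed point inside the minimal non-toric stratum. I would attempt it first via the freezing/localization of Lemma~\ref{lem:3valent-dilation}.
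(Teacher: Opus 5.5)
Your proof of (1)$\Rightarrow$(2) is the paper's argument. Your first outline of (2)$\Rightarrow$(1), steps (1)--(4), is also the paper's strategy. The gap is step (5), which you identify but do not close.

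The justification you give, that letters of the current word trace back to letters of $\word{i}$, breaks down at braid moves. The blocks $121$ and $212$ do not even contain the same multiset of letters, so no correspondence of letters preserves products of subwords. Hence a non-reduced subword with product $w_0$ in the current word does not obviously come from one in $\word{i}$.

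The fixed-point route you substitute is not an argument. Your choice of $\word{j}$ switches from minimal to longest between bullets. For the longest choice, $\word{j}=\word{i}$, the conclusion is false. Take $\word{i}=1111$ in type $A_1$. Then $X(\word{i})$ is not a torus, by your own (1)$\Rightarrow$(2), since $111$ is a non-reduced subword with $\pi=s_1=w_0$. Yet $\pi(\word{i})=e$, so by Lemma~\ref{lem:fixed-points-braid} it has no $T$-fixed point at all. For the minimal choice, the claim $\pi(\word{j})=w_0$ is true, but it is essentially equivalent to the implication you are trying to prove. Nothing supports the assertion that an exchange relation produces a $T$-fixed point: exchange relations are identities among regular functions, and Lemma~\ref{lem:3valent-dilation} concerns dilation groups, not fixed points.

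The missing idea is to propagate condition (2) \emph{forward} along the weave instead of pulling a single subword back through all of it.
\begin{itemize}
\item Condition (2) passes through a trivalent move $\word{u}ii\word{v}\to\word{u}i\word{v}$ trivially, since every subword of $\word{u}i\word{v}$ is a subword of $\word{u}ii\word{v}$ with the same product.
\item It passes through a braid move by Lemma~\ref{lem:braid_invariance_property_2}, which is a local argument inside the block. Take a non-reduced subword for $w$ of $\word{i}_1(ji\ldots)\word{i}_2$.
\begin{itemize}
\item If its part inside the block is reduced, replace that part by a reduced subword of $(ij\ldots)$ with the same product. This exists because $(ij\ldots)$ is a reduced word for the longest element of the dihedral parabolic subgroup.
\item If that part is not reduced, it is a proper part of the block. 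So its product is a dihedral element of length at most $m_{ij}-3$, and every such element also has a non-reduced expression inside $(ij\ldots)$.
\end{itemize}
In either case the new subword of $\word{i}_1(ij\ldots)\word{i}_2$ still has product $w$ and is non-reduced.
\end{itemize}
With (2) holding at every horizontal slice, your step (4) applies verbatim to every trivalent vertex of an \emph{arbitrary} complete weave and shows that it is frozen. Theorem~\ref{thm:cluster-structure}(1) then gives a seed with no mutable variables. This is exactly the paper's proof, and it makes both your greedy construction and the fixed-point detour unnecessary.
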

\begin{proof} We show both implications.
\begin{itemize}
    \item[(1) $\Rightarrow$ (2)] 
Consider $\word{j}$ a subword of $\word{i}$ with $\pi(\word{j})=w_0$. We need to show that $\word{j}$ is reduced.
As $X(\word{i})$ has no mutable cluster variables, by Proposition~\ref{prop:no mutables in subwords} also $X(\word{j})$ does not have any mutable vertices.
Now Lemma~\ref{lem:criterion for mutables} implies that $\word{j}$ is reduced.

\item[(2) $\Rightarrow$ (1)] 
Assume that $\word{i}$  satisfies condition (2). Consider an arbitrary weave for $\word{i}$. Each braid move preserves the property (2) by Lemma~\ref{lem:braid_invariance_property_2}. Each trivalent vertex of the weave is Demazure frozen in the sense of \cite[Section 7.1]{CGGLSS}, and thus the corresponging cluster variable is frozen by \cite[Lemma 7.2]{CGGLSS}. Passing down through this trivalent vertex also trivially preserves the property (2). This shows that we can continue passing through the horizontal cross-sections from top to bottom until we reach the end of the weave, at each step the property will be satisfied, and each trivalent vertex would correspond to a frozen variable. So in the cluster seed for this weave, each variable is frozen, which implies that $X(\word{i})$ has no mutable variables.
\end{itemize}
\end{proof}

\subsection{Double root free words and polytopal realizations of subword complexes} In fact, the condition on a word $\word{i}$ in Proposition \ref{prop:torus_no_nonreduced} is equivalent to $\word{i}$ being double root free. The next three results on properties of words in Coxeter groups hold, with the same proofs, in larger generality than that of Weyl groups of simple algebraic groups. For possible future reference, we formulate and prove them in full generality.

\begin{lemma}
\label{lem:braid_invariance_property_2}
Let $(W, S)$ be a Coxeter system. Consider a group element $w \in W$ and two words of finite length related by a braid move  $\word{i} = \word{i}_1 (ij \ldots) \word{i}_2 \to \word{i}_1 (j i \ldots) \word{i}_2 = \word{i}'$.
Then $\word{i}$ does not contain any subwords which are non-reduced expressions of 

$w$
 if and only if $\word{i}'$ does not contain any subwords which are non-reduced expressions of 
 
 $w$.
 \end{lemma}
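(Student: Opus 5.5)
The statement is symmetric under exchanging $\word{i}$ and $\word{i}'$, so it suffices to prove one direction. I will show that if $\word{i}'$ contains a subword which is a non-reduced expression of $w$, then so does $\word{i}$. The whole argument is local to the block where the braid move happens. Write the block of $\word{i}'$ as $\word{b}' = (ji\ldots)$ and the block of $\word{i}$ as $\word{b} = (ij\ldots)$, both of length $m = m_{ij}$. A subword of $\word{i}'$ then decomposes as $\word{k}_1\word{u}\word{k}_2$, where $\word{k}_1$ is a subword of $\word{i}_1$, $\word{u}$ is a subword of $\word{b}'$, and $\word{k}_2$ is a subword of $\word{i}_2$. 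Set $x = \pi(\word{u})$, which lies in the rank two parabolic subgroup $W_{ij}$, a dihedral group of order $2m$. I will replace $\word{u}$ by a subword $\word{v}$ of $\word{b}$ with $\pi(\word{v}) = x$. This keeps the product of $\word{k}_1\word{v}\word{k}_2$ equal to $w$, and I need to choose $\word{v}$ so that $\word{k}_1\word{v}\word{k}_2$ is still non-reduced.

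The key input is a dihedral fact. Every element of $W_{ij}$ has a reduced expression that is a subword of the alternating word $\word{b}$. Indeed, $\dem(\word{b})$ is the longest element of $W_{ij}$, so every element of $W_{ij}$ lies below it in Bruhat order, and the subword characterization of the Demazure product recalled in Section~\ref{sec:preliminaries} applies. Concretely, any alternating word of length at most $m$ embeds in $\word{b}$, starting at the first or second position.

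The argument then splits into two cases. \emph{Case 1: $\word{u}$ is reduced.} Take $\word{v}$ to be a reduced expression of $x$ sitting inside $\word{b}$. Then $\word{v}$ has the same length as $\word{u}$ and the same product, so $\word{k}_1\word{v}\word{k}_2$ has the same length and product as $\word{k}_1\word{u}\word{k}_2$. It is therefore also a non-reduced expression of $w$.

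\emph{Case 2: $\word{u}$ is non-reduced.} It now suffices to find a non-reduced subword $\word{v}$ of $\word{b}$ with $\pi(\word{v}) = x$. Then the length of $\word{k}_1\word{v}\word{k}_2$ exceeds $\ell(\word{k}_1) + \ell(x) + \ell(\word{k}_2) \geq \ell(w)$, so it is automatically non-reduced. If $m = 2$, every subword of $ji$ is reduced, so this case does not occur. Now assume $m \geq 3$. Let $k = \ell(\word{u}) \leq m$; then $\ell(x) \leq k-2 \leq m-2$. I will look for $\word{v}$ of the form ``two equal letters at positions $p$ and $p+2$ of $\word{b}$, followed by an alternating reduced word for $x$ using later positions''. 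Alternatively, $\word{v}$ can be ``a reduced word for $x$ with one letter $s$ replaced by $s\,t\,s$ read off three consecutive positions'', which multiplies to $s t s$ rather than $s$. That variant then has to be corrected by adjusting the choice of reduced word for $x$.

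The main obstacle is this bookkeeping. One must check that, given $\ell(x) \leq m-2$, there is always enough room in the alternating word of length $m$ to fit a non-reduced expression of $x$ of length $\ell(x)+2$. This in turn requires handling which letter the reduced word for $x$ must start with. My first attempt will be the following. Use the first two positions of $\word{b}$ together with the third letter, which equals the first, to produce $i\,i$ when $x$ should be preceded by a cancelling pair. Otherwise, shift by one position. Then verify the small cases $\ell(x) \in \{m-3, m-2\}$ by hand in the dihedral group.
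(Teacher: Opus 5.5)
Your reduction is sound. Localizing to $\word{u}\subseteq\word{b}'$ works, and Case~1 is correct: you replace a reduced $\word{u}$ by a reduced word for $x$ inside $\word{b}$ of the same length. In Case~2 your length estimate correctly shows that \emph{any} non-reduced subword $\word{v}$ of $\word{b}$ with $\pi(\word{v})=x$ will do. One small slip: the alternating word of length $m$ starting with $j$ is $\word{b}'$ itself and does not embed in $\word{b}$. This is harmless, since your Bruhat-order argument supplies $\word{b}$ as a reduced word for the same element.

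The gap is that the dihedral statement carrying Case~2 is never proved, and the version you set out to prove is false. You only record $\ell(x)\le m-2$. But if $\ell(x)=m-2$, parity forces any non-reduced expression of $x$ to have length at least $\ell(x)+2=m$. The only candidate subword of $\word{b}$ is then $\word{b}$ itself, which is reduced (e.g.\ $m=3$, $x=s_i$). So your planned ``verification by hand'' of the case $\ell(x)=m-2$ cannot succeed.

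Your first-attempt placement also fails at the boundary. A cancelling pair at positions $p,p+2$ followed by the reduced word for $x$ overflows when $\ell(x)=m-3$ and the reduced word of $x$ begins with $i$: it would need positions $2,4,5,\dots,m+1$.

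The missing observation is the one the paper uses, namely that the leftover alternating part has length at most $m_{ij}-3$. Since $\word{b}'$ is reduced, a non-reduced $\word{u}$ is a proper subword of $\word{b}'$, so $\ell(\word{u})\le m-1$. By parity, $L:=\ell(x)\le \ell(\word{u})-2\le m-3$.

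The cancelling pair can then always be placed at the appropriate end, exactly as in the paper's step of attaching the first and third (or third-to-last and last) letters:
\begin{itemize}
\item If $x=e$, or the reduced word of $x$ (unique since $L<m$) begins with $j$, take positions $1,3,4,\dots,L+3$ of $\word{b}$. These read $i\,i$ followed by the alternating word of length $L$ starting with $j$.
\item If that reduced word begins with $i$, take positions $1,2,\dots,L+1,L+3$. These read the alternating word of length $L+1$ starting with $i$, followed by a repeat of its last letter.
\end{itemize}
Both fit because $L+3\le m$, both are non-reduced, and both have product $x$.

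With this inserted, your argument is complete. It is essentially the paper's proof with a slightly tidier case division. The paper first deletes extra cancelling pairs until a single one remains. Your Case~2 length estimate makes that step unnecessary, since you work directly with $x=\pi(\word{u})$.
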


\begin{proof}
 If we have a non-reduced  subword for $w$ in $\word{i}'$ which is not a subword of $\word{i}$, there are two cases. Either it contains the entire consecutive subword $(ji \ldots)$, in which case the same positions in $\word{i}$ give a non-reduced  subword for $w$. 
 Or it contains a piece of $(ji \ldots)$ with some letters removed in the middle, which  has either some $ii$ with the letter $j$ between them omitted, or some $j j$ with the letter $i$ between them omitted. 
 If there are multiple such pairs, delete one, because it will still be a non-reduced subword for $w$. 
 This may create a new such pair, but the length of the subword will be shorter. 
 Repeat until there is exactly one such pair and after deleting it no new pair is created.  
 Except for this pair, the part of the remaining subword inside $(ji \ldots)$ has length at most $m_{ij} - 3$ and is either an initial or a final subword of $(i j \ldots)$. 
 Attach to it either the last and third to last letters of $(ij \ldots)$, if it was initial, or first and third letters of $(ij \ldots)$, if it was final.
We get a non-reduced subword for $w$ inside $\word{i}$. 
Thus, if the property did not hold for $\word{i}'$, it does not hold for $\word{i}$ either. The other implication follows by switching the roles of $\word{i}$ and $\word{i}'$.
\end{proof}

\begin{proposition}
\label{prop:no_nonreduced_double_root_free}
Let $(W, S)$ be a Coxeter system, $\word{i}$ a word of finite length. Then $\word{i}$ does not contain any subwords which are non-reduced expressions of $\dem := \dem(\word{i})$ if and and only if $\word{i}$ is double root free.
\end{proposition}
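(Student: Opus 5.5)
We prove both directions by contraposition, translating between non-reduced subwords for $\dem$ and pairs of equal roots in a facet of $\Delta(\word{i},\dem)$. The basic tool is the standard exchange computation. Let $\word{f}$ be a facet, so that $\word{i}\setminus\word{f}$ is a reduced word for $\dem$. For $k\in\word{f}$ set $\beta_k:=\rootf(\word{f},k)=(\word{i}\setminus\word{f})_{(k-1)}(\alpha_{i_k})$. Then inserting position $k$ into the complement amounts to multiplying the prefix product by the reflection $t_{\beta_k}$: the product of the word $(\word{i}\setminus\word{f})\cup\{k\}$ equals $t_{\beta_k}\,\dem$. The known flip description (Knutson--Miller, Ceballos--Labb\'e--Stump, Pilaud--Stump) says that $k$ is flipped against the unique $k'\notin\word{f}$ with $\rootf(\word{f},k')=\pm\beta_k$. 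Every position is flippable in the spherical case.

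\emph{Double root $\Rightarrow$ non-reduced subword.} Suppose $k<k'$ lie in $\word{f}$ and $\beta_k=\beta_{k'}=\beta$. Consider the subword $\word{j}=(\word{i}\setminus\word{f})\cup\{k,k'\}$. Its product is computed by inserting two reflections. Inserting $k$ conjugates all later roots by $t_\beta$, so the root at $k'$ in $(\word{i}\setminus\word{f})\cup\{k\}$ becomes $t_\beta(\beta)=-\beta$. Hence the product is $t_{-\beta}t_\beta\,\dem=\dem$. The subword $\word{j}$ has length $\ell(\dem)+2$, so it is a non-reduced expression of $\dem$. One point to check carefully is the order of composition: the prefix convention $\word{j}_{(k)}$ is a left product, so inserting at $k$ gives $\dem\mapsto u\,s_{i_k}u^{-1}\cdot\dem$ with $u$ the prefix. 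Both insertions are then left multiplications, by $t_\beta$ and by $t_{\beta}$ conjugated appropriately, and these cancel.

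\emph{Non-reduced subword $\Rightarrow$ double root.} This is the main obstacle. Let $\word{j}$ be a non-reduced subword with $\pi(\word{j})=\dem$, chosen of minimal length. By the parity of lengths, $\ell(\word{j})\ge \ell(\dem)+2$.

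First, we reduce to length exactly $\ell(\dem)+2$. By the deletion property, $\word{j}$ has two letters whose removal gives a shorter word with the same product. If that shorter word is still non-reduced, this contradicts minimality. So we may assume $\word{j}=\word{r}\cup\{k,k'\}$ with $\word{r}$ reduced for $\dem$.

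Next, we exhibit the double root. Set $\word{f}=\word{i}\setminus\word{r}$, which is a facet containing $k$ and $k'$. By the computation above, the product of $\word{j}$ is $t_{\beta'}t_{\beta}\dem$, where $\beta=\beta_k$ and $\beta'$ is the root at $k'$ after inserting $k$, namely $\beta'=t_\beta(\beta_{k'})$. Equality with $\dem$ forces $t_{\beta'}=t_\beta$, so $t_\beta(\beta_{k'})=\pm\beta$ and hence $\beta_{k'}=\pm\beta$.

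It remains to exclude $\beta_{k'}=-\beta$. This is where the real work lies. We would use the minimality of $\word{j}$ together with the exchange property. If $\beta_{k'}=-\beta$, we should be able to find a different reduced word $\word{r}'$ inside $\word{j}$, obtained by exchanging $k$ with a suitable position of $\word{r}$, relative to which the two extra roots coincide. Alternatively, we would replace $\word{r}$ by $\word{r}\cup\{k\}\setminus\{m\}$, where $m$ is the flip partner of $k$; this changes the facet and the sign of the root at $k'$. The first attempt is to take $m$ as that flip partner and verify directly that in the new facet $\word{f}'$ the roots at $m$ and $k'$ agree.

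Finally, $\word{i}$ being double root free is by definition a statement about $\Delta(\word{i},\dem(\word{i}))$ for arbitrary Coxeter systems. No finiteness is used in either direction, so the argument holds in the stated generality.
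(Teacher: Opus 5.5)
Your first direction is correct: if $k<k'$ in a facet $\word{f}$ carry the same root $\beta$, then $(\word{i}\setminus\word{f})\cup\{k,k'\}$ has product $t_{-\beta}t_{\beta}\dem=\dem$ and length $\ell(\dem)+2$. In the converse direction, the reduction to $\ell(\word{j})=\ell(\dem)+2$ and the conclusion $\rootf(\word{f},k')=\pm\beta$ are also fine; this is the paper's computation $v(\alpha_j)=\pm\alpha_i$. The gap is the last step. You flag it as ``where the real work lies'' but only state intentions for it.

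The goal is also misformulated. For the decomposition $\word{j}=\word{r}\cup\{k,k'\}$ you start from, $\rootf(\word{f},k')=-\beta$ cannot be excluded: for $\word{i}=111$ and $\word{r}=\{2\}$, the facet $\{1,3\}$ has roots $\alpha_1,-\alpha_1$. So you must pass to a different facet. Your heuristic that flipping $k$ against its partner $m$ ``changes the sign of the root at $k'$'' is wrong. The root at $k'$ stays $-\beta$, and it is the new position $m$ that acquires $-\beta$. Whether such a flip yields a double root depends on where $m$ lies relative to $k$ and $k'$, and your proposal never determines this.

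Here is the missing argument. Since $\word{r}$ is reduced, the roots $\rootf(\word{f},p)$ for $p\in\word{r}$ are distinct positive roots. Hence $k$ and $k'$ have the same flip partner $m$, namely the unique $p\in\word{r}$ with $\rootf(\word{f},p)=|\beta|$.

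Next, take any $i\in\word{f}$ with partner $m$. Compare the prefix before $i$ in $\word{r}$ with the prefix before $i$ in the reduced word $\word{r}\cup\{i\}\setminus\{m\}$. Since the root at $i$ in the latter reduced word is positive, this gives $\rootf(\word{f},i)>0$ if $i<m$ and $\rootf(\word{f},i)<0$ if $i>m$. Applied to $k$ and $k'$, this forces $\beta>0$ and $k<m<k'$; the case $\beta<0$ would force $k'<m<k$.

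Now $\word{r}\cup\{k'\}\setminus\{m\}$ is a reduced word for $\dem$. Its prefixes before $k$ and before $m$ coincide with those of $\word{r}$. So in the facet $(\word{f}\setminus\{k'\})\cup\{m\}$, both $k$ and $m$ carry the root $\beta$, which is a genuine double root.

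The paper reaches the same conclusion differently. It notes that $\Delta(\word{j},\dem)$ is a triangle on $\{k,m,k'\}$ and that the two roots of each facet of this triangle agree up to sign. It then uses Lemma 2.6 of Bergeron--Ceballos: the positive greedy facet $\{k,m\}$ has positive roots, so they are equal.
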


\begin{proof}
 If the subword $\word{j}$ has the same Demazure product as $\word{i}$, then each facet of the subword complex of $\word{j}$ extends to a facet of the subword complex of $\word{i}$ by adding letters in the complement $\word{i} \backslash \word{j}$. The root configuration for the former is a subset of the root configuration for the latter. Thus, $\word{i}$ is double root free if and only if each subword with the same Demazure product is double root free. The other condition (about non-reduced subwords), by 
 its definition, also holds for $\word{i}$ if and only if it holds for each subword of $\word{i}$ with the same Demazure product $\dem$.

 We now show that we can reduce the proof to the case of words of length $\ell(\dem) +2$. First, if $\word{i}$ is not double root free, the spherical subword complex $\Delta(\word{i}, \dem(\word{i}))$ has a facet $I$ having two (automatically flippable) positions $a, b$ with the same root in the root configuarion of $I$.  Then the subword $\word{j}$ of $\word{i}$ formed by $\word{i} \backslash I, a, b$ is not double root free.  It has length $\ell(\dem) + 2$, and $\dem(\word{j}) = \dem$. 
 Second, if $\word{i}$ does contain a non-reduced subword of $\dem$, then by the deletion property \cite[Proposition 1.4.7]{bb_coxeter}, we can remove pairs of letters of this subword until we get a subword of length $\ell(\dem) + 2$ still giving a non-reduced expression of $\dem$. Thus, each of the two conditions holds for $\word{i}$ if and only if it holds for each subword of $\word{i}$ of length $\ell(\dem) + 2$ with Demazure product $\dem$, and so it is sufficient to prove the proposition for words of length $\ell(\dem) + 2$.

Assume now that we have a word $\word{j}$ of length $\ell(\dem) + 2$ and with $\dem(\word{j}) = \dem$. 

Then $\word{j}$ has the form $\word{i}(u) i \word{i}(v) j \word{i}(w)$, with $\word{i}(u)\word{i}(v)\word{i}(w)$ being a reduced expression of $\dem$. Then the root configuration for the face of the subword complex which is a complement for this reduced expression is $\{u(\alpha_i), uv(\alpha_j)\}$. We have $\pi(\word{i}(u)i \word{i}(v) j \word{i}(w)) = \dem = \pi(\word{i}(u)\word{i}(v)\word{i}(w))$ if and only if $\pi(i \word{i}(v) j \word{i}(w)) = \pi(\word{i}(v)\word{i}(w))$, if and only if 
$\pi(i \word{i}(v) j) = \pi(\word{i}(v))$, if and only if $\pi(i \word{i}(v)) = \pi(\word{i}(v) j)$, if and only if $s_i (v (\alpha_j)) = v(s_j(\alpha_j))$, if and only if $s_i(v(\alpha_j)) = - v(\alpha_j)$, if and only if $v(\alpha_j) = \pm \alpha_i$, if and only if $uv(\alpha_j) = \pm u\alpha_i$.

Now that we proved that the failure of  first condition is equivalent to the word being ``not double root free, up to sign'', the result would follow if we prove that this implies the failure of the honest double root free condition.
 Assume that we have $\pi\left(\word{i}(u) i \word{i}(v) j \word{i}(w)\right) = uvw$, equivalently,  $v(\alpha_j) = \pm \alpha_i$. Since roots for the letters in the complement to our facet are in bijection with the inversions of $uvw$, 
 there is at most one letter in this complement which has the same root up to sign. Since the positions with letters $i, j$ are flippable, such letter must exist.  

This letter then can be replaced by $i$ and by $j$ by (two different) flips, and since no other flips of the maximal face $\{i, j\}$ are possible while the complex is spherical, the subword complex is a triangle formed by this letter $k$ and the positions of 
$i$ and $j$, i.e. the maximal simplices are $\{i, j\}, \{i, k\}, \{k, j\}$. 
By \cite[Lemma 2.6]{BergeronCeballos} (extending \cite[Lemmas 3.3 and 3.6]{cls} to possibly infinite Coxeter groups),
both roots in the facet formed by the two leftmost of these positions are positive, so the root configuration for  this leftmost (called \emph{positive greedy} in \cite{PilaudStump-EL}) facet 
consists of two copies of the same positive root. This completes the proof.
\end{proof}

\begin{corollary} 
\label{cor:drf_braid_invariant}
Let $(W, S)$ be a Coxeter system. Consider two words
of finite length related by a braid move $\word{i} = \word{i}_1 (ij \ldots) \word{i}_2 \to \word{i}_1 (j i \ldots) \word{i}_2 = \word{i}'$. Then $\word{i}$ is double root free if and only if $\word{i}'$ is double root free.
\end{corollary}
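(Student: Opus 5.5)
The plan is to chain the two results just established. First, the braid move $\word{i} \to \word{i}'$ does not change the Demazure product: $\dem(\word{i}) = \dem(\word{i}')$, since the Demazure product factors through the positive braid monoid $\Br_W^+$. Write $\dem$ for this common element.

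By Proposition~\ref{prop:no_nonreduced_double_root_free}, applied separately to $\word{i}$ and to $\word{i}'$, we have:
\begin{itemize}
\item[] $\word{i}$ is double root free if and only if $\word{i}$ contains no subword which is a non-reduced expression of $\dem$;
\item[] $\word{i}'$ is double root free if and only if $\word{i}'$ contains no subword which is a non-reduced expression of $\dem$.
\end{itemize}
Here we use that $\dem(\word{i}') = \dem$, so both statements concern the same group element.

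Lemma~\ref{lem:braid_invariance_property_2}, applied with $w = \dem$, states exactly that these two conditions on non-reduced subwords are equivalent for words related by a braid move. Combining the three equivalences gives the claim.

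I expect no real obstacle here. The only point to check is that the element $w$ in Lemma~\ref{lem:braid_invariance_property_2} is a fixed element of $W$, and that it agrees with the Demazure product of both words. This is why the invariance of $\dem$ under braid moves is recorded first.
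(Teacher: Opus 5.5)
Your proposal is correct and matches the paper's intended argument. The paper states this corollary without a separate proof because it follows directly from Lemma~\ref{lem:braid_invariance_property_2} (applied with $w=\dem$) and Proposition~\ref{prop:no_nonreduced_double_root_free}, using that the Demazure product is invariant under braid moves — exactly as you chain them.
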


We now have all ingredients to prove the main result of this section.

\begin{theorem}\label{thm:braid-is-torus}
Assume that $\delta(\word{i}) = w_0$. The following are equivalent: 
\begin{itemize}
\item[(a)] $X(\word{i})$ is an algebraic torus;
\item[(b)] $\mathbb{C}[X(\word{i})]$ is a cluster algebra with no mutable vertices;
\item[(c)] For every subword $\word{j}$ of $\word{i}$ with $\delta(\word{j})= w_0$,  $X(\word{j})$ is an algebraic torus;
\item[(d)] For every subword $\word{j}$ of $\word{i}$ with $\delta(\word{j})= w_0$,  $\mathbb{C}[X(\word{j})]$ is a cluster algebra with no mutable vertices;
\item[(e)] $\brick(\word{i})$ is toric with respect to the extended action of the cluster dilation group $\dil{\word{i}}$;
\item[(f)] $(\brick(\word{i}), \brick(\word{i}) \backslash X(\word{i}))$ is a toric pair;
\item[(g)] $\word{i}$ does not contain any subword $\word{j}$ such that $\pi(\word{j}) = w_0$ and $\word{j}$ is not reduced;
\item[(h)] $\word{i}$ is double root free;
\item[(i)] For the extended action of the cluster dilation group $\dil{\word{i}}$ on $\brick(\word{i})$, for any $\dil{\word{i}}$-equivariant projective embedding of $\brick(\word{i})$, the polar dual of the moment polytope realizes the subword complex $\Delta(\word{i}, w_0)$.
\end{itemize}
\end{theorem}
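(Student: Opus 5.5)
I will organize the proof around the combinatorial core (a)$\Leftrightarrow$(b)$\Leftrightarrow$(g)$\Leftrightarrow$(h), then add the geometric conditions (c)--(f) and finally the polytopal condition (i).

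\textbf{Combinatorial core.} Since the cluster dilation group is a torus of rank equal to the number of frozen variables (really full rank) and $\dim X(\word{i})$ is the total number of cluster variables, $X(\word{i})$ is a torus exactly when there are no mutable vertices. Conversely, if there are no mutable vertices, a locally acyclic cluster algebra is the Laurent polynomial ring in the frozen variables. This gives (a)$\Leftrightarrow$(b). Proposition~\ref{prop:torus_no_nonreduced} gives (b)$\Leftrightarrow$(g), and Proposition~\ref{prop:no_nonreduced_double_root_free} gives (g)$\Leftrightarrow$(h); note that for $\dem(\word{i})=w_0$, subwords with $\pi=w_0$ are exactly the expressions of $\dem$. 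Proposition~\ref{prop:no mutables in subwords} gives (b)$\Rightarrow$(d), applying (a)$\Leftrightarrow$(b) to each subword yields (c)$\Leftrightarrow$(d), and taking $\word{j}=\word{i}$ gives (d)$\Rightarrow$(b).

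\textbf{Geometric conditions.} For (a)$\Rightarrow$(e): $\dil{\word{i}}$ acts on $\brick(\word{i})$ by Theorem~\ref{thm:action-extends}, faithfully because it is faithful on the dense open $X(\word{i})$. Its rank equals $\dim X(\word{i})=\dim\brick(\word{i})$, and $X(\word{i})$ is a dense orbit, so $\brick(\word{i})$, being normal since it is smooth, is toric. For (e)$\Rightarrow$(a): if $\brick(\word{i})$ is $\dil{\word{i}}$-toric, then $\rank\dil{\word{i}}=\dim$, so the number of frozen variables equals the number of all variables; there are no mutable vertices, which is (b). For (e)$\Rightarrow$(f): by (c) and Corollary~\ref{cor:subwords_tori_orbits}, the orbits are the strata $X(\word{j})$, so the boundary $\brick(\word{i})\setminus X(\word{i})$ is the union of the invariant divisors, i.e.\ the toric boundary. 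For (f)$\Rightarrow$(a): the complement of the toric boundary is the open torus.

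\textbf{Polytopal condition.} For (e)$\Rightarrow$(i), I will use the standard fact that for a smooth projective toric variety with an equivariant ample embedding, the moment polytope's face lattice is reverse to the orbit closure poset. By Corollary~\ref{cor:subwords_tori_orbits} and Theorem~\ref{thm:escobar_stratification}, the orbit poset is the face poset of the dual subword complex, so the polar dual of the moment polytope realizes $\Delta(\word{i},w_0)$. For (i)$\Rightarrow$(a), which I expect to be the main obstacle, I will count vertices and dimension. If the polar dual realizes $\Delta(\word{i},w_0)$, which has dimension $r-\ell(w_0)-1$, then the moment polytope has dimension $r-\ell(w_0)=\dim\brick(\word{i})$. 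Its dimension equals the rank of the effectively acting torus, which is at most $\rank\dil{\word{i}}=d_{\word{i}}$, so $d_{\word{i}}=\dim$ and there are no mutable vertices. The step needing care is justifying that the moment polytope's dimension equals the rank of $\dil{\word{i}}$ acting faithfully (a standard fact for faithful actions on irreducible varieties), and that a moment polytope exists at all, using the linearization discussed before Corollary~\ref{cor:proj_moment_to_brick_polytopes}.
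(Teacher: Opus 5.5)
Your proof is correct. For (a)--(h), for (e)$\Leftrightarrow$(b), for (f)$\Rightarrow$(a) and for the direction into (i), it is the paper's proof: the same propositions, the same comparison of $\rank\,\dil{\word{i}}$ with $\dim\brick(\word{i})$, and the same use of Corollary~\ref{cor:subwords_tori_orbits} with Theorem~\ref{thm:escobar_stratification}.

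\textbf{The different step: (i)$\Rightarrow$(a).} The paper argues contrapositively from (g). By Lemma~\ref{lem:fixed_points}, a non-reduced subword with $\pi=w_0$ gives more fixed points than there are reduced subwords. The paper concludes from this that the moment polytope has more vertices than $\Delta(\word{i},w_0)$ has facets. You count dimensions instead. If the polar dual realizes the $(r-\ell(w_0)-1)$-sphere $\Delta(\word{i},w_0)$, the moment polytope has dimension $r-\ell(w_0)=\dim\brick(\word{i})=n+m$, with $n$ mutable and $m$ frozen variables. But the polytope lies in the dual Lie algebra of the compact torus of $\dil{\word{i}}$, which has dimension $m$, so $n=0$. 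Only this trivial inequality is needed, not the equality with the rank of the effectively acting torus that you flagged as requiring care.

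Your route is also the more robust one. The vertex count implicitly needs distinct fixed points to map to distinct vertices, which is only automatic for toric actions. For $\word{i}=111$ there are four fixed points and $\Delta(111,s_1)$ has three facets. Yet the moment polytope of the rank-one torus $\dil{111}$ is a segment with two vertices, so ``more vertices'' is false here, even though (i) does fail. The paper's route, once repaired, would give (i)$\Rightarrow$(g) directly through fixed points; yours goes through (b) and the equality $\dim X(\word{i})=n+m$.

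\textbf{Two small slips.}

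First, your sentence for (a)$\Rightarrow$(b) does not prove it. $\rank\,\dil{\word{i}}$ is defined from the cluster structure and says nothing about whether the variety happens to be a torus. You need an extra input, for example that the units of $\C[X(\word{i})]$ are, up to scalars, Laurent monomials in the frozen variables. Then a torus of dimension $n+m$ forces $n=0$. The paper gets this from the reference it cites for (a)$\Leftrightarrow$(b). This direction is not optional: you use it in (c)$\Rightarrow$(d) and to connect (f) back to the other conditions.

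Second, for a projective toric variety the face lattice of the moment polytope is isomorphic to the poset of orbit closures, not its reverse. The reversal happens for the normal fan, equivalently for the polar dual. Here the orbit closures are the $\brick(\word{j})$ with $\dem(\word{j})=w_0$, ordered by inclusion. That poset is the face poset of $\Delta(\word{i},w_0)$ reversed via $\word{j}\mapsto\word{i}\setminus\word{j}$. This is exactly why it is the polar dual that realizes $\Delta(\word{i},w_0)$. Your conclusion is right, but the fact as you stated it has the direction flipped.
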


\begin{proof}
Equivalences (a) $\Longleftrightarrow$ (b) and (c) $\Longleftrightarrow$ (d) are straightforward, see \cite[Corollary 2.2]{GKSB} for an explicit proof. Equivalence (b) $\Longleftrightarrow$ (d) follows from Proposition~\ref{prop:no mutables in subwords}.
Equivalence (a) $\Longleftrightarrow$ (g) is Proposition~\ref{prop:torus_no_nonreduced}. Equivalence (g) $\Longleftrightarrow$ (h) is Proposition~\ref{prop:no_nonreduced_double_root_free}. We have now proven the mutual equivalence of (a), (b), (c), (d), (g), and (h).

 We have $\dim \brick(\word{i}) = \dim X(\word{i})$, which is the number of all variables -- mutable and frozen,-- in each seed of the cluster structure on $\C[X(\word{i})]$, while the rank of  $\dil{\word{i}}$ equals the number of frozen variables. If $\brick(\word{i})$ is toric with respect to the extended action of $\dil{\word{i}}$, then  $\dim \brick(\word{i}) = \rank  \, \dil{\word{i}}$, which implies that all the variables are frozen. This shows the implication (e) $\Rightarrow$ (b). The implication (a) $\Rightarrow$ (e) is Corollary~\ref{cor:torus_implies_toric}. Thus, we have shown (a) $\Longleftrightarrow$ (e).

By definition of toric pairs, (f) is equivalent to $\brick({\word{i}})$ being toric for an action of some torus $\mathsf{T}$ and $X(\word{i})$ being a dense open orbit isomorphic to $\mathsf{T}$. So we have (f) $\Rightarrow$ (a). Since we know that (a) $\Longleftrightarrow$ (e) and when these conditions are satisfied,  $X(\word{i})$ is isomorphic to $\dil{\word{i}}$, we also get the  implication (a) $\Rightarrow$ (f).

The implication (c) $\Rightarrow$ (i) follows from Corollary~\ref{cor:subwords_tori_orbits} combined with Theorem~\ref{thm:escobar_stratification}. If (g) is not satisfied, then by Lemma~\ref{lem:fixed_points}, the moment polytope has strictly more vertices than the dual subword complex, so (i)  does not hold. This shows the implication (i) $\Rightarrow$ (g). Combined with an equivalence (c) $\Longleftrightarrow$ (g) we established three paragraphs ago, this completes the proof.
\end{proof}

\begin{remark}
\label{rem:brick_toric_alternative_proof}
The equivalence between (a)$\Longleftrightarrow$ (b), on one side, and (f), on the other side, can be alternatively deduced from Proposition~\ref{prop:brick_cluster_pair} combined with \cite{EFM}. Indeed, Proposition~\ref{prop:brick_cluster_pair} shows that the pair $(\brick(\word{i}), \brick(\word{i}) \backslash X(\word{i}))$ is a cluster type log-Calaby-Yau pair. The braid variety $X(\word{i})$ can be divisorially covered by a single almost torus if and only if it is a torus: the ``if'' direction is immediate from the definition of almost tori, and the ``only if direction'' follows e.g. from the fact a cluster algebra with $m$ variables in each seed which has at least one mutable variable cannot be isomorphic to the ring of regular functions of a big open subset of the algebraic torus of rank $m$. 

The desired equivalence then follows directly from \cite[Theorem 1.10]{EFM}. 
We refer the reader to \cite{EFM} for all the undefined notions.
\end{remark}

\begin{corollary}
\label{cor:polytope_projects_onto_brick}
Let $\word{i}$ be a double root free word with Demazure product $w_0$. There exists a polytopal realization of the subword complex of $(\word{i}, w_0)$ whose polar dual projects onto the brick polytope of  $\word{i}$. 
\end{corollary}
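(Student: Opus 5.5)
The plan is to combine Theorem~\ref{thm:braid-is-torus} with Corollary~\ref{cor:proj_moment_to_brick_polytopes}. Since $\word{i}$ is double root free, the implication (h) $\Rightarrow$ (e) of Theorem~\ref{thm:braid-is-torus} shows that $\brick(\word{i})$ is a projective toric variety for the extended action of $\dil{\word{i}}$. Choose a $T$-linearizable ample line bundle $L$ giving Escobar's embedding. Then choose a power $L^{\otimes k}$ that is $\dil{\word{i}}$-linearizable compatibly with the $T$-action; this exists by the argument preceding Corollary~\ref{cor:proj_moment_to_brick_polytopes}. This yields a $\dil{\word{i}}$-equivariant projective embedding, and its moment polytope $P$ is a lattice polytope whose normal fan is the fan of the toric variety $\brick(\word{i})$.

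Next, by the implication (h) $\Rightarrow$ (i) of Theorem~\ref{thm:braid-is-torus}, the polar dual $P^{\circ}$ (taken after translating an interior point of $P$ to the origin) realizes $\Delta(\word{i}, w_0)$. Concretely, the orbit stratification from Corollary~\ref{cor:subwords_tori_orbits} and Theorem~\ref{thm:escobar_stratification} identifies the face lattice of $P$ with the dual subword complex. Since $\brick(\word{i})$ is smooth, $P$ is simple and $P^{\circ}$ is simplicial, so $P^{\circ}$ is a polytopal realization of $\Delta(\word{i},w_0)$. Its polar dual is $P$ itself, up to translation.

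Finally, Corollary~\ref{cor:proj_moment_to_brick_polytopes} says that $P$ projects onto $k\cdot B(\word{i},w_0)$. The projection is the linear map $\charlat(\dil{\word{i}})\otimes\mathbb{R}\to\charlat(T)\otimes\mathbb{R}$ dual to $a\colon T\to\dil{\word{i}}$. It sends the moment image of each $\dil{\word{i}}$-fixed point to the $T$-moment image of the same point: by Lemma~\ref{lem:fixed_points}, these fixed points are exactly the $T$-fixed points, and Escobar identifies their $T$-images with the brick vectors. Precomposing with the dilation by $1/k$, which preserves polytopality and combinatorics, gives a polytope $\tfrac1k P$ that projects onto $B(\word{i},w_0)$. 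Hence $(\tfrac1k P)^{\circ}$ is the required realization.

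The main delicate step is making sure the translation used to take polar duals and the rescaling do not break the projection statement. Projection onto a polytope is preserved if we translate $B(\word{i},w_0)$ compatibly, so we will interpret ``projects onto'' up to translation and dilation. The only other point to check is the compatibility of the two linearizations, and this is exactly what the chosen power $L^{\otimes k}$ guarantees.
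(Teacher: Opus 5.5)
Your proposal is correct and is the argument the paper intends. The paper states this corollary without a separate proof, as the combination of the implication (h)$\Rightarrow$(i) of Theorem~\ref{thm:braid-is-torus} with Corollary~\ref{cor:proj_moment_to_brick_polytopes}, as the introduction notes. Your treatment of the translation and the dilation by $k$ matches the paper's own ``up to dilation'' level of precision.
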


We finish this section with an application to the splicing maps of \cite{splicing2}.

\begin{remark}
    Assume that $\word{i}$ is a double root free word, so that $X(\word{i})$ is a torus. We also assume that $\dem(\word{i}) = w_0$. Consider an arbitrary decomposition of $\word{i}$  into a concatenation of two subwords $\word{i} = \word{i}_1\word{i}_2$. Choose some element $w \in W$ such that
    \[
    \dem(\word{i}_2\word{i}(w)) = w_0, \qquad \dem(\word{i}(w^{-1}w_0)\word{i}_1) = w_0.
    \]
    In this case, we have a \emph{splicing map}
    \[
    X(\word{i}(w^{-1}w_0)\word{i}_1) \times X(\word{i}_2\word{i}(w)) \to X(\word{i})
    \]
    whose image is a dense open set, cf. \cite[Theorem 1.1]{splicing2}. We claim that, in this case, the splicing map is an isomorphism. This confirms \cite[Conjecture 5.6]{splicing2} for such $\word{i}, \word{i}_1, \word{i}_2, w$.
    
    By \cite[Lemma 2.1]{splicing2} it is enough to show that both $X(\word{i}(w^{-1}w_0)\word{i}_1)$ and $X(\word{i}_2\word{i}(w))$ are tori. Applying cyclic rotations, we have that $X(\word{i}(w^{-1}w_0)\word{i}_1) \cong X(\word{i}_1\word{i}(w_0w^{-1}))$. Now, since $\dem(\word{i}_2\word{i}(w)) = w_0$, we must have $\dem(\word{i}_2) \geq w_0w^{-1}$, so that $\word{i}_2$ contains a reduced subword $\word{j}$ for $w_0w^{-1}$. We may apply braid moves in the isomorphism $X(\word{i}(w^{-1}w_0)\word{i}_1) \cong X(\word{i}_1\word{i}(w_0w^{-1}))$ to have that $X(\word{i}(w^{-1}w_0)\word{i}_1)$ is in fact isomorphic to $X(\word{i}_1\word{j})$. But $\word{i}_1\word{j}$ is a subword of $\word{i}_1\word{i}_2$, so $X(\word{i}_1\word{j})$ is a torus by Proposition \ref{prop:no mutables in subwords}. The proof that $X(\word{i}_2\word{i}(w))$ is a torus is similar. 
\end{remark}

\section{Applications to (projected) Richardson varieties}\label{sec:richardsons}

In this section, we explore applications of Theorems \ref{thm:action-extends} and \ref{thm:braid-is-torus} to the case of (closed) Richardson varieties in the flag variety. 

\subsection{Torus actions on Richardson varieties} We fix $v, w \in W$. By definition, the \emph{open Richardson variety} is the following locally closed subvariety of the flag variety $\group/\borel_{+}$:
\begin{equation}\label{eq:open-richardson}
\orich{v,w} = \{\borel \in \group/\borel_{+} \mid \borel_{+} \rel{w} \borel \rel{v^{-1}\wo} \borel_{-}\},
\end{equation}
while the \emph{closed Richardson variety} is the closure of $\orich{v,w}$ inside the flag variety
\[
\rich{v,w} := \overline{\orich{v,w}} \subseteq \group/\borel_+.
\]
The open Richardson variety $\orich{v,w}$ is nonempty if and only if $v \leq w$ in the (strong) Bruhat order. By definition, the same is true for the closed Richardson variety $\rich{v,w}$. In the case $v \leq w$, $\orich{v,w}$ is a smooth affine variety of dimension $\ell(w) - \ell(v)$. The variety $\rich{v,w}$ is a projective variety of dimension $\ell(w) - \ell(v)$, but it is no longer necessarily smooth.

For $q$ a prime power, we may consider the $\F_q$-points of the open Richardson variety, $\orich{v,w}(\F_q)$. By results of Deodhar, \cite{Deodhar}, the point-count is a polynomial in $q$ with integer coefficients, known as the $R$-polynomial \cite{KazhdanLusztig}
\[
\rpoly{v,w}(q) := \#\orich{v,w}(\F_q) \in \Z[q].
\]
In fact, the $R$-polynomial always has the form
\[
\rpoly{v,w}(q) = q^{\ell(w) - \ell(v)} - d_{v,w}q^{\ell(w) - \ell(v) - 1} + \text{lower order terms,}
\]
where $d_{v,w} \in \Z_{\geq 0}$ is the so-called \emph{Kazhdan-Lusztig $d$-invariant}, which is known to be a combinatorial invariant of the Bruhat interval $[v, w]$, cf. \cite{BarkleyGaetzLam, patimo}.

\begin{proposition}\label{prop:KL-d-invariant}
    Assume $v \leq w \in W$. The open Richardson variety $\orich{v,w}$ is isomorphic to the braid variety $X(\word{i}(w)\word{i}(v^{-1}w_0))$. Thus, it admits a cluster structure. The number of frozen variables in this cluster structure coincides with the Kazhdan-Lusztig $d$-invariant $d_{v,w}$. 
\end{proposition}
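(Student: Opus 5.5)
The proof has three parts: identify $\orich{v,w}$ with a braid variety, transport the cluster structure, and match the number of frozen variables with $d_{v,w}$ via point counts.

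\emph{Step 1: the isomorphism.} Put $\word{i} = \word{i}(w)\word{i}(v^{-1}w_0)$. Since $v \leq w$, the Demazure product is $\dem(\word{i}) = w * v^{-1}w_0 = w_0$; indeed $w$ contains a reduced subword for $v$, hence $\word{i}$ contains a reduced word for $v\cdot v^{-1}w_0 = w_0$. A point of $X(\word{i})$ is a chain of flags
\[
\borel_+ = \flag{0} \to \cdots \to \flag{\ell(w)} \to \cdots \to \flag{r} = w_0\borel_+ = \borel_-.
\]
Because $\word{i}(w)$ and $\word{i}(v^{-1}w_0)$ are reduced, the intermediate flags of each segment are uniquely determined by its endpoints. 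So $X(\word{i})$ is identified with the set of flags $\borel = \flag{\ell(w)}$ satisfying $\borel_+ \rel{w} \borel \rel{v^{-1}w_0} \borel_-$, which is exactly \eqref{eq:open-richardson}. This is \cite[Theorem 3.14]{CGGLSS}, and I would cite it. The cluster structure then comes from Theorem~\ref{thm:cluster-structure}.

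\emph{Step 2: frozen count equals $d_{v,w}$.} Write $n = \ell(w)-\ell(v) = \dim X(\word{i})$. I would use the point count. The cluster structure is locally acyclic and really full rank, so $X(\word{i})$ has a cluster torus $(\C^\times)^n$ as an open subset, and its complement is controlled by the mutable directions. The cleanest route is a Deodhar-type decomposition of $X(\word{i})$ indexed by distinguished subexpressions, equivalently by the weave stratification of \cite{CGGLSS}. The open stratum is $(\C^\times)^{n}$, and each codimension-one stratum has the form $(\C^\times)^{a}\times\C^{b}$ with $a+b = n-1$, $b \geq 1$. Counting $\F_q$-points gives $(q-1)^n + (\text{number of } \C^{n-1}\text{-type corrections})$, and the coefficient of $q^{n-1}$ is $-n + m$, where $m$ is the number of mutable variables: each mutable direction contributes one stratum of the form $\C\times(\C^\times)^{n-1}$ up to lower-order terms. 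Hence $d_{v,w} = n - m$, which is the number of frozen variables.

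\emph{Step 3: the main obstacle.} The hardest part is making the link "each mutable variable contributes $+q^{n-1}$" rigorous. As an alternative I would argue via cohomology: since $\orich{v,w}$ is a smooth affine variety with polynomial point count and pure-type mixed Hodge structure (the curious Lefschetz property), the coefficient of $q^{n-1}$ is $-\dim H^1(\orich{v,w})$, up to Poincaré duality on compactly supported cohomology. For a really full rank, locally acyclic cluster variety, $H^1$ has rank equal to the number of frozen variables, since the frozen variables are exactly the invertible regular functions modulo scalars and $H^1$ is spanned by their $d\log$ forms. Justifying that $H^1$ is generated by units is the delicate point; if it resists, I would instead cite \cite{BarkleyGaetzLam, patimo}, which identify $d_{v,w}$ combinatorially, together with \cite[Lemma 7.2]{CGGLSS}, which counts frozen vertices as Demazure-frozen trivalent vertices of a weave, and match the two counts.
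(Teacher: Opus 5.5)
Your Step 1 is correct and matches the paper, which simply cites \cite[Theorem 3.14]{CGGLSS}. Your sketch of why it holds is also right: the intermediate flags of a reduced segment are determined by its endpoints, and $\dem(\word{i}) = w_0$ because $\word{i}$ contains a reduced word for $v\cdot v^{-1}w_0$.

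The gap is in Step 2, which is the whole content of the claim about frozen variables. The paper does not prove this either; it cites \cite[Section 3.4]{hypercubes}. Your point count does reduce the claim correctly to a single statement. Let $T\cong(\C^\times)^n$ be the open Deodhar piece. Every other piece has the form $(\C^\times)^a\times\C^b$ with $a+2b=n$ and codimension $b$, so
\[
\rpoly{v,w}(q) = (q-1)^n + c\,q^{n-1} + O(q^{n-2}), \qquad d_{v,w} = n - c,
\]
where $c$ is the number of codimension-one pieces, equivalently the number of irreducible components of $\orich{v,w}\setminus T$. There is a slip here: a codimension-one piece is $(\C^\times)^{n-2}\times\C$. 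The piece $\C\times(\C^\times)^{n-1}$ that you name is $n$-dimensional and would contribute a second $q^n$.

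What you never establish is $c = \#\{\text{mutable variables}\}$, and this does not follow formally. You would need:
\begin{enumerate}
\item that $T$ is a cluster torus for this particular cluster structure;
\item that its complement is $\bigcup_{x_i\ \text{mutable}}\{x_i=0\}$ (frozen variables vanish nowhere);
\item that these loci are nonempty, irreducible and pairwise distinct.
\end{enumerate}
Such an input is available, for instance from the Deodhar-geometric construction of seeds in \cite{GLSBS, GLSB}, transported to the structure of \cite{CGGLSS} via \cite{CGGSSBS}. But it must be invoked. The sentence ``each mutable direction contributes one stratum'' is the theorem itself, not a step toward it.

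Your fallbacks do not close the gap either. In the cohomological route, Poincar\'e duality and weights give the coefficient of $q^{n-1}$ as $-\dim \mathrm{Gr}^W_2 H^1 + \dim \mathrm{Gr}^W_2 H^2$, not $-\dim H^1$. So you would need three unproved facts: that units are frozen monomials times scalars, that $H^1$ is spanned by their $d\log$ forms, and that $\mathrm{Gr}^W_2 H^2(\orich{v,w}) = 0$. As for the last fallback, \cite{BarkleyGaetzLam, patimo} establish combinatorial invariance of $d_{v,w}$, while \cite[Lemma 7.2]{CGGLSS} only decides whether an individual trivalent vertex of a weave is frozen. Neither source counts the other's quantity, so ``matching the two counts'' is again exactly the statement to be proved.
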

\begin{proof}
The first statement is \cite[Theorem 3.14]{CGGLSS}, and the second follows from the discussion in \cite[Section 3.4]{hypercubes}.
\end{proof}

Thanks to Proposition \ref{prop:KL-d-invariant}, the open Richardson variety $\orich{v,w}$ admits an action of a torus of rank $d_{v,w}$, that we denote $\dil{v,w}:= \dil{\word{i}(w)\word{i}(v^{-1}w_0)}$. Note that it also admits an action of the maximal torus $T = \borel_{+}\cap \borel_{-}$, and the action of $T$ extends to the closed Richardson variety $\rich{v,w}$. Moreover, the action of $T$ is by cluster dilations, so we have a map $T \to \dil{v,w}$. However, the $d$-invariant can easily exceed the rank of $T$ \cite{hypercubes}, so this map is not in general surjective. It is natural to ask whether the action of $\dil{v,w}$ on $\orich{v,w}$ extends to an action on $\rich{v,w}$, cf. \cite[Question 1.6]{GKSB}. The following result answers this in the affirmative. 

\begin{theorem}\label{thm:action-extends-richardsons}
    The faithful $\dil{v,w}$-action on the open Richardson variety $\orich{v,w}$ extends to an action on the closed Richardson variety $\rich{v,w}$. 
\end{theorem}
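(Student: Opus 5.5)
We will realize $\rich{v,w}$ as the image of a brick variety under a proper birational map, and then push the action of Theorem \ref{thm:action-extends} forward along this map.

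\emph{Step 1: the resolution.} Set $\word{i} = \word{i}(w)\word{i}(v^{-1}w_0)$, with $\ell(w) = a$, so $\dem(\word{i}) = w_0$ and $X(\word{i}) \cong \orich{v,w}$ by Proposition \ref{prop:KL-d-invariant}. In terms of flags, this isomorphism sends $(\flag{0},\dots,\flag{r}) \mapsto \flag{a}$, using $\flag{0} = \borel_+$ and $\flag{r} = w_0\borel_+ = \borel_-$. The same formula defines a morphism $p:\brick(\word{i}) \to \group/\borel_+$. Since $\brick(\word{i})$ is projective and irreducible and $p$ restricts to the isomorphism on the dense open set $X(\word{i})$, the image of $p$ is $\rich{v,w}$ and $p$ is proper and birational onto it. (One can check directly that $\borel_+ \rel{\leq w} \flag{a} \rel{\leq v^{-1}w_0} \borel_-$.)

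\emph{Step 2: pushing forward the action.} The action of $\dil{\word{i}} = \dil{v,w}$ on $\brick(\word{i})$ from Theorem \ref{thm:action-extends} should descend along $p$. Since $\rich{v,w}$ is normal (closed Richardson varieties are normal, e.g.\ by Frobenius splitting), Zariski's main theorem gives $p_*\mathcal{O}_{\brick(\word{i})} = \mathcal{O}_{\rich{v,w}}$, and the fibers of $p$ are connected. For $\xi \in \dil{\word{i}}$, the composition $p\circ\xi$ is constant on each fiber of $p$. To see this, note that the fibers are connected projective varieties, while $\dil{\word{i}}$ is connected, so $\xi$ is homotopic to the identity and $p\circ\xi$ contracts every curve contracted by $p$. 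The cleaner formulation is rigidity for contractions: a morphism from $\brick(\word{i})\times\dil{\word{i}}$ that contracts the fibers of $p\times\mathrm{id}$ factors through $\rich{v,w}\times\dil{\word{i}}$, because $(p\times \mathrm{id})_*\mathcal{O} = \mathcal{O}$ (Blanchard's lemma). Applying this to $\mathrm{act}\circ(\ldots)$ composed with $p$ gives a morphism $\dil{\word{i}}\times\rich{v,w}\to\rich{v,w}$. It agrees with the original action on $\orich{v,w}$, so by density the action axioms hold, and faithfulness is inherited from the open set.

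\emph{Main obstacle.} The delicate point is this descent. Blanchard's lemma requires the acting group to be connected, which holds since $\dil{\word{i}}$ is a torus, and requires $p_*\mathcal{O} = \mathcal{O}$, i.e.\ normality of $\rich{v,w}$. If one wants to avoid citing normality, the alternative is to show directly that $p$ is $\dil{\word{i}}$-equivariant for a suitable action on an open cover of $\group/\borel_+$. One would intersect $\rich{v,w}$ with the translated big cells $u\unipotent_-\borel_+/\borel_+$ for $u\in[v,w]$, and check that the coordinates there pull back to $\dil{\word{i}}$-homogeneous functions on the charts $C_{\word{i}}(\phi)$. I would attempt the Blanchard route first.
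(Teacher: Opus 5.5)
Your proposal is correct and follows essentially the same route as the paper. Both resolve $\rich{v,w}$ by $\brick(\word{i}(w)\word{i}(v^{-1}w_0))$ via the map to the flag $\flag{\ell(w)}$ and descend the $\dil{\word{i}}$-action along this map by Blanchard's lemma in Brion's formulation; the only difference is that the paper gets $\varpi_*\mathcal{O}_{\brick(\word{i})} = \mathcal{O}_{\rich{v,w}}$ by citing \cite[Theorem A.3]{KLS_projections}, whereas you derive it from normality of closed Richardson varieties and Zariski's main theorem, which is equally valid.
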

\begin{proof}
We consider the brick variety $\brick(\word{i}(w)\word{i}(v^{-1}w_0))$. This comes with a map 

\[
\varpi: \brick(\word{i}(w)\word{i}(v^{-1}w_0)) \to \group/\borel_{+}
\]
sending a tuple $(\flag{0}, \dots, \flag{\ell(w) + \ell(v^{-1}w_0)})$ to the flag $\flag{\ell(w)}$. According  to \cite[Theorem A.1]{KLS_projections}, the image of the map $\varpi$ is $\rich{v,w}$, the map $\varpi: \brick(\word{i}(w)\word{i}(v^{-1}w_0)) \to \rich{v,w}$ is birational, and it induces an isomorphism $\varpi: X(\word{i}(w)\word{i}(v^{-1}w_0)) \to \orich{v,w}$. Moreover, by \cite[Theorem A.3]{KLS_projections}, $\varpi^{\sharp}: \mathcal{O}_{\rich{v,w}} \to \varpi_{\ast}(\mathcal{O}_{\brick(\word{i}(w)\word{i}(v^{-1}w_0))})$ is an isomorphism \footnote{Such or similar desingularizations of closed Richardson varieties via brick varieties also appeared in \cite{Brion_flags, Balan, escobar}. We chose to cite \cite{KLS_projections} in the proof as the statements there are formulated in the most convenient form for our purposes.}. So we are in position to apply Blanchard's lemma \cite{Blanchard} as formulated by Brion in \cite[Theorem 7.2.1]{Brion_structure} to conclude that there is a $\dil{\word{i}(w)\word{i}(v^{-1}w_0)}$-action on $\rich{v,w}$ making the map $\varpi$ equivariant. Since the restriction $\varpi: X(\word{i}(w)\word{i}(v^{-1}w_0)) \to \orich{v,w}$ is an isomorphism, this is the desired action.
\end{proof}

We can explicitly describe the fixed points of this action.

\begin{proposition}\label{prop:fixed-points-richardsons}
For each $v \leq v' \leq w' \leq w$, the set $\rich{v',w'} \subseteq \rich{v,w}$ is invariant under the $\dil{v,w}$-action. Moreover,
\[
\rich{v,w}^{\dil{v,w}} = \rich{v,w}^{T} = \{v'\borel_{+} | v' \in [v,w]\}.\]
\end{proposition}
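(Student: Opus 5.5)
Set $\word{i} := \word{i}(w)\word{i}(v^{-1}w_0)$ and let $\varpi: \brick(\word{i}) \to \rich{v,w}$ be the $\dil{v,w}$-equivariant birational map from the proof of Theorem \ref{thm:action-extends-richardsons}. I would prove invariance of $\rich{v',w'}$ by finding a $\dil{\word{i}}$-stable subvariety of $\brick(\word{i})$ that maps onto it. Choose reduced words so that $\word{i}(w)$ contains a reduced subword $\word{j}_1$ for $w'$, and $\word{i}(v^{-1}w_0)$ contains a reduced subword $\word{j}_2$ for $v'^{-1}w_0$. This is possible by the subword property of Bruhat order: $v' \leq v$ in the reversed sense gives $v'^{-1}w_0 \geq v^{-1}w_0$, so one may instead need to pass to the left–right symmetric statement $w_0 v' \leq w_0 v$... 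In any case the subword property supplies such subwords. Set $\word{j} = \word{j}_1\word{j}_2$. Then $\dem(\word{j}) = w_0$ because $w' * v'^{-1}w_0 \geq v'v'^{-1}w_0 = w_0$.

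By Corollary \ref{cor:bricks_subwrods_stable}, the subvariety $\brick(\word{j}) \subseteq \brick(\word{i})$ is $\dil{\word{i}}$-stable. Its open stratum $X(\word{j})$ consists of tuples in which the flags at the positions removed from $\word{i}$ are repeated. So $\varpi$ restricted to $\brick(\word{j})$ is the analogous map $\brick(\word{j}) \to \group/\borel_+$, which sends $X(\word{j})$ isomorphically onto $\orich{v',w'}$ by \cite[Theorem A.1]{KLS_projections}. Since $\brick(\word{j})$ is projective and irreducible, $\varpi(\brick(\word{j})) = \rich{v',w'}$. Equivariance of $\varpi$ then makes $\rich{v',w'}$ invariant. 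I expect the main technical point to be this identification, namely that the restriction of $\varpi$ to the substratum really is the KLS projection for the subword. It should follow directly from the definitions, because $\flag{\ell(w)}$ is the flag after the $\word{j}_1$ part.

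For the fixed points, the action of $T$ factors through $\dil{v,w}$, so $\rich{v,w}^{\dil{v,w}} \subseteq \rich{v,w}^{T}$. The $T$-fixed points of $\group/\borel_+$ are the points $u\borel_+$. Such a point lies in $\orich{v',w'}$ exactly when $v' = w' = u$, so $\rich{v,w}^T = \{u\borel_+ \mid u\in[v,w]\}$. For the reverse inclusion, apply the invariance statement to $v' = w' = u$. The variety $\rich{u,u}$ is the single point $u\borel_+$, and it is $\dil{v,w}$-invariant, hence fixed. Alternatively, each such point is the image of a $\dil{\word{i}}$-fixed point of $\brick(\word{i})$ as described in Lemma \ref{lem:fixed_points}, using a subword with $\pi = w_0$ whose first part multiplies to $u$.
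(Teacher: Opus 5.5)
Your proposal is correct and follows the paper's proof essentially step for step. It uses the same subword $\word{j}$ built from $w'$ and $v'^{-1}w_0$, Corollary \ref{cor:bricks_subwrods_stable} plus equivariance of $\varpi$ for the invariance of $\rich{v',w'}$, the reduction to $T$-fixed points, and $\rich{u,u}=\{u\borel_+\}$ for the reverse inclusion; you also check $\dem(\word{j})=w_0$ and $\varpi(\brick(\word{j}))=\rich{v',w'}$, which the paper leaves as ``by construction''. One small fix: the inequality you need is $v'^{-1}w_0 \leq v^{-1}w_0$, not $\geq$ as you wrote. It follows from $v^{-1}\leq v'^{-1}$ together with right multiplication by $w_0$ reversing Bruhat order, and it is this direction that lets the subword property produce $\word{j}_2$ inside $\word{i}(v^{-1}w_0)$.
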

\begin{proof}
Let us show the first statement. It suffices to find a subvariety of $\brick(\word{i}(w)\word{i}(v^{-1}w_0))$ that maps onto $\rich{v',w'}$ under the map $\varpi$ and that is $\dil{v,w}$-invariant. Since $w' \leq w$, we can find a reduced word for $w'$ inside $\word{i}(w)$; since $v \leq v'$, we can find a reduced word for $(v')^{-1}w_0$ inside $\word{i}(v^{-1}w_0)$. Concatenating these words, we obtain a subword $\word{j}$ of $\word{i}(w)\word{i}(v^{-1}w_0)$. The variety $\brick(\word{j}) \hookrightarrow \brick(\word{i})$ is $\dil{v,w}$-stable by Corollary \ref{cor:bricks_subwrods_stable}. By construction, $\varpi(\brick(\word{j})) = \rich{v',w'}$. For the second statement, it follows similarly to the proof of Lemma \ref{lem:fixed_points} that every $\dil{v,w}$-invariant point is also $T$-invariant, so every $\dil{v,w}$-invariant point is indeed of the form $v'\borel_{+}$ for some $v' \in [v,w]$. To see that these points are indeed invariant, notice that $\{v'\borel_{+}\} = \rich{v',v'}$, which is $\dil{v,w}$-stable.
\end{proof}

A similar result holds for projections of Richardson varieties to parabolic flag varieties. More precisely, let $\borel_{+} \subseteq \parabolic$ be a parabolic subgroup of $\group$. A subvariety $\Pi \subseteq \group/\parabolic$ is called a \emph{projected Richardson variety} if there exist $v, w \in W$ such that $\Pi$ is the image of the Richardson variety $\rich{v,w}$ under the natural projection $\pi_{\parabolic}: \group/\borel_{+} \to \group/\parabolic$.

Now let $W_{\parabolic} \subseteq W$ be the associated parabolic subgroup. Given $v, w \in W$, following \cite[Section 2]{KLS_projections} we say that \emph{$w$ $\parabolic$-covers $v$}, and write $v \lessdot_{\parabolic} w$, if $w$ covers $v$ in the Bruhat order and $vW_{\parabolic} \neq wW_{\parabolic}$. Moreover, we denote by $\leq_{\parabolic}$ the transitive closure of the $\parabolic$-covering relation. Note that $\leq_{\parabolic}$ coarsens the usual Bruhat order. 

If $\Pi \subseteq \group/\parabolic$ is a projected Richardson variety, then there exist $v, w \in W$ such that $v \leq_{\parabolic} w$ and $\Pi = \pi_{\parabolic}(\rich{v,w})$, cf. \cite[Section 3]{KLS_projections}. In this case, we say that $\rich{v,w}$ is a \emph{Richardson model for} $\Pi$.

\begin{corollary}
    Let $\Pi \subseteq \borel/\parabolic$ be a projected Richardson variety with Richardson model $\rich{v,w}$. Then, there is a faithful $\dil{v, w}$-action on $\Pi$ making the projection $\pi_{\parabolic}: \rich{v,w} \to \Pi$ equivariant. 
\end{corollary}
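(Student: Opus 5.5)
The plan mirrors the proof of Theorem~\ref{thm:action-extends-richardsons}, replacing $\varpi$ by the composite
\[
\pi_{\parabolic}\circ\varpi\colon \brick(\word{i}(w)\word{i}(v^{-1}w_0)) \to \rich{v,w} \to \Pi ,
\]
and then using Blanchard's lemma once more.

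First, Theorem~\ref{thm:action-extends-richardsons} already gives a $\dil{v,w}$-action on $\rich{v,w}$ for which $\varpi$ is equivariant. To push this action down to $\Pi$, I need the structure sheaf condition for $\pi_{\parabolic}|_{\rich{v,w}}\colon \rich{v,w}\to \Pi$. Since $v\leq_{\parabolic} w$, the results of \cite[Section 3]{KLS_projections} show the following:
\begin{itemize}
\item $\pi_{\parabolic}$ restricts to an isomorphism $\orich{v,w}\to \pi_{\parabolic}(\orich{v,w})$, which is open dense in $\Pi$, so the map is birational;
\item $\Pi$ is normal, indeed projected Richardson varieties are normal and Cohen--Macaulay.
\end{itemize}
A proper birational morphism onto a normal variety with connected fibers satisfies $(\pi_{\parabolic})_\ast\mathcal{O}_{\rich{v,w}}=\mathcal{O}_\Pi$. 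Alternatively, I can use \cite[Theorem A.3]{KLS_projections}-type statements directly for the composite from the brick variety. Either way, $(\pi_{\parabolic}\circ\varpi)_\ast\mathcal{O}=\mathcal{O}_\Pi$ holds. Blanchard's lemma \cite[Theorem 7.2.1]{Brion_structure} then produces a unique action of the connected group $\dil{v,w}$ on $\Pi$ making $\pi_{\parabolic}\circ\varpi$, and hence $\pi_{\parabolic}$, equivariant.

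Faithfulness comes from the open pieces. The action on $\orich{v,w}\cong X(\word{i}(w)\word{i}(v^{-1}w_0))$ is faithful, because cluster dilations act faithfully on the coordinate ring. Since $\pi_{\parabolic}$ restricts to an equivariant isomorphism from $\orich{v,w}$ onto its image in $\Pi$, any element acting trivially on $\Pi$ acts trivially on $\orich{v,w}$, so it is the identity.

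The main obstacle is justifying the hypotheses carefully, namely the isomorphism of $\orich{v,w}$ onto its image and the identity $\pi_\ast\mathcal{O}=\mathcal{O}$, which uses normality of $\Pi$ and connectedness of the fibers. My first attempt would be to cite \cite[Theorem 3.3 and Proposition 3.6]{KLS_projections}, which give the isomorphism $\orich{v,w}\cong\pi_{\parabolic}(\orich{v,w})$ for $v\leq_{\parabolic}w$ and the normality, and then apply Zariski's main theorem.
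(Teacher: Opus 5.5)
Your proposal is correct and follows the paper's proof: both push the $\dil{v,w}$-action on $\rich{v,w}$ from Theorem~\ref{thm:action-extends-richardsons} down to $\Pi$ by Blanchard's lemma. The only difference in the main step is the source of the hypothesis $(\pi_{\parabolic})_\ast\mathcal{O}_{\rich{v,w}}=\mathcal{O}_\Pi$: the paper takes it directly from \cite[Theorem 4.5]{KLS_projections}, whereas you derive it from birationality for $v\leq_{\parabolic} w$ and normality of $\Pi$ via Zariski's main theorem. In that argument connectedness of fibers is a consequence rather than a needed hypothesis, and the detour through the brick variety is unnecessary. Your explicit faithfulness argument, via injectivity of $\pi_{\parabolic}$ on the $\dil{v,w}$-stable set $\orich{v,w}$, is a welcome addition, since the paper leaves this point implicit.
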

\begin{proof}
    By \cite[Theorem 4.5]{KLS_projections}\footnote{See also \cite[Theorem 1.1]{BilleyCoskun} for a closely related statement with a very different proof.}, Blanchard's lemma \cite[Theorem 7.2.1]{Brion_structure} applies to the projection $\pi_{\parabolic}: \rich{v,w} \to \Pi$.
\end{proof}

\begin{remark}
Since the dimension of any compactification of $\orich{v,w}$ equals  $\ell(w) - \ell(v)$, we proved that all Richardson and projected Richardson varieties are $\dil{v, w}$-varieties of complexity
\[
\ell(w) - \ell(v) - d_{v,w}.
\]
\end{remark}

\subsection{Toric Richardson varieties} In this section, we extend \cite[Theorem 1.2]{GKSB} beyond type $A$, as follows.

\begin{theorem}\label{thm:toric-richardson-variety}
Let $v \leq w$ in Bruhat order. The following are equivalent.
\begin{itemize}
    \item[(a)] The open Richardson variety $\orich{v,w}$ is a torus.
    \item[(b)] The closed Richardson variety $\rich{v,w}$ is toric.
    \item[(c)] The Bruhat interval does not contain a subinterval isomorphic to $S_3$.
    \item[(d)] The Bruhat interval $[v,w]$ is a lattice.
    \item[(e)] The interval poset $\mathrm{Int}_{v,w} := \{[v',w'] \mid [v', w'] \subseteq [v,w]\}\cup\{\emptyset\}$, ordered by inclusion, is a lattice.
    \item[(f)] The Bruhat interval $[v,w]$ is the face lattice of a convex polytope.
    \item[(g)] The interval poset $\mathrm{Int}_{v,w}$ is the face lattice of a convex polytope.
    \item[(h)] Some (equivalently, each) word of the form $\word{i}(w)\word{i}(v^{-1}w_0)$ is double root free. 
    \item[(i)] The Kazhdan-Lusztig $d$-invariant is $d_{v,w} = \ell(w) - \ell(v)$.
\end{itemize}
\end{theorem}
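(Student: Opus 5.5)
The plan is to organise the nine conditions around the word $\word{i} := \word{i}(w)\word{i}(v^{-1}w_0)$, for which $\dem(\word{i}) = w_0$ and $X(\word{i}) \cong \orich{v,w}$ by Proposition~\ref{prop:KL-d-invariant}. We then establish three clusters of equivalences. The first cluster, (a)$\Leftrightarrow$(h)$\Leftrightarrow$(i), is formal. By Theorem~\ref{thm:braid-is-torus}, $X(\word{i})$ is a torus if and only if $\word{i}$ is double root free. Double root freeness is invariant under braid moves by Corollary~\ref{cor:drf_braid_invariant}, which gives the ``some/each'' clause in (h). For (i), note that $\dim \orich{v,w} = \ell(w)-\ell(v)$ equals the number of cluster variables in a seed, while $d_{v,w}$ is the number of frozen ones (Proposition~\ref{prop:KL-d-invariant}). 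Hence (i) says there are no mutable variables, which is (a) by Theorem~\ref{thm:braid-is-torus}(a)$\Leftrightarrow$(b).

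The second cluster is (a)$\Leftrightarrow$(b). For (a)$\Rightarrow$(b), the $\dil{v,w}$-action extends to $\rich{v,w}$ by Theorem~\ref{thm:action-extends-richardsons}. It is faithful, and $\rank \dil{v,w} = d_{v,w} = \dim \rich{v,w}$, so $\rich{v,w}$ is toric (it is normal, being a Richardson variety). For (b)$\Rightarrow$(a), the plan is to use the fixed points. A toric variety with torus $\mathsf T$ has finitely many orbits, so the $T$-action factors through $\mathsf T$ up to a choice. It is more robust, however, to go through (c). If $\rich{v,w}$ is toric, then every $T$-stable closed subvariety $\rich{v',w'}$ with $\ell(w')-\ell(v')=2$ is a $T$-invariant surface. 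We would argue that such surfaces are $\mathsf{T}$-orbit closures (Richardson varieties are the $T$-invariant strata determined by fixed points), so they are toric surfaces whose $T$-fixed points $[v',w']$ number at least $3$. An $S_3$-interval has exactly $4$ elements and $2$ curves through each extremal point, forming a bigon. Its fixed-point graph is therefore not the boundary graph of a polygon, which gives the contradiction.

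The third cluster is the combinatorial one, (c)$\Leftrightarrow$(d)$\Leftrightarrow$(e)$\Leftrightarrow$(f)$\Leftrightarrow$(g), linked to (a)/(b). For (a)$\Rightarrow$(f),(g), we use the toric structure. The $T$-invariant (equivalently $\dil{v,w}$-invariant) orbit closures of $\rich{v,w}$ are the $\rich{v',w'}$, and $\orich{v',w'}$ is a single orbit by the Richardson analogue of Corollary~\ref{cor:subwords_tori_orbits}. The argument is that each open Richardson is a torus when its subword is a subword of a double root free word, via Proposition~\ref{prop:no mutables in subwords}. The orbit poset of a projective toric variety is the face lattice of its moment polytope, which gives (g). The fixed-point set is then $[v,w]$ (Proposition~\ref{prop:fixed-points-richardsons}), and the vertices and faces give (f) as a sublattice statement. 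More precisely, (f) should follow from the fact that faces correspond to intervals and that intervals $[v,v']$ are exactly the faces containing the vertex $v$. Next, (f)$\Rightarrow$(d) and (g)$\Rightarrow$(e) are immediate. Then (d)$\Rightarrow$(c) and (e)$\Rightarrow$(c) hold because an $S_3$ interval is not a lattice: its two atoms have two upper bounds of the same rank, and a subinterval that fails to be a lattice forces the whole interval, or the interval poset, to fail as well.

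The main obstacle is (c)$\Rightarrow$(a), a purely Coxeter-theoretic statement. The plan is to use (g) of Theorem~\ref{thm:braid-is-torus}. Suppose $\word{i}$ has a non-reduced subword $\word{j}$ with $\pi(\word{j})=w_0$. Reduce to length $\ell(w_0)+2$ as in the proof of Proposition~\ref{prop:no_nonreduced_double_root_free}, giving $\word{i}(u)\,i\,\word{i}(v'')\,j\,\word{i}(w'')$ with $v''(\alpha_j)=\pm\alpha_i$. Since $\word{i}(w)$ and $\word{i}(v^{-1}w_0)$ are reduced, the two extra letters cannot both lie in the same factor in a way compatible with reducedness, so one lies in each part. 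Translating through the isomorphism, this yields $v\le v_1\le w_1\le w$ with $\ell(w_1)-\ell(v_1)=2$, where $w_1 = v_1 s s'$ arises from a repeated reflection, so that $[v_1,w_1]$ has exactly two atoms, i.e.\ is isomorphic to $S_3$. I expect this translation, identifying the subword data with a length-two Bruhat subinterval containing only two middle elements, to require careful bookkeeping of reflections. If it stalls, I would instead cite Dyer's result \cite{Dyer_unpublished} for (c)$\Leftrightarrow$(d)$\Leftrightarrow$(f) and reduce to the fixed-point and curve count via the moment polytope.
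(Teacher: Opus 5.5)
Your handling of (a)$\Leftrightarrow$(h)$\Leftrightarrow$(i), (a)$\Rightarrow$(b), (a)$\Rightarrow$(g)$\Rightarrow$(f) and (f),(g)$\Rightarrow$(d),(e)$\Rightarrow$(c) agrees with the paper. The two implications that close the cycle, (b)$\Rightarrow$(c) and (c)$\Rightarrow$(a), do not go through. Both rest on a misreading of ``$S_3$''. It is the Bruhat order of the symmetric group $S_3$: a six-element interval of length $3$ with two atoms and two coatoms, i.e. the face poset of a bigon counting $\emptyset$ and the $2$-cell. This is the object you correctly use in (d)$\Rightarrow$(c). By contrast, every Bruhat interval of length $2$ has exactly two atoms and is a diamond, hence a lattice (the face lattice of a segment). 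Its open Richardson variety is always a $2$-torus: $\word{i}(w_1)\word{i}(v_1^{-1}w_0)$ has length $\ell(w_0)+2$ but product $w_1v_1^{-1}w_0\neq w_0$, so it contains no non-reduced subword for $w_0$. So exhibiting $[v_1,w_1]$ of length $2$ with ``exactly two atoms'' contradicts nothing, and the bigon obstruction you look for on toric surfaces never arises. In (b)$\Rightarrow$(c) you also assume, without justification, that the $\rich{v',w'}$ are orbit closures for an arbitrary torus $\mathsf T$ making $\rich{v,w}$ toric.

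The translation step in (c)$\Rightarrow$(a) also fails on its own terms. Suppose $\word{j}=\word{j}_1\word{j}_2$, with $\word{j}_1\subseteq\word{i}(w)$ and $\word{j}_2\subseteq\word{i}(v^{-1}w_0)$, satisfies $\pi(\word{j})=w_0$ and $\ell(\word{j})=\ell(w_0)+2$. Then $\word{j}_1$ and $\word{j}_2$ cannot both be reduced: if $\word{j}_1$ is reduced, then $\ell(\pi(\word{j}_2))=\ell(w_0)-\ell(\word{j}_1)<\ell(\word{j}_2)$. So such a subword never corresponds to a subinterval $[v_1,w_1]$ with $\ell(w_1)-\ell(v_1)=2$. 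Your claim that the two extra letters must lie in different factors is also false. Take $v=e$, $w=w_0$ in type $A_2$ and the word $121121$: positions $1,3,4,5,6$ give $11\cdot 121$, with both extra letters in $\word{i}(w)$, while the actual $S_3$ is the whole interval $[e,w_0]$. Your fallback (citing Dyer for (c)$\Leftrightarrow$(d)$\Leftrightarrow$(f)) stays on the combinatorial side and never reaches (a) or (b). What is missing is a genuine argument that the absence of $S_3$ subintervals forces $\orich{v,w}$ to be a torus, and that toricity of $\rich{v,w}$ excludes such subintervals. The paper does not extract these from word combinatorics; it imports (b)$\Rightarrow$(c)$\Rightarrow$(a) from \cite[Lemma 3.5, Theorem 3.6]{GKSB}, whose arguments carry over beyond type $A$.
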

\begin{proof}
(a) $\Longrightarrow$ (b) follows from Theorem \ref{thm:action-extends-richardsons}, while (b) $\Longrightarrow$ (c) $\Longrightarrow$ (a) follow exactly as in \cite[Lemma 3.5, Theorem 3.6]{GKSB}. The equivalence (c) $\Longrightarrow$ (d) $\Longrightarrow$ (e) $\Longrightarrow$ (c) also follows as in \cite{GKSB}, except that we need to show first that the face lattice of the moment polytope of the toric Richardson variety $\rich{v,w}$ is $\mathrm{Int}_{v,w}$. For every pair of elements $v \leq v' \leq w' \leq w$, the closed Richardson variety $\rich{v', w'} \subseteq \rich{v,w}$ is a $\dil{v,w}$-invariant set of $\rich{v,w}$, see \cite[Theorem 3.1]{Can-Toric}, so it follows that each open Richardson variety $\orich{v',w'}$ is also $\dil{v,w}$-stable. We need to show that it is, in fact, a single $\dil{v,w}$-orbit. For this, it is enough to find a $\dil{v,w}$-orbit in $\brick(\word{i}(w)\word{i}(v^{-1}w_0))$ that maps onto $\orich{v', w'}$. Since $w' \leq w$, we can find a reduced expression for $w'$ as a subword of $\word{w}$ and, since $v \leq v'$, we can find a reduced expression for $(v')^{-1}w_0$ as a subword of $\word{i}(v^{-1}w_0)$. We concatenate these two subwords to find a subword $\word{j} \subseteq \word{i}(w)\word{i}(v^{-1}w_0)$ such that $X(\word{j})$ maps onto $\orich{v',w'}$. Since $X(\word{j})$ is a single $\dil{v,w}$-orbit, the result follows. Note that this also proves (a) $\Longrightarrow$ (g), while (g) $\Longrightarrow$ (c) follows because the interval poset of $S_3$ is not a lattice. The equivalence of (a) and (h) follows directly from Theorem \ref{thm:braid-is-torus}. 
It remains to show the equivalence of (f) with the other conditions. Note that the Bruhat interval $[v,w]$ is isomorphic to the interval $[[v,v], [v,w]]$ in $\mathrm{Int}_{v,w}$, so (g) $\Longrightarrow$ (f). On the other hand, if the Bruhat interval $[v,w]$ is the face lattice of a polytope then it cannot have subintervals isomorphic to $S_3$, as $S_3$ is not a lattice. So (f) $\Longrightarrow$ (c) and this finishes the proof. Finally, $\dim(\orich{v,w}) = \ell(w) - \ell(v)$, and the number of frozen variables is exactly $d_{v,w}$. So the equivalence of (a) and (i) is a special case of the equivalence between (a) and (b) in Theorem \ref{thm:braid-is-torus} above.
\end{proof}

\begin{remark}
The equivalence (a) $\Longleftrightarrow$ (b) can be alternatively deduced from \cite[Theorem 1.7]{EFMS} combined with \cite[Theorem 1.10]{EFM} in a similar way to the argument for brick varieties outlined in Remark~\ref{rem:brick_toric_alternative_proof}.
\end{remark}

\begin{corollary}
The Bruhat interval $[v,w]$ is the face lattice of a convex polytope if and only if some (equivalently, each) word of the form $\word{i}(w)\word{i}(v^{-1}w_0)$ is double root free. In this case, such a polytope can be given explicitly as a vertex figure of the moment polytope (with respect to the $\dil{v,w}$-action) of $\rich{v,w}$.
\end{corollary}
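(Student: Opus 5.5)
The first equivalence is exactly (f) $\Longleftrightarrow$ (h) of Theorem~\ref{thm:toric-richardson-variety}, so the plan is to quote that theorem and then spend the real work on the explicit description of the polytope. Assume $\word{i}(w)\word{i}(v^{-1}w_0)$ is double root free. By (h) $\Rightarrow$ (a), (b) the open Richardson variety $\orich{v,w}$ is a torus and $\rich{v,w}$ is $\dil{v,w}$-toric. Choose a $\dil{v,w}$-linearized ample line bundle, which exists after taking a tensor power as in the discussion before Corollary~\ref{cor:proj_moment_to_brick_polytopes}, and let $P$ be the resulting moment polytope.

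The key steps, in order, are as follows.

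\begin{enumerate}
\item By the orbit--face correspondence for projective toric varieties, the face lattice of $P$ is isomorphic to the poset of torus orbit closures, together with $\emptyset$, ordered by inclusion.
\item The proof of Theorem~\ref{thm:toric-richardson-variety} shows that the $\dil{v,w}$-orbits are exactly the open Richardson varieties $\orich{v',w'}$, and that their closures are the $\rich{v',w'}$. Hence the face lattice of $P$ is $\mathrm{Int}_{v,w}$.
\item Under this identification, the vertices of $P$ are the fixed points $v'\borel_{+}$ of Proposition~\ref{prop:fixed-points-richardsons}, i.e.\ the singleton intervals $[v',v']$.
\item The faces of $P$ containing the vertex $[v,v]$ are the intervals $[v,w']$ with $v\le w'\le w$. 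Ordered by inclusion, these form a poset isomorphic to $[v,w]$.
\item For a convex polytope, the poset of faces containing a fixed vertex $p$ is isomorphic to the face lattice of the vertex figure $P/p$, via $F\mapsto F\cap H$ for a hyperplane $H$ cutting off $p$. This correspondence shifts the bottom element: the face $\{p\}$ goes to the empty face.
\item It follows that $[v,w]$ is the face lattice of the vertex figure of $P$ at $v\borel_{+}$. This polytope is explicit, since the vertices of $P$ are the moment images of the fixed points.
\end{enumerate}

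There is one indexing subtlety in steps 4--6. The bottom element $v$ of the Bruhat interval corresponds to the empty face of the vertex figure, and $w$ corresponds to the whole vertex figure. This matches the convention that a face lattice includes both $\emptyset$ and the polytope itself, so no genuine issue arises.

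The main point to check is step 2, namely that orbit closures coincide with Richardson closures in the possibly singular $\rich{v,w}$. That was already established in the proof of the theorem, so it can be cited. Because $\rich{v,w}$ may be non-normal, I would also verify that the orbit--face correspondence is legitimate. Here I would first invoke the normality of Richardson varieties, which is standard, so that $\rich{v,w}$ is an honest projective toric variety.
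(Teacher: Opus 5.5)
Your proposal is correct and follows the paper's own route. The equivalence is (f)$\Leftrightarrow$(h) of Theorem~\ref{thm:toric-richardson-variety}. The explicit polytope comes from the proof of that theorem, which identifies the face lattice of the $\dil{v,w}$-moment polytope with $\mathrm{Int}_{v,w}$, together with $[v,w]\cong[[v,v],[v,w]]$; this is the upper interval above the vertex $v\borel_{+}$, i.e.\ the face lattice of its vertex figure. Your extra appeal to normality of Richardson varieties is harmless but unnecessary, since Atiyah's orbit--face correspondence for torus orbit closures in projective space does not require normality.
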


We note that, given a pair $v \leq w$, it is significantly easier and faster to verify whether some (and by Corollary~\ref{cor:drf_braid_invariant}, equivalently, each) word of the form $\word{i}(w)\word{i}(v^{-1}w_0)$ is double root free or not than to check if the Bruhat interval does (not) contain a subinterval isomorphic to $S_3$. The full algorithm requires picking a pair of reduced expressions of $w$ and $v^{-1}w_0$, checking on all $\binom{\ell(w) - \ell(v) + \ell(w_0)}{\ell(w_0)}$ subwords of length $\ell(w_0)$ of the chosen word of the form $\word{i}(w)\word{i}(v^{-1}w_0)$ whether they are reduced expressions of $w_0$, and checking if there are repetitions in the corresponding root configurations.
In fact, a check of the double root free property is incorporated in Sage as a single command. Thanks to Proposition~\ref{prop:no_nonreduced_double_root_free} and its proof, one can alternatively check whether the word $\word{i}(w)\word{i}(v^{-1}w_0)$ contains any subwords $\word{j}$ of length $\ell(w_0) +2$ with $\pi(\word{j}) = w_0$. 

For $S_3$ itself, one possible word for $v = e, w_0 = s_1 s_2 s_1$ is $121121$. The first three positions form a facet of the subword complex $\Delta(121121, w_0)$, and the roots for the first and third indices both equal $\alpha_1$. This illustrates the failure of the double root free property for this word. It is also immediate to check directly that the cluster structure on $\C[X(121121)]$ has mutable variables: we can start a weave with a trivalent vertex $121121 \to 12121$, and this vertex is clearly mutable.

All of this shows that having an equivalence (2) $\Longleftrightarrow$ (8) in Theorem \ref{thm:toric-richardson-variety} should prove very useful in the search and study of explicit examples of toric closed Richardson varieties.

\bigskip

We finish this section with a couple of observations.

\begin{remark}
Fix a word $\word{j}_w \word{j}_{v^{-1}w_0}$ of the form $\word{i}(w)\word{i}(v^{-1}w_0)$.
The map $\varpi: \brick(\word{j}_w \word{j}_{v^{-1}w_0}) \to \rich{v,w}$ is a $\dil{v,w}$-equivariant resolution of singularities, and we now know that in case $\orich{v,w}$ is a torus, both the domain and codomain of $\varpi$ are $\dil{v,w}$-toric varieties. This implies that, in this case, the normal fan of the moment polytope of $ \brick(\word{j}_w \word{j}_{v^{-1}w_0})$ is simplicial and refines the  normal fan of the moment polytope of $\rich{v,w}$.
The cones of these fans bijectively correspond to faces of the respective polytopes, and we can see what happens on the level of face lattices. It is straightforward to deduce from \cite[Lemma 2.1]{splicing2} that faces of the moment polytope of $ \brick(\word{j}_w \word{j}_{v^{-1}w_0})$, equivalently, faces of the dual subword complex of $(\word{j}_w \word{j}_{v^{-1}w_0}, w_0)$, are in bijection with pairs (a subword of $\word{j}_w$ with Demazure product $w'$, a subword of $\word{j}_{v^{-1}w_0}$ with Demazure product $((v')^{-1} w_0)$), with $v \leq v' \leq w' \leq w$. At the same time, the faces of the moment polytope $\rich{v,w}$ are in bijection simply with pairs $(v',w')$, with $v \leq v' \leq w' \leq w$.
We see that the first set of pairs of course refines the second one. The reader should keep in mind that the first set, just as the brick variety itself, does depend nontrivially on the choice of word  $\word{j}_w \word{j}_{v^{-1}w_0}$ of the form $\word{i}(w)\word{i}(v^{-1}w_0)$ and not just on the pair $(v, w)$, cf. Examples~\ref{ex:Coxeter} and \ref{ex:4-crown}.
\end{remark}

\begin{remark}
The moment polytope of $\rich{v,w}$ for the $T$-action is called the \emph{Bruhat interval polytope} $Q_{v,w}$ \cite{KW, TW}. Its face lattice in general does not coincide with the interval poset $\mathrm{Int}_{v,w}$. We note that since the actions of $T$ and $\dil{v,w}$ are compatible, in the $\dil{v,w}$-toric case there exists a polytope with the face lattice given by  $\mathrm{Int}_{v,w}$ which projects onto $Q_{v,w}$: namely, the moment polytope of $\rich{v,w}$ for the action of $\dil{v,w}$, up to possible dilation (as in Corollary~\ref{cor:proj_moment_to_brick_polytopes}). We also note that in recent work \cite{GU}, a different projective embedding of brick varieties than that of Escobar is considered, with a different moment polytope realizing the same normal fan, and such that in the case of root independent words of the form of the form $\word{i}(w)\word{i}(v^{-1}w_0)$, this moment polytope contains $Q_{v,w}$ as a Minkowski summand.
\end{remark}

\section{Examples}
\label{sect:examples}

\begin{example}
\label{ex:Coxeter}
Consider the lower interval $[e, w]$, for some $w \leq c$, where $c$ is some Coxeter element in $W$. In this case, the variety $\rich{e,w}$ is smooth and $T$-toric, and the map $T \to \dil{e,w}$ is  surjective. Thus, $Q_{e,w}$ coincides with the moment polytope of $\rich{e,w}$ for the $\dil{e,w}$-action. The interval $[e, w]$ is Boolean, and so we see that the $Q_{e,w}$, having $\mathrm{Int}_{e,w}$ as its face lattice, is combinatorially isomorphic to an $\ell(w)$-dimensional cube. Any vertex figure of a combinatorial cube is a combinatorial simplex, in particular, $[e,w]$ is the face lattice of such a combinatorial simplex. 

Each of the braid words $\word{j}_w \word{j}_{w_0}$ of the form $\word{i}(w)\word{i}(w_0)$ is double root free. In fact, these are prototypical examples of root indepentent words. 

    Consider $w = c$. In this case, the map $T \to \dil{e,w}$ is an isomorphism. For each braid word $\word{j}_c \word{j}_{w_0}$ of the form $\word{i}(c)\word{i}(w_0)$, the map $\varpi: \brick(\word{j}_c \word{j}_{w_0}) \to \rich{v,w}$ is a $T \cong \dil{e,c}$-equivariant desingularization, and the normal fan of the moment polytope $\brick(\word{j}_c \word{j}_{w_0})$ refines that for $\rich{v,w}$. Depending on the choice of the word, we may obtain different varieties and combinatorially different polytopes and normal fans. 

As one extremum, we consider the reversed word of $\word{j}_c$ and denote it by $\word{j}_{c^{-1}}$ (since it is of the form  $\word{i}(c^{-1})$). Denote by $\word{j}_{w_0(c^{-1})}$ the lexicographically first reduced subword for $w_0$ in the infinite word $\word{j}_{c^{-1}}\word{j}_{c^{-1}}\word{j}_{c^{-1}}\ldots$; it is called the \emph{$\word{j}_{c^{-1}}$-sorting word for $w_0$}. The subword complex for $\word{j}_c \word{j}_{w_0(c^{-1})}$ is isomorphic to the boundary complex of the cross-polytope, and the moment polytope of $\brick(\word{j}_c \word{j}_{w_0(c^{-1})})$ is a combinatorial cube. We see that for this choice of word, the map $\varpi$ is in fact an isomorphism and the polytope  $\brick(\word{j}_c \word{j}_{w_0(c^{-1})})$ coincides with $Q_{e,c}$.

If we now consider the \emph{$\word{j}_{c}$-sorting word for $w_0$} denoted $\word{j}_{w_0(c)}$, i.e. the lexicographically first reduced subword for $w_0$ in the infinite word $\word{j}_{c}\word{j}_{c}\word{j}_{c}\ldots$, then the subword complex is isomorphic to the \emph{$c$-cluster complex of $W$}, and the moment polytope of $\brick(\word{j}_c \word{j}_{w_0(c)})$ is (up to translation) the \emph{$c$-associahedron of $W$}. In our crystallographic case, these are the  cluster complex and generalized associahedron of the cluster algebra of the orientation of $\dynkin$ induced by $c$. The normal fan of this polytope is the $g$-vector fan of this cluster algebra. We refer the reader to \cite{pilaud_stump_15, JLS, PSZ} for details and further references.

Between these two extremal cases, we have moment polytopes for other words of the form $\word{j}_c\word{i}(w_0)$. For each of them, the normal fan refines the normal fan of $Q_{e,w}$, i.e. of the moment polytope of $\brick(\word{j}_c \word{j}_{w_0(c^{-1})})$. Each braid move between two such words inside their $\word{i}(w_0)$-parts corresponds, in one direction of the move,  either to an isomorphism or to a sequence of edge subdivisions of spherical subword complexes. On the level of normal fans of moment polytopes, this corresponds to a sequence of simplicial subdivisions of a single cone induced by adding new rays. The partial order on a quotient of the set of commutation classes of words of the form $\word{i}(w_0)$, where cover relations are braid moves inducing subdivisions, can be seen as a 2-dimensional version of the Cambrian order of Reading and is isomorphic to some of the orders introduced in \cite{GW1, GW2}. Some parts of this were sketched in the short note \cite{G_subword_1} and details and corrections should appear in \cite{GW_upcoming}.
\end{example}

\begin{example}
\label{ex:4-crown}
Consider $W = A_3$ and let $v = s_1s_3, w = s_1s_2s_3s_2s_1 = s_1s_3s_2s_3s_1 = s_1s_3s_2s_1s_3$. The interval $[v,w]$ is a lattice, but it is not Boolean: it is a so-called 4-crown. The words $\word{i} = (12321)(2132)$ and $\word{i'} = (13213)(2132)$ of the form 
$\word{i}(w)\word{i}(v^{-1}w_0)$ are root independent. The Richardson variety $\rich{v,w}$ is therefore $T$-toric, but it is not smooth. The corresponding moment polytope is drawn in \cite[Figure 3]{GKSB}. One can see that it is not simple: while vertices 1 -- 8 are incident to 3 edges, the vertices 0 and 9, corresponding to intervals $[v,v]$ and $[w,w]$ in the interval poset $\mathrm{Int}_{v,w}$, are incident to 4 edges each. It has 8 facets (3 quadrilateral facets containing the vertex 4 are not visible). 

The subword complex $\Delta(\word{i}, w_0)$ has 8 vertices: the complement to the 3rd position in $\word{i}$ does not contain a reduced expression of $w_0$, the complements to all the other positions do. The brick polytope for $\word{i}$, which is the moment polytope of $\brick(\word{i})$ for $T \cong \dil{v,w}$-action, has therefore 8 facets, and its normal fan has 8 rays, just like the normal fan of the moment polytope of $\rich{v,w}$. Thus, in this case, the refinement of the fans is a simplicial refinement without adding new rays. On the level of polytopes, the moment polytope of $\brick(\word{i})$ can be obtained from the moment polytope of 
$\rich{v,w}$ by translating some facet hyperplanes parallel to themselves in such a way that each of the vertices 0 and 9 gets ``split'' into two trivalent vertices $0', 0''$ and $9', 9''$, respectively, with $0'$ and $0''$ being connected by an edge and $9'$ and $9''$ being connected by an edge. 

The word $\word{i'}$ is of the form $\brick(\word{j}_c \word{j}_{w_0(c)})$ for $c = s_1 s_3 s_2$, so the subword complex is the cluster complex for the type $A_3$ quiver with bipartite orientation $1 \to 2 \leftarrow 3$ and the moment polytope of $\brick(\word{i'})$ is the generalized associahedron for this quiver. This polytope has 9 facets, so in this case the refinement of the normal fan of the moment polytope of $\rich{v,w}$ adds one new ray. 
\end{example}

\begin{example}\label{ex:hypercubes}
An infinite family of examples comes from the \lq\lq big Bruhat hypercubes\rq\rq \, of \cite{hypercubes}. Fixing $n$, \cite{hypercubes} constructs a family of permutations $x_n, y_n \in S_{2^{n}}$. The Bruhat interval $[x_n, y_n]$ is Boolean, so $\rich{x_n, y_n}$ is $\dil{x_n, y_n}$-toric and its moment polytope is combinatorially a hypercube. Note that the rank of $\dil{x_n, y_n}$ is $n2^{n-1}$, which is larger than $2^{n}-1$ for $n \geq 2$ (and in fact $\rank(\dil{x_n, y_n})/\rank(T) \sim O(\log n)$) so $\rich{x_n, y_n}$ is not $T$-toric for $n \geq 2$.  Note that this example is also considered in \cite[Section 5.3]{GKSB}, where the authors construct a sequence of plabic graphs associated to the Richardson variety $\rich{x_n, y_n}$. 

The permutations $x_n, y_n$ are both involutions and $x_ny_n = y_nx_n = \wo$, cf. \cite[Lemma 2.8]{hypercubes}, so that $y_n = x_n^{-1}\wo$. Thus, we can take two (not necessarily distinct) reduced words for $y_n$: $\word{i}(y_n), \word{j}(y_n)$, and we obtain that the word $\word{i}(y_n)\word{j}(y_n)$ is double-root free and the brick variety $\brick(\word{i}(y_n)\word{j}(y_n))$ is $\dil{\word{i}(y_n)\word{j}(y_n)}$-toric.
\end{example}

\begin{example}
\label{ex:even_odd}
Another infinite family of toric Richardson varieties is discussed in \cite[Section 5.2]{GKSB}, see also \cite[Remark 5.18]{splicing2}. Here, we take $n = 2k$ even, $v_k = s_2s_4\cdots s_{n-2}$ and $w_k = (1n) = s_1\cdots s_{n-2}s_{n-1}s_{n-2}\cdots s_{1}$. In \cite[Remark 5.18]{splicing2} it is verified that $\orich{v, w}$ is a torus of rank $(n-1)+ (k-1)$, so the Richardson variety is $\dil{v,w}$-toric but not $T$-toric for $k \geq 2$.  Relating to Example \ref{ex:hypercubes}, we have that $x_2 = v_2, y_2 = w_2$. For $k \geq 3$, the Bruhat interval is a non-Boolean lattice, so the Richardson variety $\rich{v_k, w_k}$ is toric but not smooth. 

Note that $\word{i}(w_k) = (1,\dots, n-2, n-1, n-2, \dots, 1)$. Let $\mathsf{even} = (2,4,\dots, 2k-2)$ and $\mathsf{odd} = (1, 3, \dots, 2k-1)$. A reduced word for $\wo$ is $(\mathsf{even},\mathsf{odd})^{k}$, so a reduced word for $v_k^{-1}\wo$ is $\mathsf{odd}(\mathsf{even},\mathsf{odd})^{k-1}$. Thus, the subword complex of the word
\[
(1, \dots, n-2, n-1, n-2, \dots, 2, 1, \mathsf{odd}, (\mathsf{even}, \mathsf{odd})^{k-1}) 
\]
is realizable by a polytope. For example, for $k = 3$ we obtain the word
\[
(1,2,3,4,5,4,3,2,1,1,3,5,2,4,1,3,5,2,4,1,3,5).
\]
\end{example}

\begin{example}
\label{ex:duplicated}
The first family of toric brick varieties which do not appear as resolutions of toric Richardson varieties comes from the duplicated words $i_1 i_1 i_2 i_2 \ldots i_\ell i_\ell$, where $i_1 i_2 \ldots i_\ell$ is of the form $\word{i}(w_0)$, or subwords of these where we doubled only some of the letters. If we doubled at most $n$ letters whose roots were linearly independent, the subword complex is root independent and isomorphic to the boundary complex of a cross-polytope, and so it is realized by the polar dual of the brick polytope which is a parallelepiped, cf. \cite[Example 3.9, Example 4.3]{pilaud_stump_15}. If the word $\word{i}$ is obtained by doubling an arbitrary subset of cardinality $k$ of letters in $i_1 i_2 \ldots i_\ell$, the word is no longer necessarily root independent, but it is always double root free, cf. \cite[Section 5.2.2]{PilaudStump-EL} and Example~\ref{ex:112211}. The subword complex is still isomorphic to the boundary complex of the $k$-dimensional cross-polytope, and it is realized by the polar dual of the moment polytope of $\brick(\word{i})$. The latter projects onto the lower-dimensional brick polytope $B(\word{i},w_0)$. 
\end{example}

\begin{example}
In general, there are many toric brick varieties which do not appear as resolutions of toric Richardson varieties. Already for $W = A_2$, further examples are given by the words $122112$ and $211221$.

In $A_3$, root independent words have length at most 9, and one can find many double root free words of length 10, among which all but few are not resolutions of toric Richardson varieties. As a concrete example, consider the word $\word{i} = 1212132211$. It is double root root free, non-Richardson, and does not fit into the setting of Example~\ref{ex:duplicated}. Every reduced expression of $w_0$ inside $\word{i}$ contains position 6 which is essential, 
exactly one of the positions 7 and 8, exactly one of the positions 9 and 10, and exactly three consecutive modulo 5 positions out of indices 1--5. Combinatorially, the 4-dimensional polytope whose polar dual realizes the subword complex $\Delta(\word{i}, w_0)$ is the product of a pentagon (whose dual realizes $\Delta(12121, s_1s_2s_1)$) and a parallelogram (whose dual realizes $\Delta(2211, s_2s_1)$).
\end{example}

To finish, we discuss nice families of varieties which are neither $T$-toric, nor $\dil{\word{i}}$-toric. Our general results still apply, so they admit a faithful action of a large torus whose rank is given by the number of frozen variables.

\begin{example}
\label{ex:schubert_not_coxeter}
Consider the lower interval $[e, w]$, for some $w$ whose reduced expressions contain repeated letters, equivalently, for $w$ which does not fit in the generality of Example~\ref{ex:Coxeter}. In this case, for each word $\word{j}_w \word{j}_{w_0}$ of the form $\word{i}(w)\word{i}(w_0)$, positions in the subword $\word{j}_w$ form a facet of $\Delta(\word{j}_w \word{j}_{w_0}, w_0)$ and roots in its root configuration are just the simple roots corresponding to the letters in this subword.  Since $\word{j}_w$ has repeated letters, this root configuration has repetitions, and so $\word{j}_w \word{j}_{w_0}$ is not double root free. Thus, neither the brick variety $\brick(\word{j}_w \word{j}_{w_0})$, nor the Richardson variety $\rich{e,w}$ are toric.

In fact, the open Richardson variety $\orich{e,w} \cong X(\word{j}_{w}\word{j}_{w_0})$ is an example of a \emph{double Bott-Samelson variety} \cite{SW}, see \cite[Lemma 3.16]{CGGLSS}. Similarly to the type $A$ case proved in \cite[Lemma 4.41]{CGSS}, the map $T \to \dil{e,w}$ is surjective, so that $\orich{e,w}$ is a torus if and only if $\rich{e,w}$ is toric, if and only if $\rich{e,w}$ is in fact toric with respect to the usual $T$-action. Moreover, the variety $X(\word{j}_w\word{j}_{w_0})$ has only frozen variables if and only if $\word{j}_w$ does not have repeated letters, cf. \cite[Proposition 4.44]{CGGLSS}. So $\rich{e,w}$ is toric if and only if $w \leq c$ for some Coxeter element $c$ as in Example~\ref{ex:Coxeter}, and we recover the main result of \cite{Karuppuchamy}.  
\end{example}

In Examples~\ref{ex:Coxeter} and \ref{ex:schubert_not_coxeter} we see that Richardson varieties of the form $\rich{e,w}$, which are in fact the same as \emph{Schubert varieties} $X_w$ in $\group/\borel_+$, are toric if and only if they are T-toric.
Examples~\ref{ex:hypercubes} and \ref{ex:even_odd} illustrate that this is not the case for general Richardson varieties.

\begin{example}
The second family of non-toric varieties is defined by slightly modifying Example \ref{ex:hypercubes}, as in \cite[Section 3.2]{hypercubes}.
Namely, consider $p \geq 2$ and $n \geq 1$. Then we have permutations $x_n^{(p)}, y_n^{(p)} \in S_{p^n}$ such that, as posets
\[
[x_n^{(p)}, y_n^{(p)}] \cong (S_p)^{np^{n-1}}.
\]
Example \ref{ex:hypercubes} is precisely the case $p = 2$. For $p > 2$, the poset $(S_p)^{np^{n-1}}$ does contain intervals isomorphic to $S_3$, so that the open Richardson variety $\orich{x_n^{(p)}, y_n^{(p)}}$ is not a torus and the Richardson variety $\rich{x_n^{(p)}, y_n^{(p)}}$ is not toric. Nevertheless, the $d$-invariant is
\[
d_{x_n^{(p)}, y_n^{(p)}} = (p-1)np^{n-1} > p^{n} - 1 \; \text{if $n \geq 2$},
\]
so that for $n \geq 2$, the variety $\rich{x_n^{(p)}, y_n^{(p)}}$ admits a faithful algebraic action of the torus $\dil{x_n^{(p)}, y_n^{(p)}}$ of rank strictly greater than the rank of the maximal torus in $\SL_{p^n}$. 
\end{example}

\bibliographystyle{alpha}
\bibliography{bibliography}
\end{document}